  \definecolor{Mygrey}{gray}{0.75}
  \definecolor{MyFcja}{gray}{0.3}
\providecommand\@dotsep{5}
\def\listtodoname{List of Todos}
\def\listoftodos{\@starttoc{tdo}\listtodoname}
\numberwithin{equation}{section}
\def\R{\mathbb{R}}
\def\N{\mathbb{N}}
\newcommand{\hs}{\hspace{0.1cm}}
\newcommand{\disp}{\displaystyle}
\newtheorem{theorem}{Theorem}[section]
\newtheorem{proposition}[theorem]{Proposition}
\newtheorem{lemma}[theorem]{Lemma}
\newtheorem{corollary}[theorem]{Corollary}
\newtheorem{claim}[theorem]{Claim}
\newtheorem{remark}[theorem]{Remark}
\title[The method of the energy function and applications]
{The method of the energy function and applications}
\author[C. O. Alves]{Claudianor O. Alves}
\author[T. L. C. Coelho]{Tiago L. C. Coelho}
\author[J. R. Santos Jr.]{Jo\~ao R. Santos J\'unior$^\ast$}
\address[C. O. Alves]
{\newline\indent Unidade Acad\^{e}mica de Matem\'atica - UAMAT
\newline\indent
Universidade Federal de Campina Grande
\newline\indent
CEP: 58109-970, Campina Grande - Pb, Brazil.}
\email{\href{mailto: coalves@mat.ufcg.edu.br}{coalves@mat.ufcg.edu.br}
}
\address[T. L. C. Coelho]{\newline\indent Programa de Doutorado em Matem\'atica
\newline\indent
Instituto de Ci\^{e}ncias Exatas e Naturais
\newline\indent
Universidade Federal do Par\'a
\newline\indent
Avenida Augusto corr\^{e}a 01, 66075-110, Bel\'em, PA, Brazil}
\email{\href{mailto: tiagolcoelhoc@gmail.com}{tiagolcoelhoc@gmail.com}}
\address[J. R. Santos Jr.]{\newline\indent Faculdade de Matem\'atica
\newline\indent
Instituto de Ci\^{e}ncias Exatas e Naturais
\newline\indent
Universidade Federal do Par\'a
\newline\indent
Avenida Augusto corr\^{e}a 01, 66075-110, Bel\'em, PA, Brazil}
\email{\href{mailto: joaojunior@ufpa.br}{joaojunior@ufpa.br}}
\thanks{C. O. Alves was partially supported by CNPq/Brazil 307045/2021-8 and Projeto Universal FAPESQ-PB 3031/2021. J. R. Santos J\'unior is the corresponding author and he was partially supported by CNPq 313766/2021-5, Brazil.}
\subjclass[2010]{ 35J20, 35J25, 35Q74.}
\keywords{Energy function method, mountain pass theorem, nonlinear elliptic equations.}
\begin{document}

\begin{abstract}
In this work, we establish a new method to find critical points of differentiable functionals defined in Banach spaces which belong to a suitable class ($\mathcal{J}$) of functionals. Once given a functional $J$ in the class ($\mathcal{J}$), the central idea of the referred method consists in defining a real function $\zeta$ of a real variable, called {\it energy function}, which is naturally associated to $J$ in the sense that the existence of real critical points for $\zeta$ guarantees the existence of critical points for the functional $J$. As a consequence, we are able to solve some variational elliptic problems, whose associated energy functional belongs to ($\mathcal{J}$) and provide a version of the mountain pass theorem for functionals in the class ($\mathcal{J}$) that allows us to obtain mountain pass solutions without the so-called Ambrosetti-Rabinowitz condition. \end{abstract}

\maketitle

\begin{center}
\begin{minipage}{12cm}
\tableofcontents
\end{minipage}
\end{center}

\bigskip

%%%%%%%%%%%%%%%%%%%%%%%%%%%%%%%%%%%%%%%%%%%%%%%%%%%%%%%%%%%%%%%%%%%%%%%%%%%%%%%%%
%%%%%%%%%%%%%%%%%%%%%%%%%%%%%%%%%%%%%%%%%%%%%%%%%%%%%%%%%%%%%%%%%%%%%%%%%%%%%%%%%
%%%%%%%%%%%%%%%%%%%%%%%%%%%%%%%%%%%%%%%%%%%%%%%%%%%%%%%%%%%%%%%%%%%%%%%%%%%%%%%%%

\section{Introduction}\label{se:intro}

Over the years, many methods have been developed to attack different classes of partial differential equations. Even though some methods are renowned for their range of applications, it is of course expected that each one of them has its advantages and disadvantages. In fact, the experience shows us that depending on the level of complexity or specificity of a partial differential equation certain approaches are more appropriated and effective than others. In this context, it is crucial to have a diversified theoretical framework which allows that different techniques complement each other in order to provide a more complete understanding of the problems. 

\medskip

In this work, we have established an alternative method that allows us to complement some classical results of the literature, involving differential equations. In fact, this referred method, which we will call {\it Method of the Energy Function} (MEF), has proved to be effective to treat some relevant classes of variational elliptic partial differential problems, for which, some progress is made in the present article, see Section \ref{se:app}. For instance, by using the energy function approach, we obtain a version of the Mountain Pass Theorem suitable for problems that do not require the Ambrosetti-Rabinowitz growth condition, see Theorem \ref{TPM}.  

\medskip

In order to give a more clear idea about the subject we are going to treat, let us consider a functional $J:E\rightarrow\R$ in the form
$J = \Psi - \Phi$, where $E$ is a reflexive Banach space and $\Psi,\Phi\in C^1(E,\R)$. When $J$ is of class $(\mathcal{J})$ in $E$, that is, $\Phi$ and $\Psi$ satisfy certain conditions to be stated in the next section, it is possible to relate $J$ with a real $C^1$-function $\zeta:[0,\infty)\rightarrow\R$, which we will call the {\it energy function}.

\medskip

The main feature on the approach we are about to describe is that it relates the existence of critical points for $\zeta$ with the existence of critical points for the functional $J$. More precisely, under appropriated conditions, we can ``change'' the effort of searching for critical points of functionals defined in infinite-dimensional Banach spaces for the task of finding critical points of a one variable real function, which is certainly more treatable, since it does not require more than basic tools of the classical differential Calculus to be executed. 
%This is possible, because for each $r>0$ there exists a function $u_r\in E$ that attains the minimum of $J$ in a certain set.

\medskip

The idea of trying to relate the energy functional of a certain elliptic problem to a real function was inspired by the work of Arcoya, Santos J\'unior and Su\'arez \cite{ASS}, which studied a problem involving a degenerate Kirchhoff operator of the form
\begin{equation}\label{0.2}
\left\{ \begin{array}{lll}
-m(\Vert u\Vert)\Delta u = f(u) &\mbox{in}&\Omega, \\
\hspace{2.0cm}u = 0 &\mbox{on}& \partial\Omega, \end{array} \right.
\end{equation}
where $\Omega\subset\R^N$ ($N\geq 1$) is a smooth bounded domain, $m:[0,\infty)\rightarrow\R$ and $f:\R\rightarrow\R$ are continuous functions satisfying appropriated conditions. The authors in \cite{ASS} defined a function $\alpha:[0,\infty)\rightarrow\R$ given by
\begin{equation}\label{funct}
\alpha(r) = \max_{\Vert u\Vert^2\leq r \atop u\in H_{0}^{1}(\Omega)}\int_{\Omega}F^{\ast}(u)dx, 
\end{equation}
where
$$ F^{\ast}(t)=\int_0^t f^{\ast}(s)ds,\,\,\mbox{with}\,\,
f^{\ast}(t) = 
\left\{ \begin{array}{lll}
f(0) &\mbox{if}& t < 0, \\
f(t) &\mbox{if}& 0 \leq t < s_{\ast}, \\
0 &\mbox{if}& s_{\ast} < t, \end{array} \right. $$
and showed that $\alpha$ is differentiable in $(0,\infty)$ with
$$ \alpha'(r) = \frac{1}{2r}\max_{u\in \mathcal{S}_r} \int_{\Omega}f^{\ast}(u)udx, $$
where
$$ \mathcal{S}_r := \left\lbrace u\in H_0^1(\Omega); \Vert u\Vert^2 \leq r \,\,\mbox{and}
\,\,\int_{\Omega}F^{\ast}(u)dx = \alpha(r) \right\rbrace. $$

\medskip

It is important to point out that it is not obvious or intuitive that $\alpha$, defined as in \eqref{funct}, is a differentiable, or even continuous, map regarding to $r$, mainly because the function which attains the maximum in \eqref{funct} cannot be unique. In fact, in the particular situation considered in \cite{ASS}, in order to prove the differentiability of $\alpha$, the authors were inspired by arguments first used in Szulkin and Weth \cite{SW}. Another important detail on \cite{ASS} that should be highlighted is that, the map $\alpha$ is only used to show that the weak solution found to the problem \eqref{0.2} is nontrivial and no relation was established between critical points of $\alpha$, or even of any other real function involving $\alpha$, and critical points of the energy functional associated to the problem \eqref{0.2}. 

\medskip

\medskip

 The paper is divided as follows: In Section \ref{se:method}, we introduce the energy function $\zeta:[0,\infty)\rightarrow\R$ (see (\ref{zeta})) in an abstract context and give general conditions under which $\zeta$ is $C^1((0,\infty), \R)$ and its critical points have a strong connexion with a certain class of critical points of $J$, see Theorem \ref{TME}. Furthermore, we provide an application of the method (MEF) to a class of semilinear problems which complements results obtained in \cite{MWW}. In Section \ref{se:mountain}, we apply the method (MEF) to get a new version of the Mountain Pass Theorem, see Theorem \ref{TPM}. Some interesting facts about Theorem \ref{TPM} are the following: 1) It provides a new characterization of the mountain pass level as a ``maxmin'' type constant. In fact, at least in our best knowledge, such a characterization has not previously appeared in the literature; 2) It allows to treat problems involving nonlinearities satisfying conditions weaker than Ambrosetti-Rabinowitz condition \cite{AR}, which have not been required in papers that address the matter, see for instance \cite{ASM, GJF, HMR, GS, LLu, LW, LW2, Liu, LiW, MP, MS} and references there in. Finally, in Section \ref{se:app}, in order to illustrate the applicability of Theorem \ref{TPM}, we revisite the studies made in Willem and Zou \cite{W}, Schechter and Zou \cite{SZ}, Miyagaki and Souto \cite{MS}, Azzollini and Pomponio \cite{AP} and Barile and Figueiredo \cite{BF}.

%%%%%%%%%%%%%%%%%%%%%%%%%%%%%%%%%%%%%%%%%%%%%%%%%%%%%%%%%%%%%%%%%%%%%%%%%%%%%%%%%
%%%%%%%%%%%%%%%%%%%%%%%%%%%%%%%%%%%%%%%%%%%%%%%%%%%%%%%%%%%%%%%%%%%%%%%%%%%%%%%%%
%%%%%%%%%%%%%%%%%%%%%%%%%%%%%%%%%%%%%%%%%%%%%%%%%%%%%%%%%%%%%%%%%%%%%%%%%%%%%%%%%

\section{The Method (MEF)}\label{se:method}

Throughout this article, $E$ denotes a reflexive Banach space. We say that a functional $J$ belongs to class $(\mathcal{J})$ in $E$, if $J:E\rightarrow\R$, $J = \Psi - \Phi$ and $\Psi, \Phi\in C^1(E,\R)$ satisfy the following hypotheses:
\begin{itemize}
\item[$(\Psi_1)$] $\Psi$ is convex, coercive and  weakly lower semicontinuous;  
%As fronteiras são não vazias pela coercividade
\item[$(\Psi_2)$] $\tilde{\mathcal{A}}_r =\lbrace u\in E;\,\Psi(u) \leq r\rbrace$ for all $r \geq 0$, $\tilde{\mathcal{A}}_{0}=\{0\}$ and $0 \in \mbox{int}(\tilde{\mathcal{A}}_r)$ for each $r> 0$;
\item[$(\Psi_3)$] $\Psi'(u)u\neq 0$ for all $u\in\partial\tilde{\mathcal{A}}_r$, where $\partial\tilde{\mathcal{A}}_r =\lbrace u\in E;\,\Psi(u) = r\rbrace $;
\item[$(\Psi_4)$] For each $u\not= 0$ and $r>0$, there exists a unique $t_u(r) > 0$ such that $t_u(r)u\in\partial\tilde{\mathcal{A}}_r$;
\item[$(\Psi_5)$] If $u_n \rightharpoonup u_0$ in $E$ and $\Psi(u_n) \to \Psi(u_0)$, then $u_n \to u_0$ in $E$;
\item[$(\Phi_1)$]$\Phi$ and  $u\mapsto\Phi'(u)u/\Psi'(u)u$ are weakly upper semicontinuous;
\item[$(\Phi_2)$] If $\Phi'(u) = 0$, then $\Phi(u)\leq 0$;
\item[$(\Phi_3)$] There exists a sequence $\lbrace u_n\rbrace\subset E$ such that $u_n\rightarrow 0$ and $\Phi(u_n) > 0$, for all $n\in\N$.
\end{itemize}

\medskip

Some consequences can be concluded from previous assumptions: 

\medskip

\begin{itemize}
\item[$(1)$] The coercivity in $(\Psi_1)$ ensures that the boundaries $\partial\tilde{\mathcal{A}}_r$ are nonempty, and it also implies the boundedness of the sets $\tilde{\mathcal{A}}_r$. 

\medskip

\item[$(2)$] From $(\Psi_2)$, it follows that $\Psi(u)=0$ if, and only if, $u=0$ and, moreover, $\Psi(u)>0$ if $u\neq 0$. 

\medskip

\item[$(3)$]  Combining information in (2), the coercivity in $(\Psi_1)$ and $(\Psi_3)$, we conclude that $\Psi'(u)u> 0$ for all $u\neq 0$. 
\end{itemize}

\medskip

\begin{proposition}\label{pro1.1}
Suppose that $\Psi$ satisfies $(\Psi_1)$, $0 \in \mbox{int}(\tilde{\mathcal{A}}_r)$ for each $r>0$, and $\Phi$ is weakly upper semicontinuous and satisfies $(\Phi_2)-(\Phi_3)$. Then, for each $r > 0$, $\Phi$ is bounded from above on $\tilde{\mathcal{A}}_r$ and its maximum is attained at the boundary. In other words, there exists $u_r\in\partial\tilde{\mathcal{A}}_r$ such that
$$ \Phi(u_r) = \max_{u\in\tilde{\mathcal{A}}_r}\Phi(u)
= \max_{u\in\partial\tilde{\mathcal{A}}_r}\Phi(u). $$
\end{proposition}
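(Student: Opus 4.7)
The plan is to first establish the existence of a maximizer of $\Phi$ on $\tilde{\mathcal{A}}_r$ via a weak compactness argument, and then exclude the possibility that such a maximizer lies in the topological interior by combining $(\Phi_2)$ with $(\Phi_3)$.

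For the existence part, I would argue as follows. Since $E$ is reflexive, bounded sets are relatively weakly compact. The coercivity of $\Psi$ in $(\Psi_1)$ forces $\tilde{\mathcal{A}}_r$ to be bounded, and the weak lower semicontinuity of $\Psi$ makes the sublevel set $\tilde{\mathcal{A}}_r = \{u \in E : \Psi(u) \leq r\}$ weakly closed. Hence $\tilde{\mathcal{A}}_r$ is weakly compact. Since $\Phi$ is weakly upper semicontinuous by hypothesis, $\Phi$ is bounded from above on $\tilde{\mathcal{A}}_r$ and attains its supremum: there exists $u_r \in \tilde{\mathcal{A}}_r$ with $\Phi(u_r) = \max_{u \in \tilde{\mathcal{A}}_r}\Phi(u)$.

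Next I would show that any such maximizer must satisfy $\Psi(u_r) = r$. Suppose by contradiction that $u_r \in \mathrm{int}(\tilde{\mathcal{A}}_r)$. Since $\Phi \in C^1(E,\R)$ and $u_r$ is interior, $u_r$ is a local maximum of $\Phi$ on the whole space $E$, so $\Phi'(u_r) = 0$. Hypothesis $(\Phi_2)$ then gives $\Phi(u_r) \leq 0$. On the other hand, $(\Phi_3)$ produces a sequence $\{u_n\} \subset E$ with $u_n \to 0$ and $\Phi(u_n) > 0$; because $0 \in \mathrm{int}(\tilde{\mathcal{A}}_r)$, we have $u_n \in \tilde{\mathcal{A}}_r$ for all $n$ sufficiently large, whence $\Phi(u_r) \geq \Phi(u_n) > 0$, contradicting $\Phi(u_r) \leq 0$. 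Therefore $u_r \notin \mathrm{int}(\tilde{\mathcal{A}}_r)$, and since $\tilde{\mathcal{A}}_r$ is (strongly) closed with $\Psi$ continuous, the set $\tilde{\mathcal{A}}_r \setminus \mathrm{int}(\tilde{\mathcal{A}}_r)$ is contained in $\{u \in E : \Psi(u) = r\} = \partial\tilde{\mathcal{A}}_r$. This places $u_r$ on $\partial\tilde{\mathcal{A}}_r$ and shows the two stated maxima coincide.

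I expect the only delicate point to be the identification $\tilde{\mathcal{A}}_r \setminus \mathrm{int}(\tilde{\mathcal{A}}_r) \subset \partial\tilde{\mathcal{A}}_r$ in the paper's notation (where $\partial\tilde{\mathcal{A}}_r$ is defined as the level set $\{\Psi = r\}$ rather than as a topological boundary). This follows because if $\Psi(u) < r$ then by continuity of $\Psi$ a whole neighborhood of $u$ lies in $\tilde{\mathcal{A}}_r$, so $u \in \mathrm{int}(\tilde{\mathcal{A}}_r)$. Everything else is a routine compactness-plus-semicontinuity argument, and the contradiction exploiting $(\Phi_2)$ and $(\Phi_3)$ is the structural heart of the proof.
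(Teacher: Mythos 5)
Your proof is correct and follows essentially the same route as the paper: weak compactness of $\tilde{\mathcal{A}}_r$ (via reflexivity, boundedness from coercivity, weak closedness) plus weak upper semicontinuity of $\Phi$ gives a maximizer, and the same contradiction between $(\Phi_2)$ and $(\Phi_3)$ rules out an interior maximizer. The one small improvement is that you justify $\tilde{\mathcal{A}}_r\setminus\mathrm{int}(\tilde{\mathcal{A}}_r)\subset\{\Psi=r\}$ using only the inclusion $\{\Psi<r\}\subset\mathrm{int}(\tilde{\mathcal{A}}_r)$ (which follows from continuity alone), whereas the paper asserts the full equality $\mathrm{int}(\tilde{\mathcal{A}}_r)=\{\Psi<r\}$ without comment.
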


\begin{proof}

Since $\Psi$ satisfies $(\Psi_1)$, we have that $\tilde{\mathcal{A}}_r$ is convex, closed and bounded, and so, $\tilde{\mathcal{A}}_r$ is weakly closed and bounded, from where it follows that $\tilde{\mathcal{A}}_r$ is weakly compact. On the other hand, as $\Phi$ is weakly upper semicontinuous, it is bounded from above on $\tilde{\mathcal{A}}_r$ and there exists $u_r\in \tilde{\mathcal{A}}_r$ such that
$$ \Phi(u_r) = \max_{u\in\tilde{\mathcal{A}}_r}\Phi(u). $$

Now, we are going to show that $u_r\in \partial \tilde{\mathcal{A}}_r$. Arguing by contradiction, assume that $u_r$ belongs to the interior of $\tilde{\mathcal{A}}_r$, which is open, because $\Psi$ is continuous and
$$
\mathcal{A}_r:=\mbox{int}(\tilde{\mathcal{A}}_r) = \lbrace u\in E;\,\Psi(u) <  r\rbrace. 
$$
In this case, $u_r$ is a critical point of $\Phi$, that is, $\Phi'(u_r) = 0$. Using ($\Phi_3$) together with the fact that $0 \in \mbox{int}(\tilde{\mathcal{A}}_r)$ for each $r>0$ , there exists $n_0 \in\N$ such that $u_{n_0}\in\tilde{\mathcal{A}}_r$ and
$$ 
0 < \Phi(u_{n_0}) \leq \max_{u\in\tilde{\mathcal{A}}_r}\Phi(u) = \Phi(u_r). 
$$
However, from ($\Phi_2$), we also have that $\Phi(u_r)\leq0$, which is a contradiction, proving that $u_r\in\partial\mathcal{A}_r$.

\end{proof}

\begin{remark}\label{obs}
Let us consider the set
$$ \mathcal{G}_r := \left\lbrace u\in\tilde{\mathcal{A}_r};
\hspace{0.1cm}\Phi(u) = \max_{v\in\tilde{\mathcal{A}_r}}\Phi(v)\right\rbrace. $$
It is an immediate consequence of the proof of Proposition \ref{pro1.1} that, if $\Psi$ satisfies $(\Psi_1)$ and $\Phi$ is weakly upper semicontinuous, then the set $\mathcal{G}_r$ is nonempty. So, it makes sense to consider the hypothesis
\begin{itemize}
\item[$(\Phi_2')$] For each $r> 0$, there exists $c_r>0$, such that $c_r\Phi(u)\leq \Phi'(u)u$ for all $u\in \mathcal{G}_r$.
\end{itemize}
Note that, Proposition \ref{pro1.1} (as well as all subsequent results that make use of it) still holds true if we replace hypothesis $(\Phi_2)$ by $(\Phi_2')$. It is important to point out that $(\Phi_2)$ and $(\Phi_{2}')$ does not imply each other.
\end{remark}

\medskip

From previous remark, the set $\mathcal{G}_r$ is well-defined. Moreover, by Proposition \ref{pro1.1}, one has $\mathcal{G}_r \subset \partial\tilde{\mathcal{A}_r}$.

\medskip

In the following, under the assumptions of Proposition \ref{pro1.1}, it is possible to introduce a real function of real variable which is naturally related to the functional $J$, in a sense that we will clarify later on (see Theorem \ref{TME}). This function plays a crucial role in the next results to be presented in this manuscript.

\medskip

Suppose that the assumptions in Proposition \ref{pro1.1} hold. We call \textit{energy function} to the function $\zeta:[0,\infty)\rightarrow\R$ defined by
\begin{equation} \label{zeta} 
\zeta(r) := r - \varphi(r),
\end{equation}
where $\varphi(r) := \displaystyle \max_{u\in\partial\tilde{\mathcal{A}}_r} \Phi(u)$ if $r>0$ and $\varphi(0)=\Phi(0)$.

\medskip

Observe that the energy function and the functional $J$ are related by the following equality
\begin{equation} \label{energyfunction}
\zeta(r) =  r - \max_{u\in\partial\tilde{\mathcal{A}_r}}\Phi(u)
= \min_{u\in\partial\tilde{\mathcal{A}}_r}J(u)=J(u_r) \ \mbox{for all $r>0$ and $\zeta(0)=J(0)$}.
\end{equation}

In what follows, our main goal is to show that we can change the task of finding critical points of $J$ for the simpler task of finding critical points for $\zeta$. In order to establish this relation, we will study the differentiability of function $\varphi$.

\begin{proposition}
Suppose that $\Psi$ satisfies $(\Psi_1)$, $(\Psi_2)$ and $(\Psi_4)$, and $\Phi$ is weakly upper semicontinuous and satisfies $(\Phi_2)-(\Phi_3)$. Then, function $\varphi:[0,\infty)\to\R$ is continuous.
\end{proposition}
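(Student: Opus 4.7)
The plan is to establish continuity by showing that $\varphi$ is simultaneously upper and lower semicontinuous on $[0,\infty)$. First I would note that Proposition \ref{pro1.1} applies in full (the required fact $0\in\mathrm{int}(\tilde{\mathcal{A}}_r)$ is supplied by $(\Psi_2)$), so the maximum defining $\varphi(r)$ is attained and equals the maximum of $\Phi$ over the entire sublevel set $\tilde{\mathcal{A}}_r$. A direct consequence is that $\varphi$ is nondecreasing, because $r_1\leq r_2$ implies $\tilde{\mathcal{A}}_{r_1}\subset\tilde{\mathcal{A}}_{r_2}$.

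For upper semicontinuity at $r\geq 0$, I would take $r_n\to r$ and choose $u_n\in\partial\tilde{\mathcal{A}}_{r_n}$ with $\Phi(u_n)=\varphi(r_n)$. Coercivity of $\Psi$ together with $\Psi(u_n)=r_n$ bounded forces $\{u_n\}$ to be bounded in the reflexive space $E$, so along a subsequence $u_n\rightharpoonup u_0$. Weak lower semicontinuity of $\Psi$ yields $\Psi(u_0)\leq\liminf\Psi(u_n)=r$, hence $u_0\in\tilde{\mathcal{A}}_r$, and weak upper semicontinuity of $\Phi$ gives $\limsup\varphi(r_n)=\limsup\Phi(u_n)\leq\Phi(u_0)\leq\varphi(r)$. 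The case $r=0$ is the same, with $(\Psi_2)$ forcing $u_0=0$, so $\limsup\varphi(r_n)\leq\Phi(0)=\varphi(0)$.

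For lower semicontinuity at $r>0$, I would pick a maximizer $u\in\partial\tilde{\mathcal{A}}_r$ with $\Phi(u)=\varphi(r)$; since $\Psi(u)=r>0$, necessarily $u\neq 0$. Given $r_n\to r$, hypothesis $(\Psi_4)$ provides a unique $t_n:=t_u(r_n)>0$ with $t_n u\in\partial\tilde{\mathcal{A}}_{r_n}$. Coercivity of $\Psi$ bounds $\{t_n u\}$ in $E$, so $\{t_n\}$ is bounded in $\R$; every accumulation point $t^*$ of $\{t_n\}$ satisfies $\Psi(t^* u)=r$ by continuity of $\Psi$, and the uniqueness in $(\Psi_4)$ then pins down $t^*=1$, giving $t_n\to 1$ and $t_n u\to u$ in norm. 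Continuity of $\Phi$ (available in the class $(\mathcal{J})$ framework since $\Phi\in C^1(E,\R)$) yields $\Phi(t_n u)\to\Phi(u)$, and because $\varphi(r_n)\geq\Phi(t_n u)$, we conclude $\liminf\varphi(r_n)\geq\varphi(r)$. Right continuity at $r=0$ follows from monotonicity together with $\varphi(r_n)\geq\Phi(0)=\varphi(0)$, which is valid since $0\in\tilde{\mathcal{A}}_{r_n}$.

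I expect the lower semicontinuity at $r>0$ to be the main obstacle. The difficulty is that the maximizer $u$ on $\partial\tilde{\mathcal{A}}_r$ need not lie inside $\tilde{\mathcal{A}}_{r_n}$ when $r_n<r$, so one has to transport it by the radial rescaling from $(\Psi_4)$. The technical core reduces to the two-step argument that the scaling factors $t_n$ converge to $1$ (via coercivity plus uniqueness) and that $\Phi$ is strongly continuous along $t_n u\to u$, where one implicitly leans on the $C^1$-regularity of $\Phi$ from the ambient class $(\mathcal{J})$ rather than just the weak upper semicontinuity listed in the statement.
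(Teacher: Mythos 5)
Your proof is correct and follows essentially the same strategy as the paper's: weak sequential compactness of the bounded sublevel sets yields upper semicontinuity, while the radial rescaling $t_u(\cdot)$ from $(\Psi_4)$, with $t_n\to 1$ pinned down by uniqueness, yields lower semicontinuity. Your reorganization into explicit upper/lower-semicontinuity halves and your use of monotonicity (instead of the paper's $(\Phi_3)$-renumeration) to settle the case $r=0$ is slightly cleaner, but the ingredients are the same, and your closing remark that strong continuity of $\Phi$ is being used implicitly (via the standing $C^1$ assumption on the class $(\mathcal{J})$) applies equally to the paper's argument.
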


\begin{proof}

It is enough to take an arbitrary sequence $\lbrace r_n\rbrace\subset (0, \infty)$ with $r_n \rightarrow r_0\geq 0$. Before that, observe that for each $n\in\N$, we know from Proposition \ref{pro1.1} that, there exists a function $u_n\in\partial\tilde{\mathcal{A}_{r_{n}}}$ such that
$$ \Phi(u_n) = \max_{u\in\partial\tilde{\mathcal{A}}_{r_n}}\Phi(u) = \max_{u\in\tilde{\mathcal{A}}_{r_n}}\Phi(u). $$

We will show that $\lbrace u_n\rbrace$ is bounded. In fact, since $\lbrace r_n\rbrace$ is bounded, there exists $R > 0$ such that $\Psi(u_n) = r_n \leq R$ for all $n\in\N$, showing that $\{u_{n}\}\subset \tilde{\mathcal{A}}_R$, which is a bounded set by $(\Psi_1)$. Hence, up to a subsequence,
\begin{equation}\label{fraca}
u_n \rightharpoonup u_0 \,\,\mbox{in}\,\,E. 
\end{equation}
Next, we will consider two cases, namely $r_0=0$ and $r_0>0$. \\

\noindent {\bf Case 1:} $r_{0}=0$.

\medskip

In this case, by $(\Psi_{2})$, the lower weak semicontinuity of $\Psi$ and \eqref{fraca}, one has
$$
0\leq \Psi(u_{0})\leq  \liminf_{n\rightarrow\infty}\Psi(u_{n})= \liminf_{n\rightarrow\infty} r_{n}=0.
$$
Therefore, $\Psi(u_{0})=0$ and, again, by $(\Psi_{2})$, $u_{0}=0$.

\medskip

Finally, from $(\Phi_{3})$, there exists $\{v_{n}\}\subset E$ with $v_{n}\to 0$ and $\Phi(v_{n})>0$. Now, from $(\Psi_2)$, if necessary, we can renumerate $\{v_{n}\}$ in such a way that $v_{n}\in \tilde{\mathcal{A}}_{r_{n}}$. Thus, since $\Phi$ is continuous and weakly upper semicontinuous, we get
$$
\Phi(0)= \lim_{n\to\infty}\Phi(v_{n})\leq  \liminf_{n\rightarrow\infty}\Phi(u_{n})\leq \limsup_{n\rightarrow\infty}\Phi(u_{n})\leq \Phi(0),
$$
showing that  
$$
\varphi(r_{n})=\Phi(u_{n})\to \Phi(0)=\varphi(0).
$$

\noindent {\bf Case 2:} $r_{0}>0$.

\medskip

Since $\Psi$ is weakly lower semicontinuous,
$$ \Psi(u_0) \leq \liminf_{n\rightarrow\infty} \Psi(u_n)
= \liminf_{n\rightarrow\infty} r_n = r_0, $$
which means that $u_0\in\tilde{\mathcal{A}}_{r_0}$. Thus,
\begin{equation}\label{varphicont1}
\Phi(u_0) \leq \varphi(r_0).
\end{equation}
On the other hand, consider $u_{\ast}\in\partial\tilde{\mathcal{A}}_{r_0}$ such that
$$
\varphi(r_0) = \Phi(u_{\ast}). 
$$
Gathering $(\Psi_4)$ and coercivity of $\Psi$, there exists a bounded sequence $\{t_n\} \subset (0, +\infty)$ such that $t_n u_{\ast}\in\partial\tilde{\mathcal{A}}_{r_n}$. Then
$$
\Phi\left(t_n u_{\ast}\right) \leq \Phi(u_n), \quad \forall n \in \mathbb{N}.  
$$
\begin{claim} $t_n\rightarrow 1$ as $n \to \infty$.
	
\end{claim} 
\noindent Assume that for some subsequence of $\{t_n\}$, still denoted by itself, we have $t_n \to t_0$. Since $\Psi$ is continuous, we must have 
$$
\Psi(t_0 u_{\ast})=\lim_{n\rightarrow\infty}\Psi(t_n u_{\ast}) = \lim_{n\rightarrow\infty} r_n = r_0 = \Psi(u_{\ast}),
$$
from where it follows that $t_0 u_{\ast} \in \partial\tilde{\mathcal{A}}_{r_0}$. As $u_{\ast}\in\partial\tilde{\mathcal{A}}_{r_0}$, $(\Psi_4)$ yields that $t_0=1$ and the Claim is proved. 

\medskip

The claim above permits to conclude that 
$$
\varphi(r_0) =\Phi\left(u_{\ast}\right)=\limsup_{n\rightarrow\infty}\Phi\left(t_n u_{\ast}\right)
\leq \limsup_{n\rightarrow\infty}\Phi(u_n). 
$$
From $(\Phi_1)$,
\begin{equation}\label{varphicont2}
\varphi(r_0) \leq \limsup_{n\rightarrow\infty}\Phi(u_n) \leq \Phi(u_0).
\end{equation}
Employing (\ref{varphicont1}) and (\ref{varphicont2}), one finds
$$
\varphi(r_0) = \Phi(u_0). 
$$
Again, by $(\Psi_4)$ and coercivity of $\Psi$, there exists a bounded sequence $\{s_n\} \subset (0,+\infty) $ such that $s_n u_0\in\partial\tilde{\mathcal{A}}_{r_n}$ and $s_n \to 1$. Hence, 
$$
\Phi(u_0) = \liminf_{n\rightarrow\infty}\Phi(s_n u_0) \leq \liminf_{n\rightarrow\infty}\Phi(u_n). 
$$
On the other hand, as $\Phi$ is weakly upper semicontinuous,
$$
\limsup_{n\rightarrow\infty}\Phi(u_n) \leq \Phi(u_0). 
$$
The last two inequalities give 
$$
\lim_{n\rightarrow\infty}\Phi(u_n) = \Phi(u_0). 
$$
Therefore,
$$
\lim_{n\rightarrow\infty} \varphi(r_n) = \lim_{n\rightarrow\infty} \Phi(u_n)
= \Phi(u_0) = \varphi(r_0). 
$$

\end{proof}

\begin{proposition}\label{pro1.5}
Suppose that $\Psi$ and $\Phi$ satisfy $(\Psi_1)-(\Psi_3)$ and $(\Phi_1)-(\Phi_3)$ respectively. Then, the functional $u\mapsto \frac{\Phi'(u)u}{\Psi'(u)u}$ attains the maximum in the set
$$
\mathcal{G}_r = \left\lbrace u\in \tilde{\mathcal{A}_r}; \,\Phi(u) = \varphi(r)\right\rbrace. 
$$
\end{proposition}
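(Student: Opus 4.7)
The plan is the direct weak-compactness argument. Since $\mathcal{G}_r \subset \tilde{\mathcal{A}}_r$ and the latter is bounded by the coercivity of $\Psi$ in $(\Psi_1)$, the quotient $Q(u) := \Phi'(u)u/\Psi'(u)u$ is well-defined on $\mathcal{G}_r$ (the denominator is nonzero by $(\Psi_3)$ since $\mathcal{G}_r \subset \partial \tilde{\mathcal{A}}_r$ by Proposition \ref{pro1.1}). Moreover, $(\Phi_1)$ asserts that $Q$ is weakly upper semicontinuous, and $\tilde{\mathcal{A}}_r$ is weakly compact (being convex, closed, and bounded); hence $Q$ is bounded from above on $\mathcal{G}_r$ by some $M \in \R$. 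I would then pick a maximizing sequence $\{u_n\} \subset \mathcal{G}_r$, i.e. $Q(u_n) \to M$.

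Next, by boundedness and reflexivity of $E$, up to a subsequence $u_n \rightharpoonup u_0$ in $E$. The key step is to verify that $u_0 \in \mathcal{G}_r$. On one hand, weak lower semicontinuity of $\Psi$ from $(\Psi_1)$ yields
$$
\Psi(u_0) \leq \liminf_{n \to \infty} \Psi(u_n) \leq r,
$$
so $u_0 \in \tilde{\mathcal{A}}_r$ and therefore $\Phi(u_0) \leq \varphi(r)$. On the other hand, weak upper semicontinuity of $\Phi$ (from $(\Phi_1)$) together with $\Phi(u_n) = \varphi(r)$ for all $n$ gives
$$
\Phi(u_0) \geq \limsup_{n \to \infty} \Phi(u_n) = \varphi(r).
$$
Combining both inequalities, $\Phi(u_0) = \varphi(r)$, i.e. $u_0 \in \mathcal{G}_r$.

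Finally, since $u_0 \in \mathcal{G}_r \subset \partial \tilde{\mathcal{A}}_r$, hypothesis $(\Psi_3)$ ensures $\Psi'(u_0)u_0 \neq 0$, so $Q(u_0)$ is well-defined. Applying the weak upper semicontinuity of $Q$ provided by $(\Phi_1)$,
$$
M = \limsup_{n \to \infty} Q(u_n) \leq Q(u_0) \leq M,
$$
so equality holds and the maximum is attained at $u_0$.

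The main obstacle is really just the identification $u_0 \in \mathcal{G}_r$; everything else is a routine application of weak compactness once the two semicontinuity statements are combined with $\Phi(u_n) \equiv \varphi(r)$ on the maximizing sequence. No genuinely new hypothesis beyond those already used in Proposition \ref{pro1.1} is needed, which explains why the statement only lists $(\Psi_1)$--$(\Psi_3)$ and $(\Phi_1)$--$(\Phi_3)$.
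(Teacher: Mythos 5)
Your proof is correct and follows essentially the same route as the paper: the key step in both is showing that the weak limit of a sequence in $\mathcal{G}_r$ lies again in $\mathcal{G}_r$ (via weak lower semicontinuity of $\Psi$ on one side and weak upper semicontinuity of $\Phi$ on the other), which is exactly the weak-closedness of $\mathcal{G}_r$ that the paper establishes before invoking the weakly-upper-semicontinuous-on-weakly-compact principle. The only cosmetic difference is that you run a maximizing sequence directly rather than first asserting weak compactness of $\mathcal{G}_r$ and then quoting the abstract extremum principle; this also makes your preliminary claim that $Q$ is bounded above on $\mathcal{G}_r$ superfluous (and, as phrased, slightly imprecise since $Q$ is not defined at $u=0\in\tilde{\mathcal{A}}_r$), because the sequence argument yields boundedness automatically once $u_0\in\mathcal{G}_r$ is established.
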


\begin{proof}

By $(\Psi_1)$, $\mathcal{G}_{r}$ is a bounded set. Now, we will prove that $\mathcal{G}_{r}$ is weakly closed. Since $\tilde{\mathcal{A}_r} \subset \overline{B}_R(0) $ for some $R>0$,  $\overline{B}_R(0) $ is weakly closed and $\overline{B}_R(0) $ is metrizable by the weak topology, it is enough to prove that if $\lbrace u_n\rbrace\subset \mathcal{G}_{r}$ and $u_n\rightharpoonup u_0$ in $E$, then $u_0 \in \mathcal{G}_{r}$. 
From $(\Psi_1)$,
\begin{equation}\label{acaba}
\Psi(u_0) \leq \liminf_{n\rightarrow\infty}\Psi(u_n) = r, 
\end{equation}
which means that $u_{0}\in \tilde{\mathcal{A}}_{r}$ and
$$
\Phi(u_0) \leq \varphi(r). 
$$
On the other hand, since $\Phi$ is weakly upper semicontinuous and $\lbrace u_n\rbrace\subset \mathcal{G}_{r}$, it follows that
\begin{equation}\label{logo}
\varphi(r) = \limsup_{n\rightarrow\infty}\Phi(u_n) \leq \Phi(u_0). 
\end{equation}
So, from \eqref{acaba}, \eqref{logo} and Proposition \ref{pro1.1}, we get $u_0\in \mathcal{G}_{r}$. Therefore, $\mathcal{G}_{r}$ is weakly closed and bounded and, consequently, weakly compact. Furthermore, by $(\Psi_3)$ and $(\Phi_1)$, the function $u\mapsto\Phi'(u)u/\Psi'(u)u$ is well defined and is weakly upper semicontinuous, then the maximum of this functional is attained in $\mathcal{G}_r$.
\end{proof}

We say that a function $u_{r}\in \mathcal{G}_r$ is an {\it energy maximum type point} if 
$$
\frac{\Phi'(u_{r})u_{r}}{\Psi'(u_{r})u_{r}}=\max_{u\in \mathcal{G}_r}\frac{\Phi'(u)u}{\Psi'(u)u}.
$$
Observe that an energy maximum type point $u_{r}\in \partial\tilde{\mathcal{A}}_{r}$ maximizes at the same time $\Phi$ in $\tilde{\mathcal{A}}_{r}$ and $u\mapsto \frac{\Phi'(u)u}{\Psi'(u)u}$ in $\mathcal{G}_r$, and, for sure, it is nontrivial when $r>0$.

\medskip

Before proving the differentiability of function $\varphi$, we need the following lemma about the differentiability of mapping $r\mapsto t_{u}(r)$, for each $u\neq 0$ fixed.

\medskip

\begin{lemma}\label{derivamos}
Suppose that $\Psi$ is coercive and satisfies $(\Psi_2)-(\Psi_4)$. Then, for each $u\neq 0$ fixed, the application $r\mapsto t_{u}(r)$, given in $(\Psi_4)$, is $C^{1}((0, \infty), \R)$ with
\begin{equation}
t_{u}'(r)=\frac{1}{\Psi'(t_{u}(r)u)u}, \ \mbox{for all $r>0$}.
\end{equation}
\end{lemma}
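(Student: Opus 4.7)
The plan is to reduce the statement to the one-variable inverse function theorem applied to the auxiliary function $G:(0,\infty)\to\R$ defined by $G(t):=\Psi(tu)$, where $u\neq 0$ is the fixed element given in the statement. Since $\Psi\in C^{1}(E,\R)$, we immediately have $G\in C^{1}((0,\infty),\R)$ with $G'(t)=\Psi'(tu)u$, so the entire lemma boils down to inverting $G$ and checking that $G'$ does not vanish at the preimage $t_{u}(r)$.

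First I would verify that $G$ is a strictly monotonic bijection from $(0,\infty)$ onto $(0,\infty)$. From $(\Psi_{2})$ we have $\Psi(0)=0$, whence $G(t)\to 0$ as $t\to 0^{+}$ by continuity, and $\Psi(tu)>0$ for $t>0$; combined with the coercivity of $\Psi$ (which forces $G(t)\to+\infty$ as $t\to+\infty$), the intermediate value theorem yields surjectivity onto $(0,\infty)$. Injectivity is precisely the uniqueness asserted in $(\Psi_{4})$. A continuous injection on an interval is strictly monotonic, and the boundary behavior makes it strictly increasing. Therefore $G^{-1}:(0,\infty)\to(0,\infty)$ exists and coincides with the map $r\mapsto t_{u}(r)$.

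Next I would check that $G'(t_{u}(r))\neq 0$ for every $r>0$. By definition, $t_{u}(r)u\in\partial\tilde{\mathcal{A}}_{r}$, so hypothesis $(\Psi_{3})$ gives $\Psi'(t_{u}(r)u)(t_{u}(r)u)\neq 0$, and item $(3)$ of the consequences recorded after the hypotheses refines this to $\Psi'(t_{u}(r)u)(t_{u}(r)u)>0$. Dividing by $t_{u}(r)>0$ yields
\[
G'(t_{u}(r))=\Psi'(t_{u}(r)u)u=\frac{1}{t_{u}(r)}\Psi'(t_{u}(r)u)(t_{u}(r)u)>0.
\]

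Finally, the classical one-variable inverse function theorem applied to $G$ near each point $t_{u}(r_{0})$ (with $r_{0}>0$ arbitrary) yields that $G^{-1}=t_{u}(\cdot)$ is $C^{1}$ in a neighborhood of $r_{0}$ with
\[
t_{u}'(r_{0})=\frac{1}{G'(t_{u}(r_{0}))}=\frac{1}{\Psi'(t_{u}(r_{0})u)u},
\]
which is exactly the stated formula. I do not anticipate a real obstacle: the non-trivial structural facts (existence, uniqueness of $t_{u}(r)$ and non-vanishing of $\Psi'(v)v$ on the level sets) are already encoded in $(\Psi_{2})$--$(\Psi_{4})$ and the preliminary remarks, leaving only a direct verification of the hypotheses of the inverse function theorem.
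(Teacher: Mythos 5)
Your proof is correct and follows essentially the same route as the paper: both reduce the problem to inverting the auxiliary real function $h_u(t)=\Psi(tu)$ and then applying the chain rule / inverse function theorem. You simply fill in more of the details (surjectivity via IVT, injectivity via $(\Psi_4)$, strict monotonicity, and the non-vanishing of $G'$ at $t_u(r)$ via consequence (3)) that the paper states tersely.
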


\begin{proof}
In what follows, we define $h:[0,\infty)\to\R$ by $h_{u}(t)=\Psi(tu)$. From the coercivity of $\Psi$ and $(\Psi_2)-(\Psi_3)$, we infer that $h_{u}(0)=0$, $\displaystyle \lim_{t\to\infty}h_{u}(t)=\infty$ and $h_{u}$ is increasing. Thus, there exists the inverse $h_{u}^{-1}$ and from $(\Psi_{4})$
$$
t_{u}(r)=h_{u}^{-1}(r), \ \mbox{for all $r>0$}.
$$
The result follows now from the chain's rule.
\end{proof}

\medskip

After that previous analysis, we are ready to prove that $\varphi$ is $C^{1}$ in $(0, \infty)$.

\medskip

\begin{proposition}\label{provardif}
Suppose that $J=\Psi-\Phi$ belongs to class $(\mathcal{J})$ in $E$. Then, $\varphi\in C^{1}((0,\infty), \R)$ and
$$
\varphi'(r) = \max_{u\in\mathcal{G}_r}\frac{\Phi'(u)u}{\Psi'(u)u}.
$$
\end{proposition}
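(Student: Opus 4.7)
The plan is to compute both one-sided derivatives of $\varphi$ at each $r>0$, show they coincide with $M(r):=\max_{u\in\mathcal{G}_r}\Phi'(u)u/\Psi'(u)u$ (which is well-defined by Proposition~\ref{pro1.5}), and finally verify that $M$ is continuous on $(0,\infty)$. The central tool is the $C^1$-curve $s\mapsto t_v(s)v\in\partial\tilde{\mathcal{A}}_s$ produced by $(\Psi_4)$ and Lemma~\ref{derivamos}; using the homogeneity identity $\Psi'(u)v=\Psi'(u)u/t_v(s)$ (and the analogous one for $\Phi'$) at $u=t_v(s)v$, the chain rule yields
\[
\frac{d}{ds}\Phi\bigl(t_v(s)v\bigr)\;=\;\frac{\Phi'(t_v(s)v)\,(t_v(s)v)}{\Psi'(t_v(s)v)\,(t_v(s)v)}.
\]

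Fix $r>0$. For the lower bound on the right-derivative, pick $u_0\in\mathcal{G}_r$; uniqueness in $(\Psi_4)$ forces $t_{u_0}(r)=1$, and $\Phi(t_{u_0}(s)u_0)\leq\varphi(s)$ for every $s>0$ with equality at $s=r$. Differencing and letting $h\to 0^+$ produces $\liminf_{h\to 0^+}[\varphi(r+h)-\varphi(r)]/h\geq \Phi'(u_0)u_0/\Psi'(u_0)u_0$, and taking the supremum over $u_0\in\mathcal{G}_r$ yields $\geq M(r)$. For the matching upper bound, select $v_h\in\mathcal{G}_{r+h}$; since $t_{v_h}(r)v_h\in\partial\tilde{\mathcal{A}}_r$ one has $\Phi(t_{v_h}(r)v_h)\leq\varphi(r)$, and the mean value theorem applied to $s\mapsto\Phi(t_{v_h}(s)v_h)$ on $[r,r+h]$ produces $\xi_h\in(r,r+h)$ with
\[
\frac{\varphi(r+h)-\varphi(r)}{h}\;\leq\;\frac{\Phi'(u_h)u_h}{\Psi'(u_h)u_h},\qquad u_h:=t_{v_h}(\xi_h)v_h,\ \ \Psi(u_h)=\xi_h.
\]
A symmetric argument covers $h\to 0^-$, so once the right-hand side is shown to pass to a value $\leq M(r)$, we obtain $\varphi'(r)=M(r)$.

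This last step is the principal obstacle, and it is where hypothesis $(\Psi_5)$ enters decisively. Boundedness of $\{v_h\}$ from $(\Psi_1)$ gives a weakly convergent subsequence $v_h\rightharpoonup v_0$; replaying the continuity proof of $\varphi$ (weak lower semicontinuity of $\Psi$ combined with $(\Phi_1)$ and the rescaling trick) places $v_0\in\mathcal{G}_r$, so in particular $\Psi(v_0)=r$. Hypothesis $(\Psi_5)$ then upgrades the convergence to $v_h\to v_0$ in $E$; the identity $\Psi(t_{v_h}(\xi_h)v_h)=\xi_h\to r=\Psi(v_0)$, together with uniqueness in $(\Psi_4)$, forces $t_{v_h}(\xi_h)\to 1$, and hence $u_h\to v_0$ strongly. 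Continuity of $\Phi'$ and $\Psi'$ then closes the loop via $\Phi'(u_h)u_h/\Psi'(u_h)u_h\to\Phi'(v_0)v_0/\Psi'(v_0)v_0\leq M(r)$.

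Finally, continuity of $M$ on $(0,\infty)$ follows from the same compactness mechanism. Upper semicontinuity is immediate: if $r_n\to r$ and $u_n\in\mathcal{G}_{r_n}$ realizes $M(r_n)$, a subsequence converges strongly (again by $(\Psi_5)$) to some $u^*\in\mathcal{G}_r$, whence $\limsup M(r_n)=\lim \Phi'(u_n)u_n/\Psi'(u_n)u_n=\Phi'(u^*)u^*/\Psi'(u^*)u^*\leq M(r)$. Lower semicontinuity is the subtlest point: my plan is to apply the mean value theorem to the (now differentiable) $\varphi$ between $r_n$ and $r$, producing $\xi_n$ between them with $M(\xi_n)=[\varphi(r_n)-\varphi(r)]/(r_n-r)\to \varphi'(r)=M(r)$, and then to compare $M(r_n)$ with $M(\xi_n)$ by reusing the two difference-quotient estimates of paragraph two along a diagonal $h_n\to 0$, with the strong convergence furnished by $(\Psi_5)$ propagating through the ratio. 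Combining both semicontinuities gives $\varphi'=M\in C^0((0,\infty),\R)$, completing the proof.
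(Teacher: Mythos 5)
Your plan for the right derivative is sound and closely tracks the paper: the $u_0$-estimate gives $\liminf_{h\to 0^+}[\varphi(r+h)-\varphi(r)]/h\geq M(r)$ and the $v_h$-estimate with the $(\Psi_5)$-compactness gives $\limsup_{h\to 0^+}\leq M(r)$, so $\varphi'_+(r)=M(r)$. The gap is in the sentence ``a symmetric argument covers $h\to 0^-$.'' When $h<0$ the two difference-quotient inequalities \emph{flip sign} on division by $h$. The $u_0$-estimate $\Phi(t_{u_0}(s)u_0)\leq\varphi(s)$ now produces
\[
\limsup_{h\to 0^-}\frac{\varphi(r+h)-\varphi(r)}{h}\;\leq\;\frac{\Phi'(u_0)u_0}{\Psi'(u_0)u_0}\quad\text{for \emph{every} }u_0\in\mathcal{G}_r,
\]
so taking the infimum gives $\limsup_{h\to 0^-}\leq\min_{u\in\mathcal{G}_r}\Phi'(u)u/\Psi'(u)u$; and the $v_h$-estimate, after the sign flip, gives only $\liminf_{h\to 0^-}\geq \Phi'(v_0)v_0/\Psi'(v_0)v_0$ for a subsequential limit $v_0\in\mathcal{G}_r$, which a priori is $\leq M(r)$. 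So the symmetric argument delivers $\varphi'_-(r)=\min_{u\in\mathcal{G}_r}\Phi'(u)u/\Psi'(u)u$, not $M(r)=\max$. You would need to know that the ratio is \emph{constant} on $\mathcal{G}_r$ to match the two one-sided derivatives, and nothing in your proposal establishes this. The paper's route addresses precisely this point by strengthening the compactness step: its Claim 1 asserts not merely that the weak limit $u_\ast$ of the maximizers lies in $\mathcal{G}_{r_0}$, but that $u_\ast$ is a \emph{maximum energy type point}, which forces the $v_h$-limit to be exactly $M(r_0)$ rather than some value $\leq M(r_0)$. Your compactness step only records $v_0\in\mathcal{G}_r$, which is enough for the one-sided bound $\limsup_{h\to 0^+}\leq M(r)$ but not for the left derivative.

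A second, related problem: your lower-semicontinuity argument for $M$ invokes the mean value theorem ``for the (now differentiable) $\varphi$'' with $\varphi'=M$, i.e.\ it presupposes exactly the differentiability statement whose proof is still incomplete because of the gap above. If you instead wanted to avoid the left derivative altogether and argue that $\varphi'_+=M$ on $(0,\infty)$ together with continuity of $M$ implies $\varphi\in C^1$ (a classical real-analysis fact), you would have to prove lower semicontinuity of $M$ by an independent route -- but passing from a maximizer of the ratio in $\mathcal{G}_{r_0}$ to a nearby competitor in $\mathcal{G}_{r_n}$ runs into the same obstruction, since $t_{u_0}(r_n)u_0$ need not lie in $\mathcal{G}_{r_n}$. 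In short: the right-derivative computation is correct and essentially the paper's, but the passage from $\varphi'_+$ to $\varphi'$ is exactly where the paper deploys the ``maximum energy type point'' assertion, and your proposal leaves that step unaddressed.
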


\begin{proof}

Fix $r_0 > 0$ and take $u_{r_0}\in\mathcal{G}_{r_0}$ and $u_{r_0 + r}\in\mathcal{G}_{r_0 + r}$, with $r>0$. Thereby, 
$$ 
\varphi(r_0+r)-\varphi(r_0) = 
\Phi(u_{r_0+r})-\Phi(u_{r_0}). 
$$
By $(\Psi_4)$, there exists a unique $t_{u_{r_0}}(r_0 + r)$ such that $t_{u_{r_0}}(r_0 + r)u_{r_0}\in\partial\tilde{\mathcal{A}}_{r_0+r}$. So,
$$
\Phi\left(t_{u_{r_0}}(r_0 + r)u_{r_0}\right)-\Phi(u_{r_0}) \leq
\varphi(r_0+r)-\varphi(r_0). 
$$
Since $t_{u_{r_0}}(r_0) = 1$, it follows from Lemma \ref{derivamos} and Mean Value Theorem that there exists a constant $0 < c_r < 1$ such that
$$
\Phi\left(t_{u_{r_0}}(r_0 + r)u_{r_0}\right)-\Phi(u_{r_0})=r\frac{\Phi'\left(t_{u_{r_0}}(r_0 + c_r r)u_{r_0}\right)u_{r_0}}{\Psi'(t_{u_{r_0}}(r_0+c_r r)u_{r_0})u_{r_0}}. 
$$
Consequently,
\begin{equation}\label{ineq1}
r\frac{\Phi'\left(t_{u_{r_0}}(r_0 + c_r r)u_{r_0}\right)u_{r_0}}{\Psi'(t_{u_{r_0}}(r_0+c_r r)u_{r_0})u_{r_0}}\leq \varphi(r_0+r)-\varphi(r_0). 
\end{equation}

On the other hand, there exists a unique $t_{u_{r_0+r}}(r_0)$ such that $t_{u_{r_0+r}}(r_0)u_{r_0+r}\in\partial\tilde{\mathcal{A}}_{r_0}$. Then
$$ 
\varphi(r_0 + r)-\varphi(r_0) \leq 
\Phi(u_{r_0+r}) - \Phi\left(t_{u_{r_0+r}}(r_0)u_{r_0+r}\right). 
$$
Again, as $t_{u_{r_0+r}}(r_0+r) = 1$, it follows from Lemma \ref{derivamos} and Mean Value Theorem that there exists a constant $0 < d_r < 1$ such that
\begin{equation}\label{ineq2}
\varphi(r_0 + r)-\varphi(r_0) \leq
r\frac{\Phi'\left(t_{u_{r_0+r}}(r_0 + d_r r)u_{r_0+r}\right)u_{r_0+r}}
{\Psi'\left(t_{u_{r_0+r}}(r_0 + d_r r)u_{r_0+ r}\right)u_{r_0+r}}.
\end{equation}
From \eqref{ineq1} and \eqref{ineq2},
\begin{equation}\label{agora}
r\frac{\Phi'\left(t_{u_{r_0}}(r_0 + c_r r)u_{r_0}\right)u_{r_0}}{\Psi'(t_{u_{r_0}}(r_0+c_r r)u_{r_0})u_{r_0}}\leq \varphi(r_0 + r)-\varphi(r_0) \leq
r\frac{\Phi'\left(t_{u_{r_0+r}}(r_0 + d_r r)u_{r_0+r}\right)u_{r_0+r}}
{\Psi'\left(t_{u_{r_0+r}}(r_0 + d_r r)u_{r_0+ r}\right)u_{r_0+r}}. 
\end{equation}

\medskip

Since $\Psi, \Phi$ are $C^{1}$ functionals, $r\mapsto t_{u_{r_{0}}}(r)$ is continuous (see Lemma \ref{derivamos}) and $t_{u_{r_0}}(r_{0})=1$, it is clear that
$$
\frac{\Phi'\left(t_{u_{r_0}}(r_0 + c_r r)u_{r_0}\right)u_{r_0}}{\Psi'(t_{u_{r_0}}(r_0+c_r r)u_{r_0})u_{r_0}}\to \frac{\Phi'(u_{r_0})u_{r_0}}{\Psi'(u_{r_0})u_{r_0}}, \quad \mbox{as} \quad r\to 0. 
$$
In particular, if we choose $u_{r_0}\in\mathcal{G}_{r_0}$ as a maximum energy type point, we have 
\begin{equation}\label{agora2}
\frac{\Phi'\left(t_{u_{r_0}}(r_0 + c_r r)u_{r_0}\right)u_{r_0}}{\Psi'(t_{u_{r_0}}(r_0+c_r r)u_{r_0})u_{r_0}}\to  \max_{u\in\mathcal{G}_{r_{0}}}\frac{\Phi'(u)u}{\Psi'(u)u}, \quad \mbox{as} \quad r\to 0. 
\end{equation}

\medskip

The behaviour of the right side term in inequality \eqref{agora}, as $r$ tends to zero, is much more delicate to be analyzed, since $u_{r_{0}+r}$ is depending on $r$. To do that, we need to prove some previous claims.

\medskip

\noindent {\bf Claim 1:} For each sequence $r_n \to 0^+$, we have that $u_{r_0+ r_n}\rightharpoonup u_{\ast}$, for some maximum energy type point $u_{\ast}\in \mathcal{G}_{r_{0}}$.

\medskip

From $(\Psi_4)$ and Proposition \ref{pro1.1}, for each $u\in \partial\tilde{\mathcal{A}}_{r_{0}}$, one has $t_{u}(r_{0}+r_n)u\in \partial\tilde{\mathcal{A}}_{r_{0}+r_n}$ with
\begin{equation}\label{ineq3}
\Phi(t_{u}(r_{0}+r_n)u)\leq \Phi(u_{r_0+ r_n})
\end{equation} 
Moreover, by $(\Psi_1)$, the sequence $\{u_{r_{0}+r_n}\}$ is bounded, and so, up to a subsequence, there exists $u_{\ast}\in E$ such that
\begin{equation}\label{conv1}
u_{r_0+ r_n}\rightharpoonup u_{\ast}.
\end{equation}
From Lemma \ref{derivamos}, $(\Phi_1)$, \eqref{ineq3} and \eqref{conv1}, 
$$
\Phi(t_{u}(r_{0})u)\leq \Phi(u_{\ast}).
$$
Since $u\in \partial\tilde{\mathcal{A}}_{r_{0}}$, it follows from $(\Psi_4)$ that $t_{u}(r_{0})=1$. Thus
\begin{equation}\label{ineq4}
\Phi(u)\leq \Phi(u_{\ast}), \ \mbox{for all $u\in \partial\tilde{\mathcal{A}}_{r_{0}}$}.
\end{equation}
On the other hand, from $\Psi(u_{r_{0}+r_n})=r_{0}+r_n$, \eqref{conv1} and $(\Psi_1)$, 
\begin{equation}\label{ineq5}
\Psi(u_{\ast})\leq r_{0}.
\end{equation}
It is a consequence of \eqref{ineq4}, \eqref{ineq5} and Proposition \ref{pro1.1} that $u_{\ast}\in\partial\tilde{\mathcal{A}}_{r_{0}}$ (in particular $u_{\ast}$ is nontrivial) and 
$$
\Phi(u_{\ast})=\max_{u\in \tilde{\mathcal{A}}_{r_{0}}}\Phi(u),
$$
and so,  
\begin{equation}\label{Alves}
u_{\ast}\in \mathcal{G}_{r_{0}}. 
\end{equation}
Since $u\mapsto \Phi'(u)u/\Psi'(u)u$ is also weakly upper semicontinuous, by replacing $\Phi$ for $u\mapsto \Phi'(u)u/\Psi'(u)u$ and repeating the same previous argument, we conclude that
$$
\frac{\Phi'(u)u}{\Psi'(u)u}\leq \frac{\Phi'(u_{\ast})u_{\ast}}{\Psi'(u_{\ast})u_{\ast}}, \ \mbox{for all $u\in \mathcal{G}_{r_{0}}$}.
$$
The proof of the claim follows now from \eqref{conv1}, \eqref{Alves} and last inequality.

\medskip

\noindent {\bf Claim 2:} $t_{u_{r_0+r_{n}}}(r_0 + d_{r_{n}} r_{n})\to 1$ as $n\to \infty$.

\medskip

From $(\Psi_4)$,
\begin{equation}\label{equ1}
\Psi(t_{u_{r_0+r_{n}}}(r_0 + d_{r_{n}} r_{n})u_{r_{0}+r_{n}})=r_{0}+d_{r_{n}}r_{n}.
\end{equation}
Moreover, by $(\Psi_1)$, $\{u_{r_{0}+r_{n}}\}$ is bounded for $n$ large enough. Since $\Psi(u_{r_{0}+r_{n}})=r_{0}+r_{n}$, $\{u_{r_{0}+r_{n}}\}$ is far away from the origin of $E$ and, consequently, $\{t_{u_{r_0+r_{n}}}(r_0 + d_{r_{n}} r_{n})\}$ is bounded in $\R$. Therefore, up to a subsequence,
\begin{equation}\label{conv2}
t_{u_{r_0+r_{n}}}(r_0 + d_{r_{n}} r_{n})\to t_{\ast}
\end{equation}
for some $t_{\ast}\geq 0$. From $(\Psi_1)$, $(\Psi_2)$, Claim 1, \eqref{equ1} and \eqref{conv2},
$$
\Psi(t_{\ast} u_{\ast})\leq \lim_{n\rightarrow \infty}\Psi(t_{u_{r_0+r_{n}}}(r_0 + d_{r_{n}} r_{n})u_{r_{0}+r_{n}})=\lim_{n\rightarrow \infty}(r_{0}+d_{r_{n}}r_{n})=r_{0}=\Psi(u_{\ast}).
$$
Showing that $t_{\ast} u_{\ast}\in \tilde{\mathcal{A}}_{r_{0}}$. On the other hand, since $h_{u_{\ast}}(t)=\Psi(tu_{\ast})$ is increasing (see proof of Lemma \ref{derivamos}), we conclude that $t_{\ast}\leq 1$. Next, we are going to prove that $(\Psi_5)$ implies $t_{\ast}=1$. In fact, it is a consequence of Claim 1, $(\Psi_5)$ and
$$
\Psi(u_{r_{0}+r_{n}})=r_{0}+r_{n}\to r_{0}=\Psi(u_{\ast}),
$$
that
\begin{equation}\label{conv3}
u_{r_{0}+r}\to u_{\ast} \ \mbox{in $E$}.
\end{equation}
From \eqref{equ1}, \eqref{conv2}, \eqref{conv3} and the continuity of $\Psi$, we get
$$
\Psi(t_{\ast}u_{\ast})=r_{0}.
$$
Since $u_{\ast}\in  \partial\tilde{\mathcal{A}}_{r_{0}}$, it follows from $(\Psi_4)$ that $t_{\ast}=1$, and the claim is proved.

\medskip

Hence, from Claims 1 and 2, 
\begin{equation}\label{agora3}
\limsup_{n \to \infty}\frac{\Phi'\left(t_{u_{r_0+r_{n}}}(r_0 + d_{r_{n}} r_{n})u_{r_0+r_{n}}\right)u_{r_0+r_{n}}}
{\Psi'\left(t_{u_{r_0+r_{n}}}(r_0 + d_{r_{n}} r_{n})u_{r_0+ r_{n}}\right)u_{r_0+r_{n}}} \leq \frac{\Phi'(u_{\ast})u_{\ast}}{\Psi'(u_{\ast})u_{\ast}}=\max_{u\in\mathcal{G}_{r_{0}}}\frac{\Phi'(u)u}{\Psi'(u)u}.
\end{equation}
Since $r_{n}\to 0^{+}$ is an arbitrary sequence, it follows from \eqref{agora}, \eqref{agora2} and \eqref{agora3} that $\varphi$ is differentiable, and
$$
\varphi'(r_{0})=\max_{u\in\mathcal{G}_{r_{0}}}\frac{\Phi'(u)u}{\Psi'(u)u}.
$$
The continuity of $\varphi'$ is also a straightforward consequence of $(\Psi_5)$.
\end{proof}

A careful analysis about the proof of Proposition \ref{provardif} shows us that if $(\Psi_5)$ is not required, we can still guarantee the existence of the right hand derivative of $\varphi$, more specifically, we can proof that
$$
\varphi'_{+}(r_{0})=\max_{u\in\mathcal{G}_{r_{0}}}\frac{\Phi'(u)u}{\Psi'(u)u}, \ \forall \ r_{0}>0,
$$ 
and 
$$
\limsup_{r\to 0^{-}}\frac{\varphi(r_{0}+r)-\varphi(r_{0})}{r}\leq \max_{u\in\mathcal{G}_{r_{0}}}\frac{\Phi'(u)u}{\Psi'(u)u}, \ \forall \ r_{0}>0.
$$

\begin{theorem}\label{TME}
Let $E$ be a reflexive Banach space and $J$ be a functional of class $(\mathcal{J})$ in $E$. A maximum energy type point $u_r\in \mathcal{G}_r$ is a critical point of the functional $J$, for some $r>0$ if, and only if, $r$ is a critical point of the energy function $\zeta$.
\end{theorem}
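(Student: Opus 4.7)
The identity $J' = \Psi' - \Phi'$ together with the formula $\varphi'(r) = \max_{u\in\mathcal{G}_r}\Phi'(u)u/\Psi'(u)u$ from Proposition \ref{provardif} reduces everything to a computation of the ratio $\Phi'(u_r)u_r/\Psi'(u_r)u_r$ at an energy maximum type point. The plan is to handle the two implications separately, the hard direction requiring a Lagrange-multiplier argument that is available thanks to hypothesis $(\Psi_3)$.

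\medskip

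\textbf{Necessity.} Suppose $u_r\in\mathcal{G}_r$ is an energy maximum type point with $J'(u_r)=0$. First, since $u_r\in\partial\tilde{\mathcal{A}}_r$ (by Proposition \ref{pro1.1}) and $u_r\neq 0$, consequence (3) in Section \ref{se:method} gives $\Psi'(u_r)u_r>0$. From $J'(u_r)=0$ we obtain $\Psi'(u_r)u_r=\Phi'(u_r)u_r$, hence $\Phi'(u_r)u_r/\Psi'(u_r)u_r=1$. Because $u_r$ is an energy maximum type point, this quotient equals $\varphi'(r)$ by Proposition \ref{provardif}. Therefore $\zeta'(r)=1-\varphi'(r)=0$, so $r$ is a critical point of $\zeta$.

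\medskip

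\textbf{Sufficiency.} Suppose $r>0$ is a critical point of $\zeta$, so $\varphi'(r)=1$. By Proposition \ref{pro1.5} the set $\mathcal{G}_r$ contains an energy maximum type point $u_r$; by Proposition \ref{pro1.1} this $u_r$ sits on the boundary $\partial\tilde{\mathcal{A}}_r=\{u\in E:\Psi(u)=r\}$ and maximizes $\Phi$ on $\tilde{\mathcal{A}}_r$, hence in particular on the level set $\partial\tilde{\mathcal{A}}_r$. Hypothesis $(\Psi_3)$ guarantees $\Psi'(u_r)u_r\neq 0$, which forces $\Psi'(u_r)\neq 0$ in $E^*$, so $\partial\tilde{\mathcal{A}}_r$ is a $C^1$ Banach submanifold of $E$ near $u_r$. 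The Lagrange multiplier theorem for constrained extrema thus supplies $\lambda\in\R$ with
\begin{equation*}
\Phi'(u_r) = \lambda\,\Psi'(u_r).
\end{equation*}
Evaluating on $u_r$ and using that $u_r$ is an energy maximum type point together with $\varphi'(r)=1$,
\begin{equation*}
\lambda \;=\; \frac{\Phi'(u_r)u_r}{\Psi'(u_r)u_r} \;=\; \max_{u\in\mathcal{G}_r}\frac{\Phi'(u)u}{\Psi'(u)u} \;=\; \varphi'(r) \;=\; 1.
\end{equation*}
Therefore $\Phi'(u_r)=\Psi'(u_r)$, i.e. $J'(u_r)=0$, finishing the proof.

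\medskip

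\textbf{Main obstacle.} The delicate step is the sufficiency direction: pointwise information $J'(u_r)u_r=0$ alone does not yield $J'(u_r)=0$, so one must exploit the constrained-maximization nature of $u_r$. The Lagrange multiplier argument is the natural device, but it relies on the manifold structure of $\partial\tilde{\mathcal{A}}_r$ at $u_r$, which is exactly what $(\Psi_3)$ is designed to provide. Everything else is a bookkeeping of the equality cases in Proposition \ref{provardif}.
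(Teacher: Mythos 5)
Your proof is correct and follows essentially the same route as the paper: both directions hinge on the identity $\varphi'(r)=\Phi'(u_r)u_r/\Psi'(u_r)u_r$ at an energy maximum type point (Proposition \ref{provardif}) combined with the Lagrange multiplier theorem on $\partial\tilde{\mathcal{A}}_r$, available via $(\Psi_3)$. The only stylistic difference is that you apply Lagrange multipliers once (to $\Phi$ constrained to $\partial\tilde{\mathcal{A}}_r$) and then deduce $J'(u_r)=\Psi'(u_r)-\Phi'(u_r)=0$ algebraically, whereas the paper applies Lagrange multipliers a second time to $J$ on the same constraint, introducing a second multiplier $\gamma_r=1-\lambda_r$; your version is a mild streamlining of the same argument.
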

\begin{proof}

Suppose that $u_r$ is a maximum energy type point. By Proposition \ref{pro1.1}, one has that $u_r$ is a maximum point of $\Phi$ restricted to $\partial\tilde{\mathcal{A}}_r$. From $(\Psi_3)$ and Lagrange multipliers theorem (see, for example, \cite[Proposition 14.3]{K}), there exists $\lambda_r\in\R$ such that
$$ \Phi'(u_r)v = \lambda_r\Psi'(u_r)v,
\hspace{0.1cm}\mbox{for all}\hspace{0.1cm}v\in E. 
$$
As $(\Psi_3)$ occurs, by choosing $v=u_r$, we obtain
\begin{equation}\label{lambdar}
\lambda_r = \frac{\Phi'(u_r)u_r}{\Psi'(u_r)u_r}.
\end{equation}
On the other hand, from \eqref{energyfunction}, the same function $u_r$ is also the minimum of the functional $J$ restricted to $\partial\tilde{\mathcal{A}}_r$. So, again, from $(\Psi_3)$ and Lagrange multipliers theorem, there exists $\gamma_r\in\R$ such that
\begin{equation}\label{fois}
J'(u_r)v = \gamma_r\Psi'(u_r)v,\,\,\mbox{for all}\,\,v\in E, 
\end{equation}
or equivalently,
$$ 
\Psi'(u_r)v - \Phi'(u_r)v = \gamma_r\Psi'(u_r)v,\,\,\mbox{for all}\,\,v\in E. 
$$
By choosing $v = u_r$ and dividing the expression by $\Psi'(u_r)u_r$, we arrive at
$$
1 - \frac{\Phi'(u_r)u_r}{\Psi'(u_r)u_r} = \gamma_r. 
$$
Finally, employing (\ref{lambdar}), we obtain the equality
\begin{equation}\label{gammar}
\gamma_r = 1 - \lambda_r.
\end{equation}
By Proposition \ref{provardif} and definition of $\zeta$, 
$$
\zeta'(r) = 1 - \frac{\Phi'(u_r)u_r}{\Psi'(u_r)u_r}. 
$$
In other words,
\begin{equation}\label{metp}
\zeta'(r) = \frac{J'(u_r)u_r}{\Psi'(u_r)u_r}. 
\end{equation}
Therefore, from equality above, if $u_r$ is a critical point of $J$, then $\zeta'(r) = 0$. Now suppose that $r$ is a critical point of $\zeta$, that is, $\zeta'(r) = 0$. Then
$$
\frac{\Phi'(u_r)u_r}{\Psi'(u_r)u_r} = 1. 
$$
Using (\ref{lambdar}), one gets
$$
\lambda_r = 1.
$$
Then, by (\ref{gammar}),
$$
\gamma_r = 0 
$$
which combines with \eqref{fois} to give 
$$
J'(u_r)v = 0,\,\mbox{for all}\, v\in E,
$$
showing that $u_r$ is a critical point for $J$. 
\end{proof}

\begin{remark}
A careful analysis of previous proof, more specifically, of equality \eqref{metp}, gives us a sufficient condition, in order a point in the Nehari set $\mathcal{N}=\{u\in E\backslash\{0\}: J'(u)u=0\}$ be a critical point of $J\in (\mathcal{J})$. In fact, if $u\in \mathcal{N}$ is a maximum energy type point in $\mathcal{G}_r$, then, by \eqref{metp}, we have $\zeta'(r)=0$. Now, from Theorem \ref{TME}, $u$ is a critical point of $J$.
\end{remark}

%{\color{red}Note that if the energy function $\zeta$ has no critical points, this does not ensure in the nonexistence of critical points for the functional $J$. In fact, it is possible that there exists critical points of $J$ which are not maximum energy type points.}

\subsection{Application to a semilinear problem}\label{subsection}

Let us consider the following class of semilinear problems
\begin{equation}\label{P2}
\left\{ \begin{array}{lll}
\displaystyle -\Delta u = f(x,u) &\mbox{in}&\Omega, \\
\hspace{0.7cm}u = 0 &\mbox{on}& \partial\Omega, \end{array} \right.
\end{equation}
where $\Omega\subset\R^N (N\geq 2)$ is a smooth bounded domain and $f:\Omega\times\R\rightarrow\R$ is a Carath\'eodory function.

\medskip

The classical result of Hammerstein \cite{Ham} proves that if $f$ satisfies a linear growth condition, and
$$
\limsup_{|t|\to\infty}2F(x,t)/t^2<\lambda_1, \ \mbox{uniformly in $x$},
$$
then problem \eqref{P2} has a solution, where $\lambda_1$ is the first eigenvalue of laplacian operator with Dirichlet boundary condition. Some years later, in 1986, Mawhin, Ward and Willem \cite{MWW} improve Hammerstein's result by allowing the previous upper limit to ``touch'' the first eigenvalue of laplacian, except on a set of positive measure, that is, the authors  consider that there exists $\theta\in L^{\infty}(\Omega)$ such that
\begin{equation}\label{MWW}
\limsup_{|t|\to\infty}2F(x,t)/t^2\leq \theta(x)\leq \lambda_1, \ \mbox{uniformly in $x$},
\end{equation}
with $\theta(x)<\lambda_1$ on subset of $\Omega$ of positive measure. The main goal of this section is to complement the works previously mentioned, by using the method (MEF). To be more precise, we are going to show that if $f$ is nonnegative in $(0, \infty)$ and positive for small values of $t>0$, then hypothesis \eqref{MWW} can be relaxed.

\medskip

In this section, for each $\beta\in L^{s}(\Omega)$ with $s>N/2$, $\lambda_{1}(\beta)$ stands for the first eigenvalue of 
\begin{equation}\label{Eigen}
\left\{ \begin{array}{lll}
\displaystyle -\Delta u = \lambda\beta(x)u &\mbox{in}&\Omega, \\
\hspace{0.7cm}u = 0 &\mbox{on}& \partial\Omega. \end{array} \right.
\end{equation}

\medskip
 
Function $f$ satisfies the following assumptions:

\begin{itemize}
\item[$(f_1)$] There exists $C > 0$ such that
$$ \vert f(x,t)\vert \leq C\left(1 + \vert t\vert^{p-1}\right),
\,\mbox{for all}\,t\in(0,\infty)\,\mbox{and a.e. in \,$\Omega$}, $$
with $1 < p < 2^{\ast}$ if $N\geq 3$ and $p > 1$ if $N=2$;

\medskip

\item[$(f_2)$] $f$ is nonnegative in $(0, \infty)$ and positive in $(0,\varepsilon)$, for some $\varepsilon>0$;

\medskip

\item[$(f_3)$] There exist functions $\alpha_{\ast}$, $\alpha^{\ast}$ and $\eta$ such that

\begin{eqnarray*}
\left\{ \begin{array}{lll}
\alpha^{\ast}, \eta\in L^{\infty}(\Omega) \ \mbox{and $\lambda_{1}(\alpha_\ast)<1\leq\lambda_{1}(\eta)$} &\mbox{if}& f(x, 0)=0, \\
\hspace{0.7cm} \eta\in L^{\infty}(\Omega) \mbox{ and $1\leq\lambda_{1}(\eta)$} &\mbox{if}& f(x, 0)>0, \end{array} \right.
\end{eqnarray*}
with
$$
\alpha_{\ast}(x)=\liminf_{t\rightarrow 0^+}2F(x,t)/t^{2}, \ \alpha^{\ast}(x)=\limsup_{t\rightarrow 0^+}2F(x,t)/t^{2}, \ \mbox{uniformly in $x$},
$$ 
and
$$
 \eta(x)=\limsup_{t\rightarrow\infty}2F(x,t)/t^2,  \ \mbox{uniformly in $x$}.
$$

%$$ 
%\limsup_{t\rightarrow\infty}\frac{F(x,t)}{t^2} < \frac{\lambda_1}{2} < \liminf_{t\rightarrow 0^+}\frac{F(x,t)}{t^{2}} \leq \limsup_{t\rightarrow 0^+}\frac{F(x,t)}{t^2}<+\infty
%\,\,\mbox{a.e. in}\,\, \Omega, 
%$$
\end{itemize}

\medskip

Some considerations are now necessary: 

\begin{enumerate}
\item Observe that, from $(f_2)$, the primitive $F^{\ast}(t)=\int_{0}^{t}f^{\ast}(x,s)\,ds$ is positive and nondecreasing for all $t>0$. Thus, $0\leq \alpha_{\ast}\leq \alpha^{\ast}$ and $\eta\geq 0$. Consequently, $(f_3)$ implies that $\alpha_{\ast}\in L^{\infty}(\Omega)$ and, by \cite{Djairo}, there exist positive first eigenvalues $\lambda_{1}(\alpha_\ast)$ and $\lambda_{1}(\eta)$. 

\medskip

\item If $f(x, 0)>0$, then the generalized L'Hospital rule implies that $\alpha_\ast=\alpha^\ast=\infty$. In fact, in this case
$$
\infty=\liminf_{t\rightarrow 0^+}f(x, t)/t\leq \liminf_{t\rightarrow 0^+}2F(x, t)/t^{2}=\alpha_{\ast}(x).
$$

\medskip

\item It is important to point out that inequality $1\leq \lambda_{1}(\eta)$ in $(f_3)$ is considerably weaker than \eqref{MWW}. In fact, it is a consequence of Proposition 1.12A in \cite{Djairo} that \eqref{MWW} implies $1<\lambda_{1}(\eta)$. On the other hand, there exist functions $\eta$ which satisfy $(f_3)$ and are greater than $\lambda_1$ in some subset $\Omega_{0}\subset\Omega$ with positive measure (and so, they are not covered by hypothesis \eqref{MWW} in \cite{MWW}). In effect, for each $k\in\N$, let us consider the set of positive measure
$$
\Omega_{k}=\{x\in\Omega: dist(x, \partial\Omega)<1/k\}
$$
and 
$$
\eta_{k}(x)=\frac{\lambda_1}{2} \chi_{\Omega\backslash\Omega_{k}(x)}+\frac{3\lambda_1}{2}\chi_{\Omega_{k}}(x),
$$
where $\chi_{A}$ denotes the characteristic function of a measurable set $A\subset\Omega$. It is clear that $\eta_k\in L^{\infty}(\Omega)$ and 
\begin{equation}\label{exem1}
\eta_{k}\to\eta_{\infty}=\frac{\lambda_1}{2} \ \mbox{in $L^{N/2}(\Omega)$}
\end{equation}
with 
\begin{equation}\label{exem2}
\lambda_{1}(\eta_{\infty})=2>1.
\end{equation} 
It follows from \eqref{exem1}, \eqref{exem2} and \cite{Djairo}, that
\begin{equation}\label{exem3}
\lambda_{1}(\eta_k)\to2.
\end{equation}
Thus, from  \eqref{exem3} there exists $k_{\ast}\in \N$ such that, defining $\eta_{\ast}:=\eta_{k_\ast}$, we have 
\begin{equation}\label{exem4}
\lambda_{1}(\eta_{\ast})>1.
\end{equation}
Now, consider for example the function
\begin{equation}
f(x, t)=\left\{ \begin{array}{lll}
-f(x, -t), &\mbox{if}& t < 0, \\
\eta_{\ast}(x)t^{2} &\mbox{if}& 0\leq t<1/4, \\
\eta_{\ast}(x)\frac{t^{3}}{(3/16)+t^{2}} &\mbox{if}& t\geq 1/4. \end{array} \right.
\end{equation}
We have that $f$ satisfies $(f_1)-(f_3)$ with $\alpha_\ast=\alpha^\ast=0$ and $\eta=\eta_{\ast}$ (see \eqref{exem4}). Moreover,
$$
\eta(x)=\frac{3\lambda_1}{2}>\lambda_1 \ \mbox{in $\Omega_{k_{\ast}}$}.
$$ 
\end{enumerate}

\medskip

In order to find a solution for \eqref{P2}, we will first prove the existence of a solution to the auxiliary problem
\begin{equation}\label{P2ast}
\left\{ \begin{array}{lll}
\displaystyle -\Delta u = f^{\ast}(x,u) &\mbox{in}&\Omega, \\
\hspace{0.7cm}u = 0 &\mbox{on}& \partial\Omega, \end{array} \right.
\end{equation}
where
\begin{equation}
f^{\ast}(x,t) = \left\{ \begin{array}{lll}
f(x, 0), &\mbox{if}& t < 0, \\
f(x,t), &\mbox{if}& t \geq 0. \end{array} \right. 
\end{equation}
For sure, $f^{\ast}$ satisfies $(f_1)-(f_3)$. 

\medskip

The functional $J$ associated with the problem (\ref{P2ast}) is given by
$$ J(u) = \Psi(u) - \Phi(u), $$
where
$$ \Psi(u) = \frac{1}{2}\Vert u\Vert^2\hspace{0.5cm}
\mbox{and}\hspace{0.5cm}\Phi(u) = \int_{\Omega}F^{\ast}(x,u)dx. $$

Now, we are going to show that $J$ belongs to class $(\mathcal{J})$ in $H_{0}^{1}(\Omega)$. In fact, by norm properties, hypothesis $(\Psi_1)$ is clearly satisfied. Moreover, by considering the change of variable $r = \frac{1}{2}\rho^2$, we get $\tilde{\mathcal{A}}_r = B_{\rho}$ and $\partial\tilde{\mathcal{A}}_r = S_{\rho}$, where $B_{\rho}$ and $S_{\rho}$ denote, respectively, the closed ball and the sphere of radius $\rho$ and centered at the origin of $H_{0}^{1}(\Omega)$. Thus, the hypotheses $(\Psi_2)-(\Psi_3)$ are certainly satisfied, $(\Psi_4)$ holds with $t_{u}(r)=r/\|u\|$, and $(\Psi_5)$ follows from the uniform convexity of $H_{0}^{1}(\Omega)$. 

\medskip

In the next lemmas, we prove that $\Phi$ verifies $(\Phi_1)-(\Phi_3)$.

\medskip

\begin{lemma}\label{lemaphi1} The functional $\Phi$ satisfies $(\Phi_1)$.
\end{lemma}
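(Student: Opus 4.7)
The plan is to verify both parts of $(\Phi_1)$ for $\Phi(u)=\int_\Omega F^*(x,u)\,dx$ on $E=H_0^1(\Omega)$: weak upper semicontinuity of $\Phi$ itself, and of the Rayleigh-type quotient $u\mapsto \Phi'(u)u/\Psi'(u)u=\int_\Omega f^*(x,u)u\,dx/\|u\|^2$ on $H_0^1(\Omega)\setminus\{0\}$. The engine is the compactness of the Sobolev embedding $H_0^1(\Omega)\hookrightarrow L^q(\Omega)$ for $q\in[1,2^*)$ (and for any $q<\infty$ when $N=2$), which, combined with the subcritical growth $(f_1)$, will upgrade weak upper semicontinuity to weak sequential continuity of the two integral functionals, leaving only the weakly lower semicontinuous denominator $\|u\|^2$ to be handled.

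For $\Phi$, given $u_n\rightharpoonup u_0$ in $H_0^1(\Omega)$, Rellich--Kondrachov yields $u_n\to u_0$ in $L^p(\Omega)$ and, up to a subsequence, $u_n\to u_0$ a.e.\ in $\Omega$ with a common majorant $h\in L^p(\Omega)$. Integrating $(f_1)$ gives $|F^*(x,t)|\le C(|t|+|t|^p)$, which supplies the $L^1$-majorant $C(h+h^p)$; Lebesgue's dominated convergence theorem then delivers $\Phi(u_n)\to \Phi(u_0)$ along the subsequence. A standard subsequence-of-a-subsequence argument propagates the convergence to the full sequence, giving weak continuity and a fortiori weak u.s.c.\ of $\Phi$.

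For the quotient I first establish weak continuity of the numerator $u\mapsto \int_\Omega f^*(x,u)u\,dx$. By $(f_1)$ the Nemytskii operator $u\mapsto f^*(\cdot,u)$ maps $L^p(\Omega)$ continuously into $L^{p'}(\Omega)$, with $p'=p/(p-1)$, so the strong convergence $u_n\to u_0$ in $L^p(\Omega)$ coming from compactness implies $f^*(\cdot,u_n)\to f^*(\cdot,u_0)$ in $L^{p'}(\Omega)$; Hölder's inequality then yields $\int_\Omega f^*(x,u_n)u_n\,dx\to \int_\Omega f^*(x,u_0)u_0\,dx$. Combined with the reflexivity-based bound $\liminf_n\|u_n\|^2\ge\|u_0\|^2>0$ at $u_0\ne 0$, along any subsequence with $\|u_{n_k}\|^2\to B\ge\|u_0\|^2$ the quotient converges to $\Phi'(u_0)u_0/B$, and weak upper semicontinuity reduces to the inequality $\Phi'(u_0)u_0/B\le \Phi'(u_0)u_0/\|u_0\|^2$, which is immediate when $\Phi'(u_0)u_0\ge 0$.

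The subtle point, which I expect to be the main obstacle, is the possible sign of $\Phi'(u_0)u_0$: since $f^*(x,t)=f(x,0)$ for $t<0$, the integrand $f^*(x,u)u$ contributes negatively on $\{u_0<0\}$ when $f(x,0)>0$. I plan to handle this by splitting
\[
\Phi'(u)u=\int_{\{u>0\}}f(x,u)u\,dx+\int_{\{u<0\}} f(x,0)u\,dx,
\]
so that the first piece has a nonnegative weakly continuous numerator (to which the direct quotient estimate applies) and the second piece, being linear in $u_-$, combined with the strong $L^1$-convergence $u_n\to u_0$ yields a term that can be incorporated into the upper bound for the full ratio. Once this sign analysis is settled and the two pieces are recombined, one obtains $\limsup_n h(u_n)\le h(u_0)$, completing the verification of $(\Phi_1)$.
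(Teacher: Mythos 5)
Your weak-continuity argument for $\Phi$ (Rellich--Kondrachov plus dominated convergence along an a.e.\ convergent, dominated subsequence) matches the paper's, and your route for the numerator $u\mapsto\Phi'(u)u$ via the Nemytskii operator and H\"older is an equivalent variant of the paper's reuse of the same dominated convergence argument; both correctly give weak continuity of these two maps. The genuine issue lies in the final division step, which you correctly flag as the subtle point but do not close.

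Your proposed split $\Phi'(u)u=\int_{\{u>0\}}f(x,u)u\,dx+\int_{\{u<0\}}f(x,0)u\,dx$ does not repair the sign problem: both pieces are weakly continuous, but the second is $\le 0$, and a nonpositive weakly continuous numerator divided by the weakly l.s.c.\ positive denominator $\|u\|^2$ is weakly \emph{lower}, not upper, semicontinuous; recombining the two pieces therefore cannot yield the desired $\limsup$ inequality. Concretely, if $f(x,0)>0$ and $u_0\le 0$, $u_0\not\equiv 0$, then $\Phi'(u_0)u_0=\int_\Omega f(x,0)u_0\,dx<0$; taking $u_n=u_0+w_n$ with $w_n\rightharpoonup 0$, $\|w_n\|\equiv c>0$, $\langle u_0,w_n\rangle\to 0$, one gets $\Phi'(u_n)u_n\to\Phi'(u_0)u_0$ while $\|u_n\|^2\to\|u_0\|^2+c^2$, so the quotient converges to a value strictly larger than $\Phi'(u_0)u_0/\|u_0\|^2$, and weak u.s.c.\ fails. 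The paper's own proof simply asserts that $u\mapsto\Phi'(u)u$ is nonnegative, which is clear when $f(x,0)=0$ but is exactly the point you raise when $f(x,0)>0$; what keeps the paper's downstream use sound is that every sequence to which the quotient's weak u.s.c.\ is actually applied lies in some $\mathcal{G}_r$, whose elements are nonnegative by Lemma \ref{lemaposit}, and on that cone $\Phi'(u)u\ge 0$ so the division argument works. So to be airtight your proposal should either restrict to nonnegative sequences or assume $f(x,0)=0$ (as in Section \ref{se:app}); the decomposition as sketched does not substitute for that restriction.
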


\begin{proof}
Suppose that
$$
u_n \rightharpoonup u\hspace{0.2cm}\mbox{in}\hspace{0.2cm}H_0^1(\Omega). 
$$
By compact embeddings, up to a subsequence, one has 
$$ 
u_n \rightarrow u \hspace{0.2cm}\mbox{in}\hspace{0.2cm}L^p(\Omega),
$$
$$
u_n(x) \to u(x) \quad \mbox{a.e. in} \quad \Omega,
$$
and there exists $g \in L^{p}(\Omega)$ such that
$$
|u_n| \leq g(x), \quad \mbox{a.e. in} \quad \Omega. 
$$
Therefore, 
\begin{equation}\label{2.5}
F^{\ast}(x,u_n(x)) \rightarrow F^{\ast}(x,u(x))\hspace{0.2cm}\mbox{a.e.}\hspace{0.1cm}\mbox{in}
\hspace{0.2cm}\Omega,
\end{equation}
and by $(f_1)$, 
$$ 
\vert F^{\ast}(x,u_n(x))\vert \leq C\left(\vert u_n(x)\vert + \frac{1}{p}\vert u_n(x)\vert^p\right) \leq C\left(g(x) + \frac{1}{p} g(x)^p\right)
\quad \mbox{a.e. in} \quad \Omega. 
$$
Hence, by  Lebesgue's dominated convergence theorem 
$$
\Phi(u_n) = \int_{\Omega} F^{\ast}(x,u_n)dx \rightarrow \int_{\Omega} F^{\ast}(x,u)dx = \Phi(u), 
$$
showing that $\Phi$ is weakly continuous. Since $u\mapsto \Phi'(u)u$ is also continuous and nonnegative, $\Psi'(u)u=\|u\|^{2}$ is positive and weakly lower semicontinuous, we conclude that $u\mapsto \Phi'(u)u/\Psi'(u)u$ is weakly upper semicontinuous.

\end{proof}

\begin{lemma}\label{lemaphi2} The functional $\Phi$ satisfies $(\Phi_2)$.
\end{lemma}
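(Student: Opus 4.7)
The strategy is to show that any $u\in H^{1}_{0}(\Omega)$ with $\Phi'(u)=0$ must satisfy $u\leq 0$ a.e.\ in $\Omega$; once this is in hand, the explicit formula $F^{\ast}(x,u)=u\,f(x,0)$ on $\{u\leq 0\}$, combined with $f(\cdot,0)=0$ a.e.\ on $\{u<0\}$ (established below), will force $\Phi(u)=0\leq 0$.

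To begin, since $\Phi'(u)v=\int_{\Omega}f^{\ast}(x,u)v\,dx$ for every $v\in H^{1}_{0}(\Omega)$, a density argument yields the pointwise identity $f^{\ast}(x,u(x))=0$ for a.e.\ $x\in\Omega$. Splitting $\Omega$ as $\{u<0\}\cup\{u=0\}\cup\{u>0\}$ and reading off the definition of $f^{\ast}$, this rewrites as $f(x,0)=0$ a.e.\ on $\{u\leq 0\}$ and $f(x,u(x))=0$ a.e.\ on $\{u>0\}$. By $(f_2)$, $f(x,\cdot)$ is strictly positive on $(0,\varepsilon)$, so the latter identity forces $u(x)\geq\varepsilon$ a.e.\ on $\{u>0\}$; equivalently, $u$ avoids the interval $(0,\varepsilon)$ modulo a null set.

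The crucial step is then a Stampacchia-type truncation to rule out $|\{u>0\}|>0$. Let $\phi:\R\to\R$ be the Lipschitz function $\phi(t)=\min\{\max\{t,0\},\varepsilon\}$; since $\phi(0)=0$, the standard chain rule for Sobolev functions gives $\phi\circ u\in H^{1}_{0}(\Omega)$ with $\nabla(\phi\circ u)=\phi'(u)\nabla u$ a.e. Because $\phi'$ is supported in $(0,\varepsilon)$ and $|u^{-1}((0,\varepsilon))|=0$ by the previous step, $\nabla(\phi\circ u)=0$ a.e., so Poincar\'e's inequality forces $\phi\circ u\equiv 0$, hence $u\leq 0$ a.e. The conclusion $\Phi(u)=0$ is then immediate from the first paragraph. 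The same argument covers the case $f(\cdot,0)>0$: there one must have $|\{u\leq 0\}|=0$ and hence $u\geq\varepsilon$ a.e., which is contradicted by the truncation, so no critical point of $\Phi$ exists and $(\Phi_2)$ holds vacuously. The only technical ingredient, namely the chain rule for a Lipschitz composition vanishing at zero combined with the membership $\phi\circ u\in H^{1}_{0}(\Omega)$, is entirely standard (Marcus--Mizel), and I do not expect any real obstacle beyond keeping careful track of the measure-zero sets.
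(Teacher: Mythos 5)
Your proof is correct and, in its skeleton, matches the paper's: both start from $\Phi'(u)=0$, invoke the fundamental lemma of the calculus of variations (the paper cites Brezis, Corollary 4.24) to get $f^{\ast}(x,u(x))=0$ a.e., deduce $u\leq 0$ a.e.\ from $(f_2)$, and then read off the sign of $F^{\ast}(x,u)$. The genuine difference is that you supply a justification the paper leaves tacit. Since $(f_2)$ only asserts positivity of $f(x,\cdot)$ on $(0,\varepsilon)$ (not on all of $(0,\infty)$), the identity $f(x,u(x))=0$ on $\{u>0\}$ only tells you directly that $u$ avoids $(0,\varepsilon)$; it does not by itself exclude a positive-measure set where $u\geq\varepsilon$. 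Your Marcus--Mizel/Stampacchia truncation $\phi(t)=\min\{\max\{t,0\},\varepsilon\}$, which has vanishing gradient a.e.\ precisely because $\phi'$ is carried by $(0,\varepsilon)$ and $|u^{-1}((0,\varepsilon))|=0$, closes that loophole cleanly via Poincar\'e, and it also handles the $f(\cdot,0)>0$ branch (where $(\Phi_2)$ ends up holding vacuously). The paper compresses all of this into the single phrase ``by $(f_2)$, we must have $u\leq 0$'' and then concludes with the slightly weaker bound $F^{\ast}(x,u)\leq 0$ rather than your exact equality $\Phi(u)=0$ (which uses the additional observation that $f(\cdot,0)=0$ a.e.\ on $\{u\leq 0\}$); both suffice for $(\Phi_2)$. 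In short: same route, but you made one real step explicit that the paper elides, and that step is worth having on the record.
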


\begin{proof}
Suppose that $\Phi'(u) = 0$, that is,
$$
\int_{\Omega}f^{\ast}(x,u)v dx = 0,\hs\mbox{for all}\hs v\in H_0^1(\Omega). 
$$
By \cite[Corollary 4.24]{Br}, $f^{\ast}(x,u) = 0$ a.e. in $\Omega$, and so, by ($f_2$), we must have $u\leq 0$. From this, $F^{\ast}(x,u)\leq 0$ and $\Phi$ verifies ($\Phi_2$).

\end{proof}

\begin{lemma}\label{lemaphi3} The functional $\Phi$ satisfies $(\Phi_3)$.
\end{lemma}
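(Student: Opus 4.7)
The plan is to build an explicit sequence in $H_0^1(\Omega)$ that tends to $0$ in norm while making $\Phi$ strictly positive. The key observation is that $(f_2)$ forces $F^{\ast}(x,t)>0$ on a nontrivial interval. Indeed, since $f^{\ast}(x,s)=f(x,s)\geq 0$ for $s\in (0,\infty)$ and $f(x,s)>0$ for $s\in (0,\varepsilon)$ (a.e.\ in $\Omega$), one has
$$
F^{\ast}(x,t)=\int_{0}^{t}f^{\ast}(x,s)\,ds>0,\qquad \text{for every }t\in (0,\varepsilon), \text{ a.e. in }\Omega.
$$
This is the only structural fact from $(f_2)$ that I will need.

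Next I would fix any nontrivial nonnegative test function $\varphi \in C_{c}^{\infty}(\Omega)$ with $\varphi \not\equiv 0$, and let $K=\mathrm{supp}\,\varphi$, which has positive measure. Setting $u_{n}:=\varphi/n$, clearly $u_{n}\to 0$ in $H_{0}^{1}(\Omega)$ (since $\|u_n\|=\|\varphi\|/n$). Because $\varphi\in L^{\infty}(\Omega)$, for every $n$ large enough (say $n\geq n_{0}$ with $n_{0}>\|\varphi\|_{\infty}/\varepsilon$) we have
$$
0\leq u_{n}(x)<\varepsilon \ \text{ for every } x\in\Omega,\quad\text{and}\quad u_{n}(x)>0 \ \text{ for every } x\in\{\varphi>0\}\subset K.
$$
Therefore $F^{\ast}(x,u_{n}(x))\geq 0$ everywhere in $\Omega$, with $F^{\ast}(x,u_{n}(x))>0$ on the set $\{\varphi>0\}$ of positive measure. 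Consequently
$$
\Phi(u_{n})=\int_{\Omega}F^{\ast}(x,u_{n})\,dx\geq \int_{\{\varphi>0\}}F^{\ast}(x,u_{n})\,dx>0,\qquad\text{for all }n\geq n_{0}.
$$

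Finally, I would relabel the tail $\{u_{n}\}_{n\geq n_{0}}$ as a sequence indexed over $\mathbb{N}$ (or simply truncate the original indexing), obtaining $u_{n}\to 0$ in $H_{0}^{1}(\Omega)$ with $\Phi(u_{n})>0$ for every $n$. This is exactly $(\Phi_{3})$. I do not foresee any real obstacle here, since everything reduces to the elementary sign information coming from $(f_2)$; the only point that warrants a small check is the pointwise bound $u_{n}<\varepsilon$, which is why choosing $\varphi\in C_{c}^{\infty}(\Omega)$ (hence bounded) is convenient.
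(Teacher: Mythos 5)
Your argument is correct and follows essentially the same approach as the paper: take a positive function and scale it by $1/n$ to obtain a sequence tending to $0$ with $\Phi>0$ along it. The only difference is that you restrict to $\varphi\in C_c^\infty(\Omega)$ and $n$ large to keep $u_n<\varepsilon$, which is an unnecessary precaution: since $f$ is nonnegative on $(0,\infty)$ and strictly positive on $(0,\varepsilon)$, one has $F^*(x,t)=\int_0^t f^*(x,s)\,ds>0$ for \emph{every} $t>0$, so any positive $v\in H_0^1(\Omega)$ works and no $L^\infty$ control or tail relabeling is needed.
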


\begin{proof}

Since $F^{\ast}(x,t)$ is positive in $\Omega\times(0,\infty)$, it is enough to choose a positive function $v\in H_0^1(\Omega)$ and define $u_n = (1/n)v$. It is clear that $u_{n}\to 0$ in $H_{0}^{1}(\Omega)$ and $\Phi(u_{n})>0$ for all $n\in\N$.

\end{proof}

The next result ensures that any solution for (\ref{P2ast}) is also a solution for \eqref{P2}.

\begin{lemma}\label{lemaposit}
Let $r = \frac{1}{2}\rho^2 $. If
$$ u_0 \in \mathcal{G}_r = \left\lbrace u\in B_{\rho};
\hspace{0.1cm}\Phi(u) = \max_{u\in S_{\rho}}\int_{\Omega}F^{\ast}(x,u)dx\right\rbrace, $$
then $u_0\geq 0$ a.e.$\hs$in $\Omega$.
\end{lemma}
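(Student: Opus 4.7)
The plan is to show that $u_{0}$ cannot gain $\Phi$-value from its negative part; in fact, replacing $u_{0}$ by $|u_{0}|$ only increases $\Phi$ while preserving the constraint, and exploiting $(f_{2})$ will force the negative part to vanish.

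First I would establish the pointwise inequality $F^{\ast}(x,|u_{0}(x)|)\geq F^{\ast}(x,u_{0}(x))$ for a.e.\ $x\in\Omega$. Indeed, from the definition of $f^{\ast}$ one has $F^{\ast}(x,t)=\int_{0}^{t}f(x,0)\,ds=f(x,0)\,t\leq 0$ for $t<0$ (since $f(x,0)\geq 0$ by $(f_{2})$), while $F^{\ast}(x,t)=\int_{0}^{t}f(x,s)\,ds\geq 0$ for $t\geq 0$. In both cases, $F^{\ast}(x,|t|)\geq F^{\ast}(x,t)$.

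Next I would use that taking absolute values is an isometry on $H_{0}^{1}(\Omega)$: $|u_{0}|\in H_{0}^{1}(\Omega)$ with $\||u_{0}|\|=\|u_{0}\|=\rho$, so $|u_{0}|\in S_{\rho}=\partial\tilde{\mathcal{A}}_{r}$. Since $u_{0}\in\mathcal{G}_{r}$, Proposition \ref{pro1.1} combined with the very definition of $\mathcal{G}_{r}$ gives
$$
\Phi(u_{0})=\max_{u\in S_{\rho}}\Phi(u)\geq \Phi(|u_{0}|)=\int_{\Omega}F^{\ast}(x,|u_{0}|)\,dx\geq \int_{\Omega}F^{\ast}(x,u_{0})\,dx=\Phi(u_{0}),
$$
so all inequalities are equalities and, by the pointwise inequality above, $F^{\ast}(x,|u_{0}(x)|)=F^{\ast}(x,u_{0}(x))$ a.e.\ in $\Omega$.

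Finally, I would argue that this forces $u_{0}\geq 0$ a.e. Let $A=\{x\in\Omega:u_{0}(x)<0\}$. For $x\in A$, the previous identity reads $\int_{0}^{-u_{0}(x)}f(x,s)\,ds=f(x,0)u_{0}(x)\leq 0$. But for $x\in A$ and $s\in(0,\min\{-u_{0}(x),\varepsilon\})$, hypothesis $(f_{2})$ gives $f(x,s)>0$, whence $\int_{0}^{-u_{0}(x)}f(x,s)\,ds>0$ on $A$. This contradiction forces $|A|=0$, hence $u_{0}\geq 0$ a.e.\ in $\Omega$. The main obstacle, and the only place delicacy is needed, is ensuring that the positivity from $(f_{2})$ really kicks in on $A$; this is why I need to integrate over the truncated interval $(0,\min\{-u_{0}(x),\varepsilon\})$ rather than all of $(0,-u_{0}(x))$.
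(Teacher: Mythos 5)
Your proof is correct and follows essentially the same route as the paper's. The paper argues directly by contradiction: assuming $|\{u_0<0\}|>0$, it asserts the strict inequality $\int_{[u_0<0]}F^{\ast}(x,u_0)\,dx < \int_{[u_0<0]}F^{\ast}(x,|u_0|)\,dx$ (justified by the same pointwise comparison you carry out) and hence $\Phi(u_0)<\Phi(|u_0|)$, contradicting maximality. You run the contrapositive: obtain the equality chain $\Phi(u_0)\geq\Phi(|u_0|)\geq\Phi(u_0)$, force pointwise equality of $F^{\ast}(x,|u_0|)$ and $F^{\ast}(x,u_0)$, and then use $(f_2)$ to rule out a negative part. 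The ingredients ($|u_0|\in S_\rho$ by $|\nabla|u_0||=|\nabla u_0|$, the sign of $F^{\ast}$ on each half-line, and strict positivity of $F^{\ast}$ for $t>0$ via $(f_2)$) coincide; your version simply spells out the step the paper leaves implicit, namely why the integral inequality over $[u_0<0]$ is strict.
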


\begin{proof}
Supposing by contraction that $u_0\not\equiv\vert u_0\vert$, then we must have $|[u_0<0]>0$. Since 
$$
\int_{[u_0<0]}F^{\ast}(x,u_0)dx < \int_{[u_0<0]}F^{\ast}(x,\vert u_0\vert)dx
$$
and 
$$
\int_{[u_0 \geq 0]}F^{\ast}(x,u_0)dx = \int_{[u_0 \geq 0]}F^{\ast}(x,\vert u_0\vert)dx,
$$
it follows that
$$ \Phi(u_0)= \int_{\Omega}F^{\ast}(x,u_0)dx < \int_{\Omega}F^{\ast}(x,\vert u_0\vert)dx=\Phi(|u_0|), $$
with $|u_{0}|\in S_{\rho}$. Since $u_{0}$ maximizes $\Phi$ on $S_{\rho}$, we get a contradiction. Therefore, $u_0 = \vert u_0\vert$.
\end{proof}

\medskip

\begin{theorem}
Under hypotheses $(f_1)-(f_3)$, problem \eqref{P2} has a nonnegative and nontrivial weak solution.
\end{theorem}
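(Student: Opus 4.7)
The plan is to apply the method of the energy function (MEF) to the auxiliary problem \eqref{P2ast}. All the ingredients have already been prepared: Lemmas \ref{lemaphi1}--\ref{lemaphi3} together with the observations on $\Psi(u)=\tfrac12\|u\|^2$ show that $J=\Psi-\Phi$ belongs to $(\mathcal{J})$ on $H_0^1(\Omega)$, so by Proposition \ref{provardif} the energy function $\zeta(r)=r-\varphi(r)$ is continuous on $[0,\infty)$ and $C^1$ on $(0,\infty)$, with $\zeta(0)=0$ (since $F^\ast(x,0)\equiv 0$). By Theorem \ref{TME}, any critical point $r_0>0$ of $\zeta$ produces an energy maximum type point $u_{r_0}\in\mathcal{G}_{r_0}$ with $J'(u_{r_0})=0$ and $\|u_{r_0}\|^2=2r_0>0$. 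It therefore suffices to exhibit a positive critical point of $\zeta$, which I would obtain by proving $\zeta$ attains its global minimum at some $r_0>0$.

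\textbf{Step 1: $\zeta$ dips below zero near the origin.} Assume first $f(\cdot,0)\equiv 0$. Let $\psi_1>0$ be a first eigenfunction of \eqref{Eigen} associated to $\lambda_1(\alpha_*)<1$, so that $\|\psi_1\|^2=\lambda_1(\alpha_*)\int_\Omega \alpha_*\psi_1^2$. Applying Fatou's lemma to the nonnegative integrand $F^\ast(x,t\psi_1)/t^2$ and using the pointwise inequality $\liminf_{t\to 0^+}2F^\ast(x,t\psi_1)/(t\psi_1)^2\geq\alpha_*(x)$, I get
\[
\liminf_{t\to 0^+}\frac{\Phi(t\psi_1)}{\tfrac12\|t\psi_1\|^2}\;\geq\;\frac{\int_\Omega\alpha_*\psi_1^2}{\|\psi_1\|^2}=\frac{1}{\lambda_1(\alpha_*)}>1.
\]
Consequently $\Phi(t\psi_1)>\tfrac12\|t\psi_1\|^2$ for every small $t>0$, which at $r=\tfrac12\|t\psi_1\|^2$ gives $\varphi(r)>r$, i.e.\ $\zeta(r)<0$. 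When $f(\cdot,0)>0$, the second observation after $(f_3)$ yields $\alpha_*\equiv\infty$, and the same argument with any positive test function works.

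\textbf{Step 2: coercivity of $\zeta$ at infinity.} From $(f_1)$ and the definition of $\eta$, for each $\varepsilon>0$ there exists $C_\varepsilon>0$ such that $F^\ast(x,t)\leq\tfrac12(\eta(x)+\varepsilon)t^2+C_\varepsilon$ uniformly in $x$ and $t$. Using the variational characterization $\int_\Omega\eta u^2\leq\|u\|^2/\lambda_1(\eta)$ together with Poincar\'e,
\[
\Phi(u)\;\leq\;\frac{1}{2\lambda_1(\eta)}\|u\|^2+\frac{\varepsilon}{2\lambda_1}\|u\|^2+C_\varepsilon|\Omega|.
\]
When $\lambda_1(\eta)>1$, picking $\varepsilon$ small yields $\Phi(u)\leq (1-\kappa)\tfrac12\|u\|^2+C$ for some $\kappa>0$, whence $\zeta(r)\geq\kappa r-C\to+\infty$. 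The limit case $\lambda_1(\eta)=1$ is more delicate and constitutes the main technical obstacle: here I would argue by contradiction, taking a sequence $r_n\to\infty$ with $\zeta(r_n)$ bounded, normalizing the associated maximizers $v_n=u_{r_n}/\|u_{r_n}\|$ (so $\|v_n\|=1$), extracting $v_n\rightharpoonup v_\infty$ weakly and strongly in $L^2(\Omega)$, and using that the limsup in $(f_3)$ is attained only asymptotically to contradict the eigenvalue identity on the set $\{v_\infty\neq 0\}$ via Fatou's lemma applied to $F^\ast(x,u_{r_n})/u_{r_n}^2$.

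\textbf{Step 3: conclusion.} From Steps 1--2 the continuous function $\zeta$ is negative somewhere on $(0,\infty)$ and tends to $+\infty$, hence the sublevel set $\{r\geq 0:\zeta(r)\leq 0\}$ is compact and $\zeta$ attains a global minimum at some $r_0>0$, where $\zeta'(r_0)=0$. Theorem \ref{TME} yields an energy maximum type point $u_{r_0}\in\mathcal{G}_{r_0}$ with $J'(u_{r_0})=0$, i.e., a nontrivial weak solution of \eqref{P2ast}. Lemma \ref{lemaposit} ensures $u_{r_0}\geq 0$ a.e.\ in $\Omega$, and since $f^\ast(x,t)=f(x,t)$ for $t\geq 0$, $u_{r_0}$ is a nonnegative and nontrivial weak solution of the original problem \eqref{P2}, finishing the proof.
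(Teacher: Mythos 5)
Your overall architecture matches the paper's: show the energy function $\zeta$ (equivalently $\sigma(\rho)=\zeta(\tfrac12\rho^2)$) dips below zero near the origin, control it at infinity, extract an interior critical point $r_\ast>0$, then invoke Theorem \ref{TME} and Lemma \ref{lemaposit}. Step 1 is essentially identical to the paper's argument (Fatou plus the $L'$Hospital observation when $f(\cdot,0)>0$), and Step 3 is fine modulo one small point noted below. The problem is Step 2.

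Your Step 2 genuinely proves only the case $\lambda_1(\eta)>1$, where the elementary estimate
$\Phi(u)\le(1-\kappa)\Psi(u)+C$ gives true coercivity $\zeta(r)\ge\kappa r-C$. But hypothesis $(f_3)$ allows $\lambda_1(\eta)=1$, and that borderline case is exactly what makes this theorem an improvement of Mawhin--Ward--Willem: under \eqref{MWW} one automatically has the strict inequality $1<\lambda_1(\eta)$. You acknowledge the limit case is ``more delicate'' and only sketch a contradiction argument with normalized maximizers and Fatou. That sketch is the right idea, but you have not carried it out, and it is where the substance of the proof lies. The paper does precisely this, in a unified way: it takes $\rho_n\to\infty$, $v_n$ with $\|v_n\|=1$ and $\rho_n v_n\in\mathcal{G}_{(1/2)\rho_n^2}$ (nonnegative by Lemma \ref{lemaposit}), extracts $v_n\rightharpoonup v_0$, uses the uniform bound \eqref{v0cz} on $F^\ast(x,t)/t^2$ to dominate the integrand, applies the reverse Fatou lemma on the set $[v_0>0]$ with $\chi_{[v_n>0]}\to 1$, and concludes
$\liminf_n \sigma(\rho_n)/\rho_n^2\ge \tfrac12-\tfrac12\int_\Omega \eta\, v_0^2\ge 0$, whence $\sigma$ is eventually bounded below by $-\varepsilon$ for any $\varepsilon>0$. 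No coercivity is needed (nor available in general when $\lambda_1(\eta)=1$).

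A related minor issue in Step 3: without coercivity the sublevel set $\{r\ge0:\zeta(r)\le0\}$ need not be compact. The correct statement is that once $\zeta(r_0)=-2\delta<0$ for some $r_0$ and $\sigma(\rho)>-\delta$ for $\rho$ large, the set $\{r:\zeta(r)\le-\delta\}$ is a nonempty compact subset of $(0,\infty)$, so the global minimum is attained at an interior point $r_\ast$ with $\zeta'(r_\ast)=0$. With the paper's (weaker) estimate in Step 2, you must formulate Step 3 this way rather than via $\{\zeta\le0\}$.

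Also, note your formula $\Phi(u)\leq(1-\kappa)\tfrac12\|u\|^2+C$ produces $\zeta(r)\geq \kappa r - C$, which is only valid provided the intermediate step $\tfrac12\int(\eta+\varepsilon)u^2\leq\left(\tfrac{1}{\lambda_1(\eta)}+\tfrac{\varepsilon}{\lambda_1}\right)\tfrac12\|u\|^2$ is justified by the variational characterizations $\int\eta u^2\le\|u\|^2/\lambda_1(\eta)$ and $\int u^2\le\|u\|^2/\lambda_1$; that part is fine, but it structurally cannot be pushed to $\lambda_1(\eta)=1$ by shrinking $\varepsilon$, which is why the normalized-maximizer argument is unavoidable and should have been completed.
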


\begin{proof}
Since $J$ is of class $(\mathcal{J})$ in $H_{0}^{1}(\Omega)$, the energy function $\zeta:[0,\infty)\rightarrow\R$ is given by
$$
\zeta(r) = r - \max_{u\in \partial\tilde{\mathcal{A}}_{r}}\int_{\Omega}F^{\ast}(x,u)dx.
$$
Let $\sigma(\rho)=\zeta((1/2)\rho^{2})$. Then,
$$
\sigma(\rho) = \frac{1}{2}\rho^2 - \max_{u\in S_{1}}\int_{\Omega}F^{\ast}(x,\rho u)dx,
$$
and, by Proposition \ref{provardif}, it is $C^1$ in $(0,+\infty)$. Moreover, since $\sigma'(\rho)=\zeta'((1/2)\rho^{2})\rho$, it follows that if $\rho>0$ is a critical point of $\sigma$, then $r=(1/2)\rho^{2}$ is a critical point of $\zeta$.

\medskip

In order to apply Theorem \ref{TME}, we will show that $\zeta$ has a critical point. Have this in mind, let $u_\ast$ be a positive function with $\Vert u_\ast\Vert = 1$. Then
$$
\frac{\sigma(\rho)}{\rho^2}
= \frac{1}{2} - \max_{u\in S_1}\int_{\Omega}\frac{F^{\ast}(x,\rho u)}{\rho^2}dx
\leq \frac{1}{2} - \int_{\Omega}\frac{F^{\ast}(x,\rho u_\ast)}{(\rho u_\ast)^2}u_\ast^2dx. 
$$
Employing $(f_3)$ and Fatou's Lemma, one gets
\begin{equation}\label{fatou1}
\limsup_{\rho\rightarrow 0^+}\frac{\sigma(\rho)}{\rho^2} \leq \frac{1}{2} -  \liminf_{\rho\rightarrow 0^+}\int_{\Omega}\frac{F^{\ast}(x,\rho u_\ast)}{(\rho u_\ast)^2}u_\ast^2dx
\leq \frac{1}{2} - \frac{1}{2}\int_{\Omega}\alpha_{\ast}(x)u_\ast^2dx.
\end{equation}
Now, we have two cases to consider. If $f(x, 0)>0$, then $\alpha_{\ast}=\infty$ and \eqref{fatou1} implies
$$
\limsup_{\rho\rightarrow 0^+}\frac{\sigma(\rho)}{\rho^2}=-\infty.
$$
By other side, if $f(x, 0)=0$ then $\alpha_{\ast}\in L^{\infty}(\Omega)$ and, in this case, again by $(f_3)$ and replacing $u_\ast$ by the positive eigenfunction $e_1$ of $\lambda_1(\alpha_\ast)$ such that $\Vert e_1\Vert = 1$, we get from \eqref{fatou1}
$$
\limsup_{\rho\rightarrow 0^+}\frac{\sigma(\rho)}{\rho^2} \leq\frac{1}{2}\left(1-\frac{1}{\lambda_{1}(\alpha_{\ast})}\right)<0.
$$
In any case, $\sigma$ is negative for $\rho$ small enough. As $\sigma(0)=0$, our next step is to prove that for each $\varepsilon>0$ there exists $\tilde{\rho}>0$ such that $\sigma(\rho)>-\varepsilon$ if $\rho>\tilde{\rho}$, because this is enough to conclude the existence of a critical point for $\sigma$. For that, let us consider a sequence $\{\rho_n\}\subset (0,\infty)$ with  $\rho_n\rightarrow \infty$, $\{v_n\}\subset H_{0}^{1}(\Omega)$, $\|v_{n}\|=1$ and $\{\rho_{n}v_{n}\}\subset \mathcal{G}_{\rho_{n}}$. Thus 
$$ 
\frac{\sigma(\rho_n)}{\rho_n^2}
= \frac{1}{2} - \int_{[v_n>0]}\frac{F^{\ast}(x,\rho_nv_n)}{(\rho_nv_n)^2}v_n^2dx, 
$$
that is,
\begin{equation}\label{vnposz}
\frac{\sigma(\rho_n)}{\rho_n^2} =
\frac{1}{2} - \int_{\Omega}\frac{F^{\ast}(x,\rho_nv_n)}{(\rho_nv_n)^2}v_n^2\chi_{[v_n > 0]}dx.
\end{equation}
Since $\lbrace v_n\rbrace$ is a bounded sequence with $v_{n}\geq 0$ (see Lemma \ref{lemaposit}), for some subsequence, we derive that there exists $v_0 \in H_{0}^{1}(\Omega)$, with $v_{0}\geq 0$ and $\|v_0\|\leq 1$, such that
$$
v_n\rightharpoonup v_0\hs\hs\mbox{in}\hs\hs H_0^1(\Omega), 
$$
$$
v_n\rightarrow v_0\hs\hs\mbox{in}\hs\hs L^2(\Omega),
$$
$$
v_n(x)\rightarrow v_0(x)\hs\hs\mbox{a.e.}\hs\hs\mbox{in}\hs\hs\Omega
$$
and there exists $h \in L^{2}(\Omega)$ such that
$$
|v_n| \leq h(x)\hs\hs\mbox{a.e.}\hs\hs\mbox{in}\hs\hs\Omega.
$$
Since $F^{\ast}$ is nonnegative, conditions $(f_1)$ and $(f_3)$ yield that there exists $K>0$ such that 
\begin{equation}\label{v0cz}
\left\vert\frac{F^{\ast}(x,t)}{t^2}\right\vert \leq K, \quad \forall t\in\R \quad \mbox{and a.e. in } \quad \Omega. 
\end{equation}
From (\ref{v0cz}), 
\begin{equation}\label{v0chz}
\left\vert\frac{F^{\ast}(x,\rho_nv_n(x))}{(\rho_nv_n(x))^2}v_n^2(x)\chi_{[v_{n}>0]}(x)\right\vert \leq |h(x)|^2K
\hs\hs\mbox{a.e.}\hs\hs\mbox{in}\hs\hs\Omega.
\end{equation}
If $v_0=0$, the compactness Sobolev embedding together with (\ref{v0cz}) gives
$$
\liminf_{n\rightarrow \infty}\frac{\sigma(\rho_n)}{\rho_n^2} = \frac{1}{2}>0.
$$
Since $F$ is positive, for the case $v_0 \not= 0$, observe that, 
$$
 \liminf_{n\rightarrow \infty}\frac{\sigma(\rho_n)}{\rho_n^2} \geq \frac{1}{2} -
\limsup_{n\rightarrow \infty}\int_{[v_{0}> 0]}\left[\frac{F^{\ast}(x,\rho_nv_n)}{(\rho_nv_n)^2}\right]\chi_{[v_{n}> 0]}(x)v_n^2dx
$$
%From \eqref{v0cz} and previous convergences, we get
%$$
% \liminf_{n\rightarrow \infty}\frac{\sigma(\rho_n)}{\rho_n^2} \geq \frac{1}{2} -
%\limsup_{n\rightarrow \infty}\int_{[v_{0}> 0]}\left[\frac{F^{\ast}(x,\rho_nv_n)}{(\rho_nv_n)^2}\chi_{[v_{n}> 0]}(x)-K\right]v_n^2dx-K\lim_{n\rightarrow \infty}\int_{\Omega}v_{0}^2dx
%$$
Since 
$$
\chi_{[v_{n}> 0]}(x)\to 1 \ \mbox{on $[v_0> 0]$},
$$ 
we employ (\ref{v0chz}) together with the version of Fatou's lemma for the upper limit to conclude that
$$
 \liminf_{n\rightarrow \infty}\frac{\sigma(\rho_n)}{\rho_n^2} \geq \frac{1}{2} - \frac{1}{2}\int_{\Omega}\eta(x)v_0^2dx.
$$
Thus, by $(f_3),$
\begin{equation}\label{fatou2*}
\liminf_{n\rightarrow \infty}\frac{\sigma(\rho_n)}{\rho_n^2}\geq \frac{1}{2}\left(1 - \frac{1}{\lambda_{1}(\eta)}\right)\|v_{0}\|^{2}\geq 0.
\end{equation}
Therefore, $\sigma$ has a critical point $\rho_{\ast} > 0$. Consequently, $r_{\ast}=(1/2)\rho_{\ast}^{2}$ is a critical point of $\zeta$ and, by Theorem \ref{TME}, there exists a maximum energy type point $u_{r_{\ast}}\in \mathcal{G}_{r_{\ast}}$ which is a nontrivial and nonnegative solution of problem (\ref{P2ast}). Since $f$ and $f^{\ast}$ coincide for nonnegative values, we conclude that $u_{r_{\ast}}$ is also a solution of problem \eqref{P2}.
\end{proof}

\section{A version of the Mountain Pass Theorem}\label{se:mountain}

In this section, we will provide a version of the mountain pass theorem by using the Method (MEF).

\begin{theorem}\label{TPM}
	Let $E$ be a reflexive Banach space and $J$ be a functional of class $(\mathcal{J})$ in $E$. Suppose that there exist $\alpha, \rho > 0$ and $w\in E$, with $w\in \mathcal{G}_{R}$, such that
	\begin{itemize}
		\item[$(H_1)$] $J(u) \geq \alpha > J(0) $ for all $u\in \partial \tilde{\mathcal{A}}_{\rho}$;
		\item[$(H_2)$] $J(w) < \alpha,$ with $R > \rho$.
	\end{itemize}
	
	Then there holds the equality below 
	\begin{equation}\label{levels} 
	c_{\ast} = \max_{r\in[0,R]}\min_{u \in \partial \tilde{\mathcal{A}}_r}J(u) \leq \inf_{\gamma\in\Gamma}\max_{t\in[0,1]}J(\gamma(t)) = c, \end{equation}
	where $\Gamma = \lbrace \gamma\in C([0,1],E); \gamma(0)=0, \gamma(1)=w\rbrace$. Moreover, $c_{\ast}>\max\{J(0),J(w)\}$ is a critical value of $J$.
\end{theorem}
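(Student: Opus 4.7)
The plan is to exploit the identity $\zeta(r)=\min_{u\in\partial\tilde{\mathcal{A}}_r}J(u)$ established in \eqref{energyfunction}, together with the differentiability result in Proposition \ref{provardif} and the correspondence between critical points of $\zeta$ and of $J$ proven in Theorem \ref{TME}. Thus the strategy is to identify $c_{\ast}$ with $\max_{r\in[0,R]}\zeta(r)$ and find a critical point of $\zeta$ located at an interior maximum.

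\textbf{Step 1 (the inequality $c_{\ast}\leq c$).} I would fix an arbitrary $\gamma\in\Gamma$ and note that $\Psi\circ\gamma$ is continuous with $\Psi(\gamma(0))=\Psi(0)=0$ by $(\Psi_{2})$ and $\Psi(\gamma(1))=\Psi(w)=R$ because $w\in\mathcal{G}_{R}\subset\partial\tilde{\mathcal{A}}_{R}$. Hence for each $r\in[0,R]$ the intermediate value theorem yields some $t_{r}\in[0,1]$ with $\gamma(t_{r})\in\partial\tilde{\mathcal{A}}_{r}$, so $\max_{t\in[0,1]}J(\gamma(t))\geq J(\gamma(t_{r}))\geq\zeta(r)$. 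Taking the supremum in $r$ and then the infimum in $\gamma$ yields $c_{\ast}\leq c$.

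\textbf{Step 2 (strict separation from the endpoints).} From $(H_{1})$ one has $\zeta(\rho)\geq\alpha>J(0)=\zeta(0)$, so $c_{\ast}\geq\zeta(\rho)\geq\alpha$. On the other hand, since $w\in\mathcal{G}_{R}\subset\partial\tilde{\mathcal{A}}_{R}$, $(H_{2})$ gives $\zeta(R)\leq J(w)<\alpha$. Combining the two, $c_{\ast}\geq\alpha>\max\{J(0),J(w)\}$.

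\textbf{Step 3 (critical value).} The inequalities $\zeta(0)<\alpha\leq\zeta(\rho)$ and $\zeta(R)<\alpha$ together with the continuity of $\zeta$ on $[0,\infty)$ (established before Proposition \ref{provardif}) show that the maximum $c_{\ast}$ is attained at some $r_{\ast}\in(0,R)$, strictly interior to the interval. Since $\zeta$ is $C^{1}$ on $(0,\infty)$ by Proposition \ref{provardif}, we have $\zeta'(r_{\ast})=0$. Applying Theorem \ref{TME}, any maximum-energy type point $u_{r_{\ast}}\in\mathcal{G}_{r_{\ast}}$ is a critical point of $J$; and by \eqref{energyfunction} one has $J(u_{r_{\ast}})=\zeta(r_{\ast})=c_{\ast}$, so $c_{\ast}$ is indeed a critical value.

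\textbf{Main obstacle.} The two nontrivial pieces are the inequality $c_{\ast}\leq c$ and the verification that the max of $\zeta$ on $[0,R]$ is truly interior. The first uses only the IVT applied to $\Psi\circ\gamma$, which is clean because $\Psi(w)=R$ comes for free from the assumption $w\in\mathcal{G}_{R}$. The second relies on the strict inequality $\zeta(R)\leq J(w)<\alpha\leq\zeta(\rho)$; here I would be careful to invoke $w\in\mathcal{G}_{R}$ (rather than merely $w\in\partial\tilde{\mathcal{A}}_{R}$) to obtain the bound $\zeta(R)\leq J(w)$. Once the interior maximum is secured, the conclusion that $c_{\ast}$ is a critical value is essentially automatic via Theorem \ref{TME}.
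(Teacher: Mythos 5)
Your proposal is correct and follows essentially the same route as the paper: you identify $c_{\ast}$ with $\max_{[0,R]}\zeta$, use $(H_1)$--$(H_2)$ to force an interior maximum $r_{\ast}\in(0,R)$ where $\zeta'(r_{\ast})=0$, invoke Theorem \ref{TME} to conclude $c_\ast$ is a critical value, and obtain $c_{\ast}\leq c$ by arguing that every path in $\Gamma$ must cross $\partial\tilde{\mathcal{A}}_{r}$. Your Step 1 is a mild refinement of the paper's argument — you apply the intermediate value theorem to $\Psi\circ\gamma$ to get the crossing for every $r\in[0,R]$ rather than only at $r_\ast$, which makes the inequality $c_\ast\leq c$ slightly more transparent; also note that $\zeta(R)\leq J(w)$ needs only $w\in\partial\tilde{\mathcal{A}}_R$, while $w\in\mathcal{G}_R$ is what guarantees $\Psi(w)=R$ and hence that the path genuinely exits $\tilde{\mathcal{A}}_{r_\ast}$.
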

\begin{proof} 
Since $J$ is a functional of class $(\mathcal{J})$, the energy function 
$$
\zeta(r) = \min_{u \in \partial \tilde{\mathcal{A}}_r} J(u) 
$$
is well-defined. By assumption $(H_1)$, 
$$
\zeta(\rho) = \min_{u \in \partial \tilde{\mathcal{A}}_\rho}J(u) > J(0) = \zeta(0). 
$$
On the other hand, from $(H_2)$,
$$
\zeta(R) = \min_{u \in \partial \tilde{\mathcal{A}}_R} J(u) \leq J(w) < \min_{u \in \partial \tilde{\mathcal{A}}_\rho}J(u) = \zeta(\rho).  
$$
So, due to the geometry of $\zeta$, there exists $r_{\ast}\in (0,R)$ such that
$$
\zeta(r_{\ast}) = \max_{r\in(0,R)}\zeta(r) = \max_{r\in[0,R]}\zeta(r)\geq \zeta(\rho)> \max\{J(0), J(w)\} \quad \mbox{and} \quad  \zeta'(r_{\ast})=0
$$
Hence, by Theorem \ref{TME}, the number $c_{\ast}$ given below 
$$ 
\max\{J(0), J(w)\} <c_{\ast} = \max_{r\in(0,R)}\zeta(r) = \max_{r\in[0,R]}\zeta(r)
= \max_{r\in[0,R]}\min_{u \in \partial \tilde{\mathcal{A}}_r}J(u). 
$$
is a critical value of $J$.

\medskip

Next, we will prove that
$$
c_{\ast} = \max_{r\in[0,R]}\min_{u \in \partial \tilde{\mathcal{A}}_r}J(u) \leq  \inf_{\gamma\in\Gamma}\max_{t\in[0,1]}J(\gamma(t)) = c. 
$$
Consider a path $\gamma\in\Gamma$ and the critical point $r_{\ast}\in(0,R)$ given above. Since $\gamma(0)=0\in \tilde{\mathcal{A}}_{r_{\ast}}$, $\gamma(1)=w\not\in \tilde{\mathcal{A}}_{r_{\ast}}$ (otherwise, we would have $J(w)\geq c_\ast$) and $\gamma([ 0,1])$ is connected, we have that $\gamma([0,1])\cap \partial \tilde{\mathcal{A}}_{r_{\ast}}$ is a nonempty set. So,
$$
 \max_{r\in[0,R]}\zeta(r) = \zeta(r_{\ast})=\min_{u \in \partial \tilde{\mathcal{A}}_{r_{\ast}}}J(u)
\leq \max_{t\in[0,1]}J(\gamma(t)),\,\mbox{for all}\,\gamma\in\Gamma, 
$$
from where it follows that 
$$
 \max_{r\in[0,R]}\zeta(r) \leq \inf_{\gamma\in\Gamma}\max_{t\in[0,1]}J(\gamma(t)) = c,
$$
or equivalently,
\begin{equation} \label{c*1}
c_{\ast} = \max_{r\in[0,R]}\min_{u \in \partial \tilde{\mathcal{A}}_{r}}J(u)
\leq \inf_{\gamma\in\Gamma}\max_{t\in[0,1]}J(\gamma(t)) = c. 
\end{equation}
\end{proof}

Next result gives a sufficient condition for the equality $c=c_\ast$.

\begin{corollary}\label{coincide}
Let $E$ be a reflexive Banach space and $J$ be a functional of class $(\mathcal{J})$ in $E$ satisfying assumptions $(H_1)-(H_2)$ given in Theorem \ref{TPM}. If the mountain pass level $c$ coincides with the infimum of $J$ on the Nehari manifold $\mathcal{N}$ associated to $J$, that is,
$$
c=\inf_{u\in \mathcal{N}}J(u), \ \mbox{where} \ \mathcal{N}=\{u\in E\backslash\{0\}: J'(u)u=0\},
$$
then $c_\ast=c=\inf_{u\in \mathcal{N}}J(u)$.

\end{corollary}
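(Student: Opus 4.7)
The plan is short: one inequality is already given by Theorem \ref{TPM}, so I only need to produce the reverse inequality $c_{\ast}\geq c$, and the hypothesis $c=\inf_{u\in\mathcal{N}}J(u)$ will reduce this to showing that the critical point produced by Theorem \ref{TPM} lies in $\mathcal{N}$.

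First, I would invoke Theorem \ref{TPM} directly to obtain the inequality
$$
c_{\ast}=\max_{r\in[0,R]}\min_{u\in\partial\tilde{\mathcal{A}}_{r}}J(u)\leq \inf_{\gamma\in\Gamma}\max_{t\in[0,1]}J(\gamma(t))=c,
$$
together with the fact that $c_{\ast}$ is a critical value of $J$. More precisely, inspecting the proof of Theorem \ref{TPM}, there exists $r_{\ast}\in(0,R)$ with $\zeta'(r_{\ast})=0$ and $\zeta(r_{\ast})=c_{\ast}$, and Theorem \ref{TME} then provides a maximum energy type point $u_{r_{\ast}}\in\mathcal{G}_{r_{\ast}}\subset \partial\tilde{\mathcal{A}}_{r_{\ast}}$ that is a critical point of $J$ with $J(u_{r_{\ast}})=c_{\ast}$.

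Next I would observe that $u_{r_{\ast}}\in\mathcal{N}$. Indeed, since $r_{\ast}>0$ and $\mathcal{G}_{r_{\ast}}\subset \partial\tilde{\mathcal{A}}_{r_{\ast}}$, the remark after Proposition \ref{pro1.5} guarantees $u_{r_{\ast}}\neq 0$, while $J'(u_{r_{\ast}})=0$ yields $J'(u_{r_{\ast}})u_{r_{\ast}}=0$; therefore $u_{r_{\ast}}\in\mathcal{N}$ by definition. Consequently
$$
c_{\ast}=J(u_{r_{\ast}})\geq \inf_{u\in\mathcal{N}}J(u)=c,
$$
where the last equality is the hypothesis. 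Combining this with the inequality $c_{\ast}\leq c$ of Theorem \ref{TPM} gives $c_{\ast}=c=\inf_{u\in\mathcal{N}}J(u)$.

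There is essentially no obstacle here: the substantive content is already packaged inside Theorems \ref{TME} and \ref{TPM}. The only point that requires a small amount of care is justifying membership of $u_{r_{\ast}}$ in $\mathcal{N}$, which reduces to noting that a maximum energy type point at a positive level is nontrivial (so that the Nehari constraint, a single scalar equation, is indeed satisfied in the sense $u\neq 0$ and $J'(u)u=0$).
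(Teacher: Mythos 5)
Your proposal is correct and follows essentially the same route as the paper: establish that the critical point realizing $c_{\ast}$ lies in $\mathcal{N}$, conclude $c_{\ast}\geq\inf_{\mathcal{N}}J=c$, and combine with the inequality $c_{\ast}\leq c$ from Theorem \ref{TPM}. The only cosmetic difference is how nontriviality is justified---you appeal to the structural fact that a maximum energy type point at level $r_{\ast}>0$ is nonzero, whereas the paper notes more directly that $J(u_{\ast})=c_{\ast}>J(0)$ already forces $u_{\ast}\neq 0$; both are valid.
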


\begin{proof}
Since $c_{\ast}$ is a critical value of $J$, from Theorem \ref{TPM}, there exists $u_\ast\in E$ such that $J(u_\ast)=c_\ast> J(0)$. Therefore, $u_\ast\neq 0$, and $u_\ast\in \mathcal{N}$. Showing that $c_\ast\geq \inf_{u\in \mathcal{N}}J(u)$. The result now follows from inequality \eqref{levels}.
\end{proof}

%\medskip
%
%Finally, to conclude the proof, we are going to prove that $c_{\ast}=c$.  For that, it remains to prove that $c_{\ast} \geq c$. {\color{blue}In fact, since $w = u_R \in \mathcal{G}_{R}$}, we can consider the path $\gamma_{\ast}:[0,1]\rightarrow E$ such that $\gamma_{\ast}(0) = 0$ and
%	$$ 
%	\gamma_{\ast}(t) = u_{tR},  \ 0<t\leq 1,
%	$$
%where $u_{tR}\in \mathcal{G}_{tR}$. By $(\Psi_5)$, it is possible to show that $\gamma_{\ast}\in\Gamma$. Thereby
%	$$ 
%	\inf_{\gamma\in\Gamma}\max_{t\in[0,1]}J(\gamma(t))
%	\leq \max_{t\in[0,1]}J(\gamma_{\ast}(t)) = \max_{t\in[0,1]}J(u_{tR})
%	= \max_{t\in[0,1]}\min_{u \in \partial \tilde{\mathcal{A}}_{tR}}J(u). 
%	$$
%	By taking $r = tR$, it follows that
%	\begin{equation} \label{c*2}
%		c = \inf_{\gamma\in\Gamma}\max_{t\in[0,1]}J(\gamma(t))
%		\leq \max_{t\in[0,1]}\min_{u \in \partial \tilde{\mathcal{A}}_{tR}}J(u)
%		= \max_{r\in[0,R]}\min_{u \in \partial \tilde{\mathcal{A}}_{r}}J(u) = c_{\ast}. 
%	\end{equation}
%Now the theorem follows from (\ref{c*1}) and (\ref{c*2}). 	
%\end{proof}

Situations where $c$ coincides with $\inf_{u\in \mathcal{N}}J(u)$ are well known in the literature and cover a large class of concrete problems in applications. In these particular cases, Corollary \ref{coincide} provides a new characterization for the mountain pass level constant $c$, then it can be seen as a maxmin level. 

\medskip

Another possible advantage to have a ``mountain pass theorem'' for maxmin critical levels is in the determination of ground state critical points for $J$, that is, a critical point $w\in \mathcal{N}$ satisfying $J(w)=\inf_{u\in \mathcal{N}}J(u)$. Next result gives a step in this direction.
 
 \medskip
 
 \begin{corollary}\label{ground}
Let $E$ be a reflexive Banach space and $J$ be a functional of class $(\mathcal{J})$ in $E$ satisfying assumptions $(H_1)-(H_2)$ given in Theorem \ref{TPM}. If, 
\begin{equation}\label{condi}
J(u)\geq J(tu), \ \mbox{for all} \ t>0 \ \mbox{and} \ u\in\mathcal{N},
\end{equation}
then $J$ has a ground state critical point.

\end{corollary}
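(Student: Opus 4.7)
The plan is to combine Theorem \ref{TPM} with the scaling hypothesis \eqref{condi} to show that the critical point produced by the maxmin level $c_{\ast}$ is itself a minimizer of $J$ on $\mathcal{N}$. The strategy essentially amounts to upgrading the proof of Corollary \ref{coincide}: instead of assuming $c=\inf_{\mathcal{N}} J$, I will deduce directly that $c_{\ast}=\inf_{\mathcal{N}} J$ from \eqref{condi}.

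First, by Theorem \ref{TPM} there exist $r_{\ast}\in(0,R)$ and a maximum energy type point $u_{\ast}\in\mathcal{G}_{r_{\ast}}$ with $\zeta'(r_{\ast})=0$ and $J(u_{\ast})=\zeta(r_{\ast})=c_{\ast}$; Theorem \ref{TME} ensures $u_{\ast}$ is a critical point of $J$. Since $r_{\ast}>0$, assumption $(\Psi_2)$ forces $u_{\ast}\neq 0$, hence $u_{\ast}\in\mathcal{N}$ and $\inf_{u\in\mathcal{N}}J(u)\leq c_{\ast}$.

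The substantive step is the reverse inequality. Fix an arbitrary $u\in\mathcal{N}$. By $(\Psi_4)$, for every $r>0$ there is a unique $t_{u}(r)>0$ such that $t_{u}(r)u\in\partial\tilde{\mathcal{A}}_{r}$, and consequently
\[
\zeta(r)=\min_{v\in\partial\tilde{\mathcal{A}}_{r}}J(v)\leq J\bigl(t_{u}(r)u\bigr)\leq J(u),
\]
where the last inequality applies \eqref{condi} with the admissible scaling parameter $t=t_{u}(r)>0$. Taking the maximum over $r\in[0,R]$ in the maxmin characterization from Theorem \ref{TPM}, this gives $c_{\ast}=\max_{r\in[0,R]}\zeta(r)\leq J(u)$. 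Since $u\in\mathcal{N}$ was arbitrary, $c_{\ast}\leq\inf_{u\in\mathcal{N}}J(u)$, and the two bounds combine to give $J(u_{\ast})=c_{\ast}=\inf_{u\in\mathcal{N}}J(u)$, so $u_{\ast}$ is a ground state critical point.

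I do not expect any serious obstacle here: the only subtlety is recognizing that $(\Psi_4)$ guarantees the radial projection $t_{u}(r)u$ onto every level set $\partial\tilde{\mathcal{A}}_{r}$ exists uniquely, so that \eqref{condi} can be applied simultaneously along the whole fiber $\{tu:t>0\}$ to control $\zeta(r)$ by $J(u)$ for every $r$. Once this observation is in place, the proof is just a reorganization of the $c_{\ast}\leq c$ argument from Theorem \ref{TPM}, replacing paths in $\Gamma$ by the one-parameter scaling family.
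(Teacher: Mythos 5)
Your proof is correct and follows essentially the same route as the paper: both directions hinge on using $(\Psi_4)$ to project $u\in\mathcal{N}$ onto $\partial\tilde{\mathcal{A}}_{r}$ and then applying \eqref{condi}. The only cosmetic difference is that you run the scaling estimate for every $r\in[0,R]$ and then take the maximum, whereas the paper applies it directly at the critical level $r_{\ast}$; both yield $c_{\ast}\leq\inf_{\mathcal{N}}J$ by the same mechanism.
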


\begin{proof}
As we have seen in the proof of previous corollary, 
\begin{equation}\label{indos0}
c_\ast\geq \inf_{u\in \mathcal{N}}J(u).
\end{equation} 
To conclude the proof, it is enough to prove the opposite inequality. From Theorem \ref{TPM}, there exists a critical point $u_\ast$ for $J$ satisfying $J(u_\ast)=c_\ast=\zeta(r_\ast)$. Let $u\in \mathcal{N}$. It follows from $(\Psi_4)$ that there exists $t_{u}(r_{\ast})>0$ such that $t_{u}(r_{\ast})u\in \partial\tilde{\mathcal{A}}_{r_\ast}$. Thus, by \eqref{condi},
\begin{equation}\label{indos}
J(u)\geq J(t_{u}(r_{\ast})u)\geq J(u_{\ast})=c_\ast.
\end{equation}
where the last inequality in \eqref{indos} is a consequence of $J(u_\ast)=\min_{u\in \partial\tilde{\mathcal{A}}_{r_\ast}}J(u)$. Since $u\in \mathcal{N}$ is arbitrary in \eqref{indos}, we conclude that
\begin{equation}\label{indos1}
\inf_{u\in \mathcal{N}}J(u)\geq c_\ast.
\end{equation}
The result follows now from \eqref{indos0} and \eqref{indos1}.

\end{proof}

%%%%%%%%%%%%%%%%%%%%%%%%%%%%%%%%%%%%%%%%%%%%%%%%%%%%%%%%%%%%%%%%%%%%%%%%%%%%%%%%%
%%%%%%%%%%%%%%%%%%%%%%%%%%%%%%%%%%%%%%%%%%%%%%%%%%%%%%%%%%%%%%%%%%%%%%%%%%%%%%%%%
%%%%%%%%%%%%%%%%%%%%%%%%%%%%%%%%%%%%%%%%%%%%%%%%%%%%%%%%%%%%%%%%%%%%%%%%%%%%%%%%%

\section{Some applications}\label{se:app}

\subsection{A general nonlinear problem}

In this section we are going to investigate the following problem
\begin{equation}\label{PAR}
\left\{ \begin{array}{lll}
\displaystyle -\Delta u = f(x,u) &\mbox{in}&\Omega, \\
\hspace{0.7cm}u = 0 &\mbox{on}& \partial\Omega, \end{array} \right.
\end{equation}
where $\Omega\subset\R^N (N\geq 2)$ is a smooth bounded domain and $f:\Omega\times\R \to \R$ is a Carath\'eodory function satisfying the growth condition $(f_1)$, introduced in Subsection \ref{subsection}. 

\medskip 

In \cite{AR}, Ambrosetti and Rabinowitz proved existence of nontrivial solution for \eqref{PAR} by considering the following conditions on $f$: 
\begin{itemize}
\item[$(F_2)$] $\displaystyle \lim_{t\to 0}\frac{f(x, t)}{|t|}=0$;

\medskip

\item[$(AR)$] there are constants $\nu>2$ and $r\geq 0$ such that
$$
0<\nu F(x, t)\leq tf(x, t), |t|\geq r,  \ F(x,t)=\int_{0}^{t}f(x,s)\,ds.
$$
\end{itemize}

\medskip

Condition (AR) is very important to provide some compactness for Palais-Smale sequences, which is a crucial point in the Mountain Pass Theorem presented in \cite{AR}. However, despite its technical relevance, hypothesis (AR) is quite restrictive and in the last years, many authors have concentrate efforts in the search for weaker conditions that still guarantee the existence of nontrivial solution for \eqref{PAR}. Observe that (AR) implies
\begin{equation}\label{implies}
F(x, t)\geq c_{3}|t|^{\nu}-c_{4}, \ x\in\Omega, \ |t|\geq r.
\end{equation} 

In 1994, Costa and Magalh\~aes \cite{CM} studied \eqref{PAR} under the following assumptions on $f$:
\begin{itemize}

\medskip

\item[$(F_1)_q$] $\displaystyle \limsup_{|t|\rightarrow\infty}\frac{2F(x, t)}{|t|^q}\leq b<\infty$ uniformly for a.e. $x$ in $\Omega$;

\medskip

\item[$(F_1)_\mu$] $\displaystyle \liminf_{|t|\rightarrow\infty}\frac{H(x, t)}{|t|^\mu}\geq a>0$ uniformly for a.e. $x$ in $\Omega$;

\medskip

\item[$(F_3)$] $\displaystyle \limsup_{t\rightarrow 0}\frac{2F(x, t)}{t^2}\leq \alpha<\lambda_1<\beta\leq \liminf_{|t|\rightarrow \infty}\frac{2F(x,t)}{t^2}$ uniformly for a.e. $x$ in $\Omega$,
\end{itemize}

\medskip

\noindent where $H(x, t)=f(x, s)s-2F(x,t)$, $\mu>(N/2)(q-2)$ and, in practice, $q$ is smaller than $p$ in $(f_1)$. Under hypotheses $(F_1)_q$, $(F_1)_\mu$ and $(F_3)$, the authors were able to prove the existence of nontrivial solution to \eqref{PAR}. Since $\mu>2$ in (AR), it follows from \eqref{implies}, that hypothesis (AR) implies $(F_1)_\mu$ (for instance, when $q\leq 2+(4/N)$) and the last inequality in $(F_3)$.

\medskip

In 2003, Willem and Zou \cite{W} also considered a more general version of \eqref{PAR} and were able to prove existence of nontrivial solution by replacing (AR). In fact, instead (AR), the authors considered the condition

\medskip

\begin{itemize}
\item[$(WZ)$] $tf(x, t)\geq 0, \ t\in\R \ \mbox{and} \ tf(x, t)\geq c_{0}|t|^{\nu}, \ |t|\geq r$,
\end{itemize}

\medskip

\noindent where $c_{0}>$, $r\geq 0$ and $\nu>2$. Hypothesis (WZ) is certainly weaker than (AR), but it still implies \eqref{implies}.

\medskip
 
In 2004, Schechter and Zou \cite{SZ} obtained an existence result for (\ref{PAR}), under conditions $(f_1), (F_2)$ and

\medskip

\begin{itemize}
\item[$(SZ1)$] $\displaystyle \lim_{t\to \infty}\frac{F(x,t)}{t^2}=\infty$ \ uniformly in \ $x$;

\medskip

\item[$(SZ2)$]  $\nu F(x, t)- tf(x,t)\leq C(t^{2}+1)$, \ $|t|\geq r$, \ or \ $H(x, s)$ is convex in $s$,
\end{itemize} 

\medskip

\noindent for some constants $\nu> 2$ and $r\geq 0$. Hypothesis (SZ1) is much more weak than (AR) and is known in the literature as superquadraticity's condition. As observed in \cite{MS}, the first part of (SZ2) is equivalent to (AR).

\medskip

In 2008, Miyagaki and Souto \cite{MS} solved (\ref{PAR}) by assuming $f(x, 0)=0$, $(f_1)$, $(F_2)$, $(SZ1)$ and the following  monotonicity's condition: 
\noindent There exists $t_0>0$ such that

\medskip

\begin{itemize}
\item[$(MS)$] $t\mapsto f(x,t)/t$ \ \mbox{is increasing in $(t_0,\infty)$ and decreasing in $(-\infty,-t_0)$, for all $x\in\Omega$.} 
\end{itemize}

\medskip

\noindent Note that (MS) is weaker than the second part of (SZ2).

\medskip

In the present work, we are going to treat problem \eqref{PAR} by supposing that $f$ satisfies conditions $(f_{1})$ and $(f_2)$, in Subsection \ref{subsection}, and 

\medskip

\begin{enumerate}
\item[$(f_3')$] \begin{eqnarray*}
\left\{ \begin{array}{lll}
\eta\in L^{s}(\Omega) \ \mbox{with $s>N/2$, $\alpha\in L^{\infty}(\Omega)$ and $\lambda_{1}(\eta)\leq 1<\lambda_{1}(\alpha)$} &\mbox{if}&  \eta\not\equiv\infty, \\
\hspace{0.7cm} \alpha\in L^{\infty}(\Omega) \mbox{ and $1\leq\lambda_{1}(\alpha)$} &\mbox{if}&  \eta\equiv \infty, \end{array} \right.
\end{eqnarray*}
with
\begin{equation}
\alpha(x)=\limsup_{t\rightarrow 0^+}2F(x,t)/t^{2} \ \mbox{and} \ \eta(x)=\liminf_{t\rightarrow\infty}2F(x,t)/t^2, \ \mbox{uniformly in $x$}.
\end{equation}
It is a consequence of $(f_3')$ that $f(x, 0)=0$. 

\end{enumerate}

\medskip

In order to compare our assumptions with other hypotheses previously mentioned, observe that $(f_3')$ is weaker than $(F_3)$ in \cite{CM} or $(F_2)$ and $(SZ1)$ in \cite{SZ}. Moreover, $(f_2)$ (or even $(f_2')$) and $(f_3')$ together cover functions which do not satisfy (WZ) in \cite{W} and $(MS)$ in \cite{MS}. Let us consider, for instance, the Carathe\'odory function $f:\Omega\times \R\to\R$, given by
$$ 
f(x,t) = a(x)\log(t+1)(2+ \cos t)t \ \mbox{if $t>0$, and $f(x, t)=0$ if $t\leq 0$.}
$$
for some $a\in L^{\infty}(\Omega)$ with $a>0$. We have that $\alpha(x)=0$ and $\eta(x)=\infty$, therefore, $f$ satisfies $(f_1)$, $(f_2)$ and $(f_3')$, but it does not satisfy $(WZ)$ and $(MS)$. Our main theorem in this section is the following.

\begin{theorem}
Under hypotheses $(f_1)$, $(f_2)$ and $(f_3')$, problem \eqref{PAR} has a nonnegative nontrivial solution.
\end{theorem}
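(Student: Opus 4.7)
The plan is to reduce the proof to an application of the maxmin mountain pass theorem, Theorem~\ref{TPM}. Following the strategy of Subsection~\ref{subsection}, I first replace $f$ by the truncation $f^{\ast}(x,t)=f(x,t)$ for $t\ge 0$ and $f^{\ast}(x,t)=0$ for $t<0$, and study the auxiliary problem~\eqref{P2ast}; by $(f_2)$ and Lemma~\ref{lemaposit}, every maximum energy type point of the associated functional $J=\Psi-\Phi$, with $\Psi(u)=\tfrac12\|u\|^2$ and $\Phi(u)=\int_\Omega F^{\ast}(x,u)\,dx$, is automatically nonnegative and hence solves~\eqref{PAR}. Checking that $J\in(\mathcal{J})$ on $H_{0}^{1}(\Omega)$ is the exact repetition of Lemmas~\ref{lemaphi1}--\ref{lemaphi3}: norm properties give $(\Psi_1)$--$(\Psi_4)$, uniform convexity of $H_{0}^{1}(\Omega)$ gives $(\Psi_5)$, and compact Sobolev embeddings combined with $(f_1)$ and $(f_2)$ give $(\Phi_1)$--$(\Phi_3)$; note that $(f_3')$ forces $f(x,0)=0$, so $f^{\ast}$ is continuous.

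The main work is the verification of the mountain pass geometry of $J$. For $(H_1)$, given $\varepsilon>0$, $(f_3')$ and $(f_1)$ furnish $C_\varepsilon>0$ with $F^{\ast}(x,t)\le\tfrac12(\alpha(x)+\varepsilon)t^2+C_\varepsilon|t|^p$ for every $(x,t)$; combining this with the variational characterisation $\int_\Omega\alpha u^2\,dx\le \|u\|^2/\lambda_1(\alpha)$ and the Sobolev embedding yields
$$J(u)\ \ge\ \tfrac12\Bigl(1-\tfrac{1}{\lambda_1(\alpha)}-\varepsilon C\Bigr)\|u\|^2-C_\varepsilon C'\|u\|^p.$$
Since $\lambda_1(\alpha)>1$, one can pick $\varepsilon$ so small that the quadratic coefficient is strictly positive, whence $J(u)\ge a_0>0$ on $\partial\tilde{\mathcal{A}}_\rho$ for all sufficiently small $\rho>0$. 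For $(H_2)$, let $e_1\ge 0$ be the first eigenfunction of~\eqref{Eigen} with weight $\eta$ normalised by $\|e_1\|=1$ (when $\eta\equiv\infty$, any fixed positive test function works). Since $F^{\ast}\ge 0$ and $\liminf_{t\to\infty}2F^{\ast}(x,t)/t^2=\eta(x)$ uniformly in $x$, Fatou's lemma yields
$$\liminf_{\bar\rho\to\infty}\frac{1}{\bar\rho^{\,2}}\int_\Omega F^{\ast}(x,\bar\rho\,e_1)\,dx\ \ge\ \tfrac12\int_\Omega\eta\,e_1^2\,dx\ =\ \tfrac{1}{2\lambda_1(\eta)},$$
which is $+\infty$ when $\eta\equiv\infty$; hence $\limsup_{\bar\rho\to\infty}J(\bar\rho\,e_1)/\bar\rho^{\,2}\le\tfrac12(1-1/\lambda_1(\eta))\le 0$. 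Picking $\bar\rho_0$ large enough that $J(\bar\rho_0 e_1)<a_0$ and $\bar\rho_0^{\,2}/2>\rho$, and setting $R:=\bar\rho_0^{\,2}/2$, one has $\bar\rho_0 e_1\in\partial\tilde{\mathcal{A}}_R$, so $\zeta(R)\le J(\bar\rho_0 e_1)<a_0$; every $w\in\mathcal{G}_R$ then satisfies $J(w)=\zeta(R)<a_0$, which is $(H_2)$.

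Theorem~\ref{TPM} now produces a critical value $c_\ast\ge a_0>0$ realised at a maximum energy type point $u_\ast\in\mathcal{G}_{r_\ast}$; by Lemma~\ref{lemaposit} $u_\ast\ge 0$ a.e., and since $f^{\ast}\equiv f$ on $[0,\infty)$, $u_\ast$ is the desired nontrivial nonnegative weak solution of~\eqref{PAR}. The principal obstacle I anticipate is the borderline case $\lambda_1(\eta)=1$ (or $\lambda_1(\alpha)=1$ in the $\eta\equiv\infty$ branch): the Fatou estimate then degenerates to $\limsup J(\bar\rho e_1)/\bar\rho^{\,2}\le 0$, so $J(\bar\rho_0 e_1)<a_0$ does not follow immediately, and similarly the quadratic bound near the origin becomes tight. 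Handling this requires extracting strict inequality in Fatou---for instance, by using $(f_2)$ to ensure that $2F^{\ast}(\cdot,t)/t^2$ strictly exceeds $\eta$ on a set of positive measure, with lower-order estimates pushing $J(\bar\rho_0 e_1)$ below $a_0$---and, in the $\eta\equiv\infty$ branch, exploiting the resulting superquadratic growth to get $\zeta(R)\to-\infty$ along $\bar\rho_0 e_1$ directly.
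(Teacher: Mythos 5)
Your strategy---truncate $f$ to $f^{\ast}$, check $J=\Psi-\Phi\in(\mathcal{J})$ by re-running Lemmas~\ref{lemaphi1}--\ref{lemaphi3}, verify $(H_1)$--$(H_2)$, apply Theorem~\ref{TPM}, and use Lemma~\ref{lemaposit} for positivity---coincides with the paper's, and your $(H_2)$ step (eigenfunction $e_1$ of $\lambda_1(\eta)$, Fatou, $\limsup_{r\to\infty}J(re_1)/r^2\le\tfrac12(1-1/\lambda_1(\eta))\le 0$, with $-\infty$ when $\eta\equiv\infty$) is the same as theirs. The genuinely different point is $(H_1)$. You derive the pointwise bound $F^{\ast}(x,t)\le\tfrac12(\alpha(x)+\varepsilon)t^2+C_\varepsilon|t|^p$ and obtain the classical global estimate $J(u)\ge C_1\|u\|^2-C_2\|u\|^p$ via the variational characterisation of $\lambda_1(\alpha)$ and Sobolev. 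The paper instead argues intrinsically through the energy function: it picks a maximizing sequence $\rho_n v_n\in\mathcal{G}_{\rho_n}$ with $\rho_n\to 0$, $\|v_n\|=1$, splits the integral over $[\rho_nv_n\le R]$ and $[\rho_nv_n>R]$, passes to the weak limit $v_0$ of $\{v_n\}$, and uses a reverse Fatou estimate on $\sigma(\rho_n)/\rho_n^2$, distinguishing the cases $v_0=0$ and $v_0\neq 0$. Your route is shorter and more elementary and bypasses the weak-limit analysis of the maximizers; the paper's is more in the spirit of (MEF), but both arrive at $\sigma(\rho)>0$ for small $\rho$ under the same hypothesis $\lambda_1(\alpha)>1$. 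The borderline cases you flag ($\lambda_1(\alpha)=1$ when $\eta\equiv\infty$, $\lambda_1(\eta)=1$ when $\eta\not\equiv\infty$, both admitted by the non-strict inequalities of $(f_3')$) are real concerns, but note that they are equally unaddressed in the paper's written proof: the terminal strict ``$>0$'' in the paper's $(H_1)$ chain and the assertion ``there exists $R>0$ sufficiently large with $J(Re_1)\le\underline{\alpha}<\alpha$'' in $(H_2)$ do not follow from the preceding $\ge 0$ and $\le 0$ when equality is attained, so the paper's argument would need precisely the kind of strict-Fatou or $(f_2)$-based refinement that you sketch.
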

\begin{proof}

As in Subsection \ref{subsection}, we consider the auxiliary problem
\begin{equation}\label{P3ast}
\left\{ \begin{array}{lll}
\displaystyle -\Delta u = f^{\ast}(x,u) &\mbox{in}&\Omega, \\
\hspace{0.7cm}u = 0 &\mbox{on}& \partial\Omega, \end{array} \right.
\end{equation}
where 
$$ 
f^{\ast}(x,t) = \left\{ \begin{array}{lll}
0, &\mbox{if}& t < 0, \\
f(x,t), &\mbox{if}& t \geq 0. \end{array} \right. 
$$

\medskip

Again, due to $(f_1)-(f_2)$, as in Subsection \ref{subsection}, $F^{\ast}$ is positive and nondecreasing for all $t>0$, functional $J$ associated with the problem (\ref{P3ast}) is well defined and is of class $(\mathcal{J})$ in $H_{0}^{1}(\Omega)$, where 
$$ J(u) = \Psi(u) - \Phi(u), $$
with
$$
\Psi(u) = \frac{1}{2}\Vert u\Vert^2\hspace{0.5cm}
\mbox{and}\hspace{0.5cm}\Phi(u) = \int_{\Omega}F^{\ast}(x,u)dx. 
$$

\medskip

It remains us to prove that $J$ satisfies $(H_1)$ and $(H_2)$ given in Theorem \ref{TPM}. Indeed, let us consider a sequence $\{\rho_n\}\subset (0,\infty)$ with  $\rho_n\rightarrow 0$ and $\{v_n\}\subset H_{0}^{1}(\Omega)$ with $\|v_{n}\|=1$ and $\{\rho_{n}v_{n}\}\subset \mathcal{G}_{\rho_{n}}$. Since $(f_2)$ holds, Lemma \ref{lemaposit} is still valid here. Thus 
\begin{equation}\label{vnposz2}
\frac{\sigma(\rho_n)}{\rho_n^2} = 
\frac{1}{2} - \int_{\Omega}\frac{F^{\ast}(x,\rho_nv_n)}{(\rho_nv_n)^2}v_n^2\chi_{[v_n > 0]}dx,
\end{equation}
and there exists $v_0 \in H_{0}^{1}(\Omega)$, with $v_{0}\geq 0$ and $\|v_0\|\leq 1$, such that
\begin{equation}\label{foi}
v_n\rightharpoonup v_0\hs\hs\mbox{in}\hs\hs H_0^1(\Omega), \ v_n\rightarrow v_0\hs\hs\mbox{in}\hs\hs L^2(\Omega) \ \mbox{and} \ v_n(x)\rightarrow v_0(x)\hs\hs\mbox{a.e.}\hs\hs\mbox{in}\hs\hs\Omega,
\end{equation}
and there exists $h \in L^{2}(\Omega)$ such that
\begin{equation}\label{foisim}
|v_n| \leq h(x)\hs\hs\mbox{a.e.}\hs\hs\mbox{in}\hs\hs\Omega.
\end{equation}
Fixing $R>0$, we have
\begin{equation}\label{split}
\frac{\sigma(\rho_n)}{\rho_n^2} = 
\frac{1}{2} - \int_{[\rho_{n}v_{n}\leq R]}\frac{F^{\ast}(x,\rho_nv_n)}{(\rho_nv_n)^2}v_n^2\chi_{[v_n > 0]}dx- \frac{1}{\rho^{2}_n}\int_{[\rho_{n}v_{n}> R]}F^{\ast}(x,\rho_nv_n)dx,
\end{equation}
It follows from $(f_1)$ that there exists $q\in (2, 2^\ast)$ such that $|F^{\ast}(x, t)|\leq C|t|^{q}$ for all $|t|\geq R$ and some $C>0$. So, from  $\rho_{n}\to 0$, \eqref{foi} and \eqref{foisim}, we conclude that
\begin{equation}
\frac{1}{\rho^{2}_n}\int_{[\rho_{n}v_{n}> R]}F^{\ast}(x,\rho_nv_n)dx\to 0.
\end{equation}

On the other hand, since $F^{\ast}$ is nonnegative, conditions $(f_1)$ and $(f_3')$ yield that there exists $K>0$ such that 
\begin{equation}\label{v0cz2}
\left\vert\frac{F^{\ast}(x,t)}{t^2}\right\vert \leq K, \quad \forall \ |t|\leq R \quad \mbox{and a.e. in } \quad \Omega.
\end{equation}
From (\ref{v0cz2}) and \eqref{foisim}, 
\begin{equation}\label{v0chz2} 
\left\vert\frac{F^{\ast}(x,\rho_nv_n(x))}{(\rho_nv_n(x))^2}v_n^2(x)\chi_{[v_{n}>0]}(x)\right\vert \leq |h(x)|^2K
\hs\hs\mbox{a.e.}\hs\hs\mbox{in}\hs\hs [\rho_nv_n(x)\leq R].
\end{equation}
If $v_0=0$, the compactness Sobolev embeeding together with \eqref{split} and (\ref{v0chz2}) gives
$$
\liminf_{n\rightarrow \infty}\frac{\sigma(\rho_n)}{\rho_n^2} = \frac{1}{2}>0.
$$
For the case $v_0 \not= 0$, we employ (\ref{v0chz2}) together with versions for the upper limit of Fatou's lemma and the convergence
$$
\chi_{[v_n>0]\cap[\rho_{n}v_{n}\leq R]}(x)\to 1, \ \mbox{a.e. in $[v_0>0]$}
$$
to get 
\begin{equation}\label{fatou2}
 \liminf_{n\rightarrow \infty}\frac{\sigma(\rho_n)}{\rho_n^2} \geq  \frac{1}{2} - \frac{1}{2}\int_{\Omega}\alpha(x)v_0^2dx.
\end{equation}
Thus, by $(f_3')$
$$
 \liminf_{n\rightarrow \infty}\frac{\sigma(\rho_n)}{\rho_n^2}\geq \frac{1}{2}\left(1 - \frac{1}{\lambda_{1}(\alpha)}\right)\|v_{0}\|^{2}> 0.
 $$
 Therefore, there exists $\rho$ small enough such that $\displaystyle \min_{u\in\mathcal{S}_{\rho}}J(u)=\sigma(\rho)>0$. Showing that $(H_1)$ holds.

\medskip

Now let $e_1\in S_1$ be the positive eigenfunction associated to $\lambda_1(\eta)$. Then, 
$$
\frac{J(r e_1)}{r^2} = \frac{1}{2} - \int_{\Omega}\frac{F^{\ast}(x,re_1)}{(re_1)^2}e_1^2dx. 
$$
Since $F^{\ast}$ is a positive function when $t\in(0,\infty)$, the Fatou's Lemma gives 
$$
\limsup_{r\rightarrow\infty}\frac{J(r e_1)}{r^2}
= \frac{1}{2} - \liminf_{r\rightarrow\infty}\int_{\Omega}\frac{F^{\ast}(x,re_1)}{(re_1)^2}e_1^2dx
\leq \frac{1}{2} - \frac{1}{2}\int_{\Omega}\eta(x)e_1^2dx.
$$
If $\eta\equiv\infty$, it follows from previous inequality that
$$
\limsup_{r\rightarrow\infty}\frac{J(r e_1)}{r^2}=-\infty.
$$ 
By other side, if $\eta\not\equiv\infty$, by $(f_3')$, we conclude that
$$
\limsup_{r\rightarrow\infty}\frac{J(r e_1)}{r^2} \leq 
\frac{1}{2}\left(1 - \frac{1}{\lambda_{1}(\eta)}\right)\leq 0. 
$$
In any case, given $\underline{\alpha} > 0$ with $\underline{\alpha} < \alpha$, there exists $R > 0$ sufficiently large such that
$$ J(R e_1) \leq \underline{\alpha} < \alpha, $$
proving $(H_2)$. Thereby, by Theorem \ref{TPM}, $J$ attains a critical value in
$$ c_{\ast} = \max_{r\in[0,R]}\min_{\Vert u\Vert = r}J(u). $$

In other words, there exists $r_{\ast}\in (0,R)$ such that $u_{r_{\ast}}$ is a nontrivial solution to the problem (\ref{P3ast}). Since $f$ and $f^{\ast}$ coincide for nonnegative values, we conclude that $u_{r_{\ast}}$ is also a solution to the problem (\ref{PAP}).

\end{proof}

%In case that $(f_2)$ is replaced by $(f_2')$, since constant $\nu$ can be taken less than $2$ and $f$ can assume negative values in some points, then we are still covering classes of functions which do not need to satisfy all hypotheses in \cite{CM}, \cite{W}, \cite{SZ} and \cite{MS}.
%
%\begin{theorem}
%Under hypotheses $(f_1)$, $(f_2')$ and $(f_3')$, problem \eqref{PAR} has a nonnegative mountain pass solution.
%\end{theorem}
%
%\begin{proof}
% it is enough to proceed as in the proof of Theorem \ref{T2} to show that $\Phi$ satisfies $(\Phi_2')$ and $(\Phi_3)$.
%\end{proof}

%{\color{blue}
%For the same reason of Theorem \ref{T2}, the following result holds:
%\begin{theorem}
%Under hypotheses $(f_1)$, $(f_2')$ and $(f_3')$, problem \eqref{PAR} has a nontrivial weak solution.
%\end{theorem}}

%%%%%%%%%%%%%%%%%%%%%%%%%%%%%%%%%%%%%%%%%%%%%%%%%%%%%%%%%%%%%%%%%%%%%%%%%%%%%%%%%
%%%%%%%%%%%%%%%%%%%%%%%%%%%%%%%%%%%%%%%%%%%%%%%%%%%%%%%%%%%%%%%%%%%%%%%%%%%%%%%%%
%%%%%%%%%%%%%%%%%%%%%%%%%%%%%%%%%%%%%%%%%%%%%%%%%%%%%%%%%%%%%%%%%%%%%%%%%%%%%%%%%

\subsection{A Berestycki-Lions type problem}\label{se:BL}

In 1983, Berestycki and Lions \cite{BL} were the first to solve the problem
\begin{equation}\label{PBL}
\left\{\begin{array}{lll}
\displaystyle -\Delta u = g(u)\hs\hs\mbox{in}\hs\hs\R^N, \\
\hspace{0.4cm}u \in H^1(\R^N), \end{array} \right.
\end{equation}
without using the Ambrosseti and Rabinowitz condition. However, the method used in \cite{BL} is not valid for non-autonomous problems, even though the function $g(x,u)$ is radially symmetric with respect to $x$.

\medskip

In 2009, Azzollini and Pomponio \cite{AP}, motivated by approach explored in Jeanjean \cite{Jj}, considered the problem above with $g(x,t)=f(t)-V(x)t$, which can be rewritten of the form
\begin{equation}\label{PAP}
\left\{\begin{array}{lll}
\displaystyle -\Delta u + V(x)u = f(u)\hs\hs\mbox{in}\hs\hs\R^N, \\
\hspace{0.4cm}u \in H^1(\R^N), \end{array} \right.
\end{equation}
assuming that $N\geq 1$ and $V\in C^1(\R^N,\R)$ satisfying the following conditions:
\begin{itemize}
\item[$(V_1')$] $V(x) = V(\vert x\vert)$;
\medskip
\item[$(V_2')$] $V(x)\geq 0$ for all $x\in\R^N$ and the inequality is strict somewhere;
\item[$(V_3')$] $\Vert (\nabla V(\cdot)|\cdot)^+\Vert_{N/2} < 2S$, where $\disp S = \inf_{u\in {D}^{1,2}(\R^N)\setminus\lbrace 0\rbrace}\frac{\vert\nabla u\vert_2^2}{\vert u\vert_{2 ^{\ast}}^2}$;
\item[$(V_4')$] $\disp\lim_{\vert x\vert\rightarrow\infty}V(x) = 0$.
\end{itemize}

In the present section, we intend to use Theorem \ref{TPM} to complement the main result found in \cite{AP}, in the sense we will only consider the following conditions on potential $V$. 
\begin{itemize}
\item[$(V_1)$] $V(x) = V(\vert x\vert)$;
\medskip
\item[$(V_2)$] There exist $V_0,V_{\infty} > 0$ such that $V_0 \leq V(x) \leq V_{\infty},\,\forall\, x\in\R^N$.
\end{itemize}
\medskip

By changing $(V_2')$--$(V_4')$ for $(V_2)$, it is possible to obtain functions that do not satisfy the conditions of \cite{AP}, but that satisfies our hypotheses, see for example the function
$$
V(x)=\frac{\vert x\vert + k_1}{\vert x\vert +k_2},\hs\hs\mbox{with}\hs\hs 0 < k_1 < k_2. 
$$

With respect to $f\in C(\R,\R)$, let us consider the following assumptions:
\begin{itemize}
\item[$(f_1)$]  
$$ 
\lim_{t\rightarrow 0}\frac{f(t)}{t} = 0\hspace{0.5cm}\mbox{and}\hspace{0.5cm}
\limsup_{\vert t\vert\rightarrow\infty}\frac{\vert f(t)\vert}{\vert t\vert^{p-1}}<\infty,
\,\,\mbox{where}\,\,2 < p < 2^{\ast}; 
$$
\item[$(f_2)$] There exists $t_0 > 0$ such that $F(t_0) - \frac{V_{\infty}}{2}t_0^2 > 0$;
\item[$(f_3)$] $f(t)> 0$ for all $t>0$.
\end{itemize}

In the sequel,  we will restrict the functional $J$ to the subspace $H^1_{rad}(\R^N)\subset H^1(\R^N)$, because by Strauss' Lemma \cite [Theorem 1.2, Chapter 6]{K} we have the compact embedding
\begin{equation}\label{imer2}
H^1_{rad}(\R^N)\hookrightarrow L^p(\R^N),\,\,\mbox{with}\,\, 2 < p < 2^{\ast}.
\end{equation}

As in the previous section, let us consider the auxiliary problem
\begin{equation}\label{P4ast}
\left\{ \begin{array}{lll}
\displaystyle -\Delta u + V(x)u = f^{\ast}(u) &\mbox{in}&\R^N, \\
\hspace{0.7cm}u \in H^1_{rad}(\R^N), \end{array} \right.
\end{equation}
where 
$$ 
f^{\ast}(t) = \left\{ \begin{array}{lll}
0, &\mbox{if}& t < 0, \\
f(t), &\mbox{if}& t \geq 0. \end{array} \right. 
$$
A simple computation gives that $f^{\ast}$ satisfies the same conditions as $f$.

Due to $(f_3)$, $F^{\ast}$ does not change sign in $(0,\infty)$. Then, from $(f_2)$, $F^{\ast}$ is positive and increasing for all $t>0$. The functional $J:H^1_{rad}(\R^N)\rightarrow\R$ is given by
$$ J(u) = \Psi(u) - \Phi(u), $$
where
$$
\Psi(u) = \frac{1}{2}\int_{\R^N}\left(\vert \nabla u\vert^2+V(x)\vert u\vert^2\right)dx
\hspace{0.5cm}\mbox{and}\hspace{0.5cm}\Phi(u) = \int_{\R^N}F^{\ast}(u)dx.
$$
Hereafter, we will denote by $\|\,\,\,\,\|_*$ the following norm in $H^{1}(\R^N)$
$$
\|u\|_*=\left(\int_{\R^N}(|\nabla u|^{2}+V(x)|u|^2)\,dx\right)^{\frac{1}{2}}.
$$
By condition $(V_2)$ it is easy to check that the norm above is equivalent the usual norm in $H^{1}(\R^N)$.

Next, we will show that $J$ belongs to class $(\mathcal{J})$.

\begin{lemma}
The functional $\Phi$ satisfies $(\Phi_1)$.
\end{lemma}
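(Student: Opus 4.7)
The plan is to verify both claims in $(\Phi_1)$ by proving a stronger fact: $\Phi$ and $u \mapsto \Phi'(u)u$ are weakly continuous on $H^1_{rad}(\R^N)$, after which the weak upper semicontinuity of the quotient follows from the weak lower semicontinuity of $u \mapsto \Psi'(u)u = \|u\|_*^2$. The main analytic tool is the Strauss compact embedding $H^1_{rad}(\R^N) \hookrightarrow L^p(\R^N)$ from \eqref{imer2}, combined with the subcritical estimate coming from $(f_1)$: for every $\varepsilon>0$ there is $C_\varepsilon>0$ with $|f^*(t)|\le \varepsilon|t|+C_\varepsilon|t|^{p-1}$ and $|F^*(t)|\le \tfrac{\varepsilon}{2}|t|^2+\tfrac{C_\varepsilon}{p}|t|^p$ for all $t\in\R$.

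Fix $u_n\rightharpoonup u$ in $H^1_{rad}(\R^N)$, so $\{u_n\}$ is bounded in $H^1$ (hence in $L^2$, by $(V_2)$) and $u_n\to u$ in $L^p$. By the mean value theorem applied to $F^*$,
\[
|F^*(u_n)-F^*(u)|\le \bigl(\varepsilon(|u_n|+|u|)+C_\varepsilon(|u_n|^{p-1}+|u|^{p-1})\bigr)|u_n-u|,
\]
so H\"older's inequality yields
\[
\int_{\R^N}|F^*(u_n)-F^*(u)|\,dx \le \varepsilon(\|u_n\|_2+\|u\|_2)\|u_n-u\|_2+C_\varepsilon(\|u_n\|_p^{p-1}+\|u\|_p^{p-1})\|u_n-u\|_p.
\]
The $L^p$-term vanishes as $n\to\infty$ by Strauss compactness, while the $L^2$-term stays bounded. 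Letting $\varepsilon\to 0$ gives $\Phi(u_n)\to\Phi(u)$, so $\Phi$ is weakly continuous.

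For the second piece I would show $\Phi'(u_n)u_n=\int_{\R^N}f^*(u_n)u_n\,dx\to \int_{\R^N}f^*(u)u\,dx=\Phi'(u)u$. Since the clean mean-value trick does not apply to $t\mapsto f^*(t)t$ without further regularity on $f$, I would pass to a subsequence with $u_n\to u$ a.e.\ and invoke Vitali's convergence theorem. The pointwise bound $|f^*(u_n)u_n|\le \varepsilon|u_n|^2+C_\varepsilon|u_n|^p$ supplies both uniform integrability and tightness: the first summand is made small, uniformly in $n$, by choosing $\varepsilon$ small relative to the $L^2$-bound on $\{u_n\}$, while the second inherits uniform integrability and tightness from the $L^1$-convergence $|u_n|^p\to|u|^p$ that follows from $u_n\to u$ in $L^p$.

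Finally, since $(f_3)$ together with the truncation defining $f^*$ yield $f^*(t)t\ge 0$ for every $t\in\R$, and since $\Psi'(u)u=\|u\|_*^2$ is weakly lower semicontinuous and strictly positive on $H^1_{rad}(\R^N)\setminus\{0\}$, the convergence just established implies
\[
\limsup_{n\to\infty}\frac{\Phi'(u_n)u_n}{\Psi'(u_n)u_n}\le \frac{\Phi'(u)u}{\liminf_n\|u_n\|_*^2}\le \frac{\Phi'(u)u}{\Psi'(u)u},
\]
which is the weak upper semicontinuity of the quotient. The delicacy of the argument lies not in any single inequality but in the coexistence of two distinct regimes of $f^*$: the linear behaviour near zero forces the $\varepsilon$-trick based on the $L^2$-bound, while the superlinear tail is tamed by the Strauss-type compactness in $L^p$; neither regime alone would suffice.
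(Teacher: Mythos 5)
Your proof is correct, but it takes a more elementary route than the paper's. Both arguments rest on the same two ingredients — the Strauss compact embedding $H^1_{rad}(\R^N)\hookrightarrow L^p(\R^N)$ and the subcritical bound $|f^{\ast}(t)|\le \varepsilon|t|+C_\varepsilon|t|^{p-1}$ coming from $(f_1)$ — but the paper packages the conclusion by citing the Berestycki--Lions compactness theorem \cite[Theorem A.I]{BL}: it sets $P(t)=F^{\ast}(t)$, $Q(t)=t^2+|t|^{2^{\ast}}$, verifies $P/Q\to 0$ both at $0$ and at infinity together with $\sup_n\int Q(u_n)\,dx<\infty$ along weakly convergent sequences, and invokes that theorem to get $\Phi(u_n)\to\Phi(u_0)$, asserting the same for $u\mapsto\Phi'(u)u$. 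You instead unpack the mechanism by hand: the mean value theorem plus H\"older splits the increment $\Phi(u_n)-\Phi(u)$ into an $L^2$-part (which does not vanish, since the embedding into $L^2(\R^N)$ is not compact, but is absorbed by letting $\varepsilon\to 0$ against the uniform $L^2$-bound) and an $L^p$-part killed by Strauss compactness; for $\Phi'(u)u$ you run Vitali's convergence theorem with the same two-regime decomposition of $|f^{\ast}(t)t|$, correctly noting that the $|u_n|^p$ piece inherits uniform integrability and tightness from the $L^p$-convergence while the $|u_n|^2$ piece is controlled by the $\varepsilon$-parameter alone. Your final step — passing from weak continuity and nonnegativity of $\Phi'(u)u$ to weak upper semicontinuity of the quotient via the weak lower semicontinuity and positivity of $\Psi'(u)u=\|u\|_{\ast}^2$ — is exactly the device the paper uses in the bounded-domain case (Lemma \ref{lemaphi1}) and works here too. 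The trade-off is that your argument is longer but self-contained, avoiding \cite[Theorem A.I]{BL} as a black box and making visible the division of labour between the near-zero linear regime and the superlinear tail, which the paper's citation silently handles.
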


\begin{proof}

By $(f_1)$, given $\varepsilon > 0$, there exists a constant $C=C(\varepsilon) > 0$ such that
$$
\vert f^{\ast}(t)\vert \leq \varepsilon\vert t\vert + C\vert t\vert^{p-1}, \quad \forall t \in \mathbb{R} 
$$
From this inequality, 
\begin{equation}\label{desF}
\vert F^{\ast}(t)\vert \leq \frac{\varepsilon}{2}\vert t\vert^2 + \frac{C}{p}\vert t\vert^{p}, \quad \forall t \in \mathbb{R},
\end{equation}
where $2 < p < 2^{\ast}$. Setting the functions $P,Q:\R\rightarrow\R$ by
$$
P(t)=F^{\ast}(t)\hs\hs\hs\mbox{and}\hs\hs\hs Q(t)=t^2+ \vert t\vert^{2 ^{\ast}}, 
$$
it follows that 
\begin{equation}\label{BL1}
\lim_{\vert t\vert\rightarrow\infty}\frac{P(t)}{Q(t)} = 0
\end{equation}
and
\begin{equation}\label{BL2}
\lim_{t\rightarrow 0}\frac{P(t)}{Q(t)} = 0.
\end{equation}
Finally, assuming that $u_n \rightharpoonup u_0$ in $H^1_{rad}(\R^N)$, we derive that
\begin{equation}\label{BL3}
\sup_{n\in\N} \int_{\R^N}\vert Q(u_n(x))\vert dx < \infty .
\end{equation}
Therefore,  (\ref{BL1}), (\ref{BL2}) and (\ref{BL3}) permit to apply \cite[Theorem A.I]{BL} to get
$$
\Phi(u_n) = \int_{\R^N}F^{\ast}(u_n)dx \rightarrow \int_{\R^N}F^{\ast}(u_0)dx = \Phi( u_0), 
$$
proving that $\Phi$ is weakly continuous. It is possible to obtain the same result for $u\mapsto\Phi'(u)u$.

\end{proof}

For the other hypotheses of $\Phi$, the procedure is similar to the previous application. Moreover, it is also true that, fixed $r>0$, if
$$ u_0 \in \mathcal{G}_r = \left\lbrace u\in \tilde{\mathcal{A}}_r;
\hspace{0.1cm}\Phi(u) = \max_{u\in\partial\tilde{\mathcal{A}}_r}\int_{\R^N}F^{\ast}(u)dx\right\rbrace, $$
then $u_0\geq 0$ a.e.$\hs$in $\R^N$.

\begin{theorem}
Under hypotheses $(V_1)$--$(V_2)$ and $(f_1)$--$(f_3)$, problem (\ref{PAP}) has a nontrivial weak solution.
\end{theorem}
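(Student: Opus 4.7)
The plan is to apply Theorem \ref{TPM} to the restricted functional $J:H^1_{rad}(\R^N)\to\R$ associated with the auxiliary problem \eqref{P4ast}, and then recover a solution of \eqref{PAP} via the Palais principle of symmetric criticality. First I would complete the verification that $J$ is of class $(\mathcal{J})$ in $H^1_{rad}(\R^N)$: the space is reflexive and uniformly convex under $\|\cdot\|_*$ (equivalent to the standard norm by $(V_2)$), so $\Psi(u)=\tfrac12\|u\|_*^2$ satisfies $(\Psi_1)$--$(\Psi_5)$ with $t_u(r)=\sqrt{2r}/\|u\|_*$. The lemma already proved gives $(\Phi_1)$, and $(\Phi_2)$--$(\Phi_3)$ follow the pattern of Lemmas \ref{lemaphi2}--\ref{lemaphi3}, using that $(f_1)$ forces $f(0)=0$ and $(f_3)$ gives $f>0$ on $(0,\infty)$, so that $f^{\ast}(t)=0$ exactly on $(-\infty,0]$. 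The argument of Lemma \ref{lemaposit} transfers verbatim because $F^{\ast}$ is strictly increasing on $[0,\infty)$: any element of $\mathcal{G}_r$ is nonnegative a.e.

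For the mountain-pass geometry, I would verify $(H_1)$ from the pointwise bound $|F^{\ast}(t)|\leq \tfrac{\varepsilon}{2}|t|^2+\tfrac{C}{p}|t|^p$ combined with the continuous embedding $H^1_{rad}(\R^N)\hookrightarrow L^p(\R^N)$ for $2<p<2^{\ast}$:
$$J(u)\;\geq\;\tfrac{1}{2}\|u\|_*^2-C_1\varepsilon\|u\|_*^2-C_2\|u\|_*^p,$$
so that, choosing $\varepsilon$ small and then $\rho$ small, $\alpha:=\zeta(\rho)=\min_{u\in\partial\tilde{\mathcal{A}}_{\rho}}J(u)>0=J(0)$.

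The more delicate step is $(H_2)$, where I would adapt the classical Berestycki--Lions test-function construction. Using $(f_2)$, fix $t_0>0$ with $F(t_0)-\tfrac{V_{\infty}}{2}t_0^2>0$ and, for each $\lambda>1$, consider the radial function
$$v_{\lambda}(x)=\begin{cases} t_0 & \text{if } |x|\leq \lambda,\\ t_0(\lambda+1-|x|) & \text{if } \lambda<|x|\leq \lambda+1,\\ 0 & \text{if }|x|>\lambda+1,\end{cases}$$
which lies in $H^1_{rad}(\R^N)$. A direct computation using $(V_2)$ gives constants $K_1,K_2,K_3>0$ independent of $\lambda$ such that
$$\int_{\R^N}F^{\ast}(v_{\lambda})\,dx\;\geq\;F(t_0)|B_{\lambda}|-K_1\lambda^{N-1},\qquad \|v_{\lambda}\|_*^2\;\leq\;V_{\infty}t_0^2|B_{\lambda}|+K_2\lambda^{N-1},$$
hence
$$J(v_{\lambda})\;\leq\;\Bigl(\tfrac{V_{\infty}}{2}t_0^2-F(t_0)\Bigr)|B_{\lambda}|+K_3\lambda^{N-1}\;\longrightarrow\;-\infty$$
as $\lambda\to\infty$. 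Setting $R=\tfrac12\|v_{\lambda}\|_*^2$ for $\lambda$ large enough, we have $R>\rho$, and the identity \eqref{energyfunction} yields, for any $w\in\mathcal{G}_R$,
$$J(w)\;=\;\min_{u\in\partial\tilde{\mathcal{A}}_R}J(u)\;\leq\;J(v_{\lambda})\;<\;0\;<\;\alpha,$$
so $(H_2)$ holds.

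Theorem \ref{TPM} then yields $r_{\ast}\in(0,R)$ which is a critical point of $\zeta$, and Theorem \ref{TME} produces a maximum energy type point $u_{r_{\ast}}\in\mathcal{G}_{r_{\ast}}$ that is a nontrivial critical point of $J$ on $H^1_{rad}(\R^N)$. By the radial analogue of Lemma \ref{lemaposit}, $u_{r_{\ast}}\geq 0$ a.e.; since both $V$ and $f$ are radial, the Palais principle of symmetric criticality lifts $u_{r_{\ast}}$ to a critical point of $J$ on $H^1(\R^N)$, and the coincidence $f\equiv f^{\ast}$ on $[0,\infty)$ finishes the proof. The principal obstacle I foresee is the asymptotic $J(v_{\lambda})\to-\infty$: the two bulk terms both scale like $|B_{\lambda}|$ and $(f_2)$ is precisely the condition that makes their difference negative, while the transition-layer contribution is only $O(\lambda^{N-1})$ and hence subdominant.
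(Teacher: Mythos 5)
Your proposal is correct and follows essentially the same strategy as the paper: verify $J\in(\mathcal{J})$ on $H^1_{rad}(\R^N)$, establish $(H_1)$ from the subcritical growth of $F^{\ast}$ and the Sobolev embedding, establish $(H_2)$ via a Berestycki--Lions type test function, invoke Theorems \ref{TPM} and \ref{TME}, and finish with the Palais principle of symmetric criticality. The only cosmetic difference is in $(H_2)$: you build the plateau function $v_{\lambda}$ directly and let $\lambda\to\infty$, whereas the paper first produces $\varphi\in C_0^{\infty}(\R^N)$ with $\int_{\R^N}F(\varphi)\,dx-\tfrac{V_{\infty}}{2}|\varphi|_2^2>0$ (which itself rests on a plateau-type construction) and then rescales $w(t,x)=\varphi(x/t)$; both routes isolate the $O(\lambda^N)$ bulk term, negative by $(f_2)$, against the subdominant $O(\lambda^{N-1})$ transition-layer contribution.
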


\begin{proof}

In what follows, we will prove that $J$ satisfies $(H_1)$ and $(H_2)$ in Theorem \ref{TPM}. Indeed, by (\ref{desF}), 
$$
J(u) \geq \Vert u\Vert_{\ast}^2 - \frac{\varepsilon}{2}\vert u\vert_2^2 - \frac{C}{p}\vert u\vert_p^p.
$$
Fixing $\varepsilon > 0$ small and using continuous Sobolev embeddings, we arrive at 
$$
J(u) \geq C_1\Vert u\Vert_{\ast}^2 - C_2\Vert u\Vert_{\ast}^p. 
$$
As $p>2$, for $\rho$ small enough, there exists $\alpha > 0$ such that
$$
J(u)\geq\alpha,\hs\mbox{for all}\hs u\in H^1_{rad}(\R^N)\hs\hs\mbox{with}\hs\hs \Vert u\Vert_{\ast}=\rho,  
$$
proving $(H_1)$. 

On the other hand, using $(f_2)$, note that there exists $\varphi\in C_0^{\infty}(\R^N)$ such that
\begin{equation}\label{F2BL}
 \int_{\R^N}F(\varphi)dx - \frac{V_{\infty}}{2}\vert \varphi\vert_2^2 > 0.
\end{equation}
By taking $\disp w(t, x)=\varphi\left(\frac{x}{t}\right)$ with $t>0$, we obtain
$$
\int_{\R^N}\vert\nabla w\vert^2dx =
\frac{1}{t^2}\int_{\R^N}\left\vert\nabla\varphi\left(\frac{x}{t}\right)\right\vert^2 dx
= t^{N-2}\int_{\R^N}\vert\nabla\varphi(x)\vert^2dx 
$$
and
$$ 
\int_{\R^N}V(x)\left\vert\varphi\left(\frac{x}{t}\right)\right\vert^2dx
= t^N\int_{\R^N}V(xt)\vert\varphi(x)\vert^2dx. 
$$
Fixing $R_t = \Vert w\Vert_{\ast}$, one gets
$$
R_t^2 = t^{N-2}\int_{\R^N}\vert\nabla\varphi(x)\vert^2dx
+ t^N\int_{\R^N}V(xt)\vert\varphi(x)\vert^2dx.  
$$
Consequently, 
$$
J(w) \leq \frac{t}{2}^{N-2}\int_{\R^N}\vert\nabla\varphi\vert^2dx
+ \frac{t}{2}^N\int_{\R^N}V_{\infty}\vert\varphi\vert^2dx - t^N\int_{\R^N}F(\varphi)dx, 
$$
that is, 
$$
J(w) \leq \frac{t}{2}^{N-2}\int_{\R^N}\vert\nabla\varphi\vert^2dx
- t^N\left(\int_{\R^N}F(\varphi)dx - \frac{V_{\infty}}{2}\int_{\R^N}\vert\varphi\vert^2dx\right). 
$$
Thus, by (\ref{F2BL}), $J(w) < 0$ for $t$ large enough. Note that $R_t\rightarrow\infty$ when $t\rightarrow\infty$, because we have the inequality below 
$$
t^{N-2}\vert\nabla\varphi\vert_2^2 + V_0t^N\vert\varphi\vert_2^2 \leq R_t^2 \leq t^{N-2}\vert\nabla\varphi\vert_2^2 + V_{\infty}t^N\vert\varphi\vert_2^2. 
$$
From this, $(H_2)$ also occurs and we can use the Theorem \ref{TPM} to conclude that $J$ attains a critical value in
$$
c_{\ast} = \max_{r\in[0,R]}\min_{\Vert u\Vert_{\ast} = r}J(u). 
$$

In other words, there exists $r_{\ast}\in (0,R)$ such that $u_{r_{\ast}}$ is a nontrivial critical point of $J$ in $H_{rad}^{1}(\R^N)$. However, we can apply the Palais Criticality Principle \cite{W} to guarantee that $u_{r_{\ast}}$ is also a critical point of $J$ in $H^1(\R^N)$ and, since that $f$ and $f^{\ast}$ coincide for nonnegative values, we deduce that $u_{r_{\ast}}$ is a solution to the problem (\ref{PAP}).

\end{proof}

%%%%%%%%%%%%%%%%%%%%%%%%%%%%%%%%%%%%%%%%%%%%%%%%%%%%%%%%%%%%%%%%%%%%%%%%%%%%%%%%%
%%%%%%%%%%%%%%%%%%%%%%%%%%%%%%%%%%%%%%%%%%%%%%%%%%%%%%%%%%%%%%%%%%%%%%%%%%%%%%%%%
%%%%%%%%%%%%%%%%%%%%%%%%%%%%%%%%%%%%%%%%%%%%%%%%%%%%%%%%%%%%%%%%%%%%%%%%%%%%%%%%%

%%%%%%%%%%%%%%%%%%%%%%%%%%%%%%%%%%%%%%%%%%%%%%%%%%%%%%%%%%%%%%%%%%%%%%%%%%%%%%%%%
%%%%%%%%%%%%%%%%%%%%%%%%%%%%%%%%%%%%%%%%%%%%%%%%%%%%%%%%%%%%%%%%%%%%%%%%%%%%%%%%%
%%%%%%%%%%%%%%%%%%%%%%%%%%%%%%%%%%%%%%%%%%%%%%%%%%%%%%%%%%%%%%%%%%%%%%%%%%%%%%%%%

\subsection{An anisotropic equation}\label{se:an}

Consider the problem
\begin{equation}\label{PAn}
\left\{ \begin{array}{lll}
\displaystyle -\sum_{i=1}^N\partial_i(\vert\partial_iu\vert^{p_i-2}\partial_iu)
= f(u) &\mbox{in}&\Omega,\\
\hspace{3.4cm}u = 0 &\mbox{on}& \partial\Omega, \end{array} \right.
\end{equation}
where $N \geq 2$, $p_i$'s are ordered , that is, $1<p_1\leq p_2\leq ...\leq p_N$ and $f:\R\rightarrow\R$ is a continuous function with $f(0)=0$. In 2009, Di Castro and Montefusco \cite{DM} solved the problem (\ref{PAn}) considering $f(u) = \lambda \vert u\vert^{q-2}u$, with $\lambda > 0$ and $p_1 < q < p_N$. Note that the function chosen by the authors already satisfies the Ambrosseti and Rabinowitz condition. So, our goal is to solve (\ref{PAn}) using a more general function.

\medskip

In our case, $f$ satisfies the following conditions:
\begin{itemize}
\item[$(f_1)$] There exists $C > 0$ such that
$$ \vert f(t)\vert \leq C\left(1 + \vert t\vert^{q-1}\right)\,\mbox{for all}\,t\in(0,\infty),\,\mbox{with}\,\,p_1 < q < p_N; $$
\item[$(f_2)$]
$$ 0 \leq \limsup_{t\rightarrow 0^+}\frac{F(t)}{\vert t\vert^{p_N}} <
\frac{\xi}{\Theta^{p_N}} \,\,\mbox{and}\,\,\,\,
\mathcal{S} \leq \liminf_{t\rightarrow\infty}\frac{F(t)}{\vert t\vert^{p_N}}, $$
where $F(x,t) = \int_0^t f(x,s)ds$ and  $\mathcal{S}, \Theta$ and $\xi$ are given in (\ref{imeran}), (\ref{S}) and (\ref{xi}) respectively.
\medskip
\item[$(f_3)$] $f(t)> 0$ for all $t>0$.
\end{itemize}

As function space we will consider the space $W_0^{1,\vec{p}}(\Omega)$ endowed with the norm
$$ 
\Vert u\Vert_{\vec{p}} = \sum_{i=1}^N\Vert\partial_iu\Vert_{p_i}, 
$$
where $\vec{p}=(p_{1}, p_{2}, \ldots, p_{N})$.

\medskip

Recall that $W_0^{1,\vec{p}}(\Omega)$ is a reflexive Banach space which is compactly embedded in $L^s(\Omega)$ for all $s\in[1,p_{\infty})$ (see \cite{Fra}), where 
$p_{\infty}=\max\lbrace p_N,\bar{p}^{\ast}\rbrace$, $\bar{p}=\displaystyle N/\left(\displaystyle\sum^{N}_{i=1}\frac{1}{p_{i}}\right)$ is the harmonic mean of the $p_i$'s and
$$
p^{\ast}:=
\displaystyle\frac{N}{\left(\displaystyle\sum^{N}_{i=1}\frac{1}{p_{i}}\right)-1}=\displaystyle\frac{N\overline{p}}{N-\overline{p}}.
$$
Moreover, we fix $\Theta, \xi>0$ satisfying  
\begin{equation}\label{imeran}
\vert u\vert_{p_N}\leq\Theta \Vert u\Vert_{\vec{p}}, \quad \forall u \in W_0^{1,\vec{p}}(\Omega)
\end{equation}
and
\begin{equation} \label{xi}
\xi \Vert u\Vert^{p_N}_{\vec{p}} \leq  \frac{1}{p_N}\sum_{i=1}^N\|\partial_i u\|^{p_N}_{p_i}, \quad \forall u \in W_0^{1,\vec{p}}(\Omega).
\end{equation}
The existence of such a constant $\Theta$ is guaranteed in \cite[Theorem 1]{Fra}.

\medskip

Hereafter, we assume that $p_N < \bar{p}^{\ast}$, and so, we have the compact embedding $W_0^{1,\vec{p}}(\Omega) \hookrightarrow L^{p_N}(\Omega)$. Moreover, we will denote by $\mathcal{S}>0$ the following constant
\begin{equation} \label{S}
\mathcal{S}=\min_{u \in W_0^{1,\vec{p}}(\Omega)}\left\{\sum_{i=1}^N\Vert\partial_iu\Vert^{p_i}_{p_i}\;:\;\|u^{+}\|_{p_N}=1\right\}.
\end{equation}

\begin{theorem}
Under hypotheses $(f_1)-(f_3)$, problem \eqref{PAn} has a nontrivial weak solution.
\end{theorem}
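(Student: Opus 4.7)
The approach parallels Subsections 2.1 and 4.1 applied to the anisotropic operator. First I would replace $f$ by the truncation $f^{\ast}(t)=f(t)$ for $t\geq 0$ and $f^{\ast}(t)=0$ for $t<0$, and analyse the associated functional $J=\Psi-\Phi$ on $E=W_0^{1,\vec p}(\Omega)$ with
\begin{equation*}
\Psi(u)=\sum_{i=1}^N\frac{1}{p_i}\int_\Omega|\partial_i u|^{p_i}\,dx,\qquad \Phi(u)=\int_\Omega F^{\ast}(u)\,dx.
\end{equation*}
By $(f_3)$, an anisotropic analogue of Lemma \ref{lemaposit} (using that $F^{\ast}(u)<F^{\ast}(|u|)$ on $[u<0]$ while $\Psi(|u|)=\Psi(u)$) shows that every maximum energy type point is nonnegative, so any solution of the auxiliary problem is a solution of (\ref{PAn}).

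The first task is to check $J\in(\mathcal{J})$. Strict convexity of $\Psi$, coercivity via (\ref{xi}) and weak lower semicontinuity yield $(\Psi_1)$; $(\Psi_2)$--$(\Psi_3)$ come directly from the definition, noting $\Psi'(u)u=\sum_i\Vert\partial_i u\Vert_{p_i}^{p_i}>0$ for $u\neq 0$; $(\Psi_4)$ holds because $t\mapsto\Psi(tu)=\sum_i t^{p_i}\Vert\partial_i u\Vert_{p_i}^{p_i}/p_i$ is strictly increasing from $0$ to $\infty$; and $(\Psi_5)$ follows from the uniform convexity of each $L^{p_i}$ through a componentwise Br\'ezis--Lieb/Radon type argument delivering $\partial_i u_n\to\partial_i u$ in $L^{p_i}$. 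For $\Phi$, I combine $(f_1)$ with the compact embeddings $W_0^{1,\vec p}\hookrightarrow L^s$ (for $s\in[1,p_\infty)$) and dominated convergence to obtain weak continuity of both $\Phi$ and $u\mapsto\Phi'(u)u$ -- hence $(\Phi_1)$; $(\Phi_2)$ is handled as in Lemma \ref{lemaphi2} using the strict positivity of $f$ on $(0,\infty)$; and $(\Phi_3)$ follows from taking $u_n=v/n$ with a positive $v\in W_0^{1,\vec p}$.

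Next I verify the geometry required by Theorem \ref{TPM}. For $(H_1)$, combining $(f_2)$ near $0$ with the subcritical bound from $(f_1)$, given $\varepsilon>0$ small there exist an exponent $s$ in the admissible subcritical range and a constant $C>0$ such that $F^{\ast}(t)\leq (\xi/\Theta^{p_N}-\varepsilon)|t|^{p_N}+C|t|^s$ for every $t$. Applying (\ref{imeran}) and (\ref{xi}), on $\partial\tilde{\mathcal{A}}_\rho$ one gets
\begin{equation*}
J(u)\geq \rho-\bigl(\xi/\Theta^{p_N}-\varepsilon\bigr)\Theta^{p_N}\Vert u\Vert_{\vec p}^{p_N}-C\Vert u\Vert_{\vec p}^s\geq \varepsilon\Theta^{p_N}\xi^{-1}\rho-C'\rho^{s/p_N},
\end{equation*}
which is positive for $\rho$ small since $s/p_N>1$. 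For $(H_2)$, pick $\varphi\geq 0$ with $|\varphi|_{p_N}=1$ nearly realising the infimum (\ref{S}), so that $\sum_i\Vert\partial_i\varphi\Vert_{p_i}^{p_i}\leq\mathcal{S}+\varepsilon'$. Since $p_i\leq p_N$, the terms with $p_i<p_N$ contribute $o(t^{p_N})$ as $t\to\infty$, hence
\begin{equation*}
\Psi(t\varphi)\leq \frac{t^{p_N}}{p_N}\sum_{i:\,p_i=p_N}\Vert\partial_i\varphi\Vert_{p_N}^{p_N}+o(t^{p_N})\leq \frac{\mathcal{S}+\varepsilon'}{p_N}\,t^{p_N}+o(t^{p_N}).
\end{equation*}
On the other hand $(f_2)$ at infinity gives $\int F^{\ast}(t\varphi)\,dx\geq (\mathcal{S}-\varepsilon)\,t^{p_N}-C$. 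Since $p_N>1$, $(\mathcal{S}+\varepsilon')/p_N<\mathcal{S}-\varepsilon$ for $\varepsilon,\varepsilon'$ small, so $J(t\varphi)\to -\infty$. For $t$ large enough, set $R=\Psi(t\varphi)>\rho$; by Proposition \ref{pro1.1}, $\mathcal{G}_R\neq\emptyset$, and any $w\in\mathcal{G}_R$ satisfies $J(w)\leq J(t\varphi)<\alpha$, giving $(H_2)$.

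Theorem \ref{TPM} then produces a critical value $c_\ast>0$ realised by a nontrivial $u_{r_\ast}\in\mathcal{G}_{r_\ast}$, which is nonnegative by the analogue of Lemma \ref{lemaposit} and is therefore a nontrivial weak solution of (\ref{PAn}). The main obstacle I anticipate is $(\Psi_5)$: although each $L^{p_i}$ is uniformly convex, the Br\'ezis--Lieb/Mazur argument has to be carried out componentwise for each partial derivative, and the resulting strong convergence in every $L^{p_i}$ must be reassembled into strong convergence in $W_0^{1,\vec p}$ via the norm $\Vert u\Vert_{\vec p}=\sum_i\Vert\partial_i u\Vert_{p_i}$; the control of the $o(t^{p_N})$ remainder in $(H_2)$ and the choice of the exponent $s$ in the right subcritical range for the $(H_1)$ estimate are further technical points but involve no new ideas.
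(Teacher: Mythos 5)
Your proposal follows essentially the same route as the paper: truncation of $f$, verification that $J\in(\mathcal J)$ (including the $(\Psi_5)$ argument via componentwise uniform convexity of the $L^{p_i}$ spaces), and application of Theorem~\ref{TPM}. The one small place where you are actually \emph{sharper} than the paper is the $(H_2)$ estimate: by keeping track of which $p_i$ equal $p_N$ you obtain $\Psi(t\varphi)\le\bigl(\mathcal S+\varepsilon'\bigr)t^{p_N}/p_N+o(t^{p_N})$, so the coefficient of $t^{p_N}$ in $J(t\varphi)$ is $\le\mathcal S/p_N-(\mathcal S-\varepsilon)<0$ (using $p_N>1$) and $J(t\varphi)\to-\infty$; the paper bounds the kinetic term only by $\mathcal S$, giving $\limsup J(rv)/r^{p_N}\le 0$, which by itself does not force $J(rv)$ below $\alpha$. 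Do be explicit in $(H_1)$ that the auxiliary exponent $s$ must be taken strictly larger than $p_N$ (e.g.\ $s=\bar p^{\ast}$, absorbing the $(f_1)$ growth), since your conclusion hinges on $s/p_N>1$.
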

\begin{proof}

We will first prove the existence of a solution to the auxiliary problem
\begin{equation}\label{PAnast}
\left\{ \begin{array}{lll}
\displaystyle -\sum_{i=1}^N\partial_i(\vert\partial_iu\vert^{p_i-2}\partial_iu)
= f^{\ast}(u) &\mbox{in}&\Omega,\\
\hspace{3.4cm}u = 0 &\mbox{on}& \partial\Omega, \end{array} \right.
\end{equation}
where $f^{\ast}$ is defined by
$$ f^{\ast}(t) = \left\{ \begin{array}{lll}
0, &\mbox{if}& t < 0, \\
f(t), &\mbox{if}& t \geq 0. \end{array} \right. $$

Due to $(f_3)$, $F^{\ast}$ does not change sign in $(0,\infty)$. Then, from $(f_2)$, $F^{\ast}$ is positive and increasing for all $t>0$. The functional $J:W_0^{1,\vec{p}}(\Omega)\rightarrow\R$ associated with the problem (\ref{PAnast}) is given by
$$ 
J(u) = \sum_{i=1}^N\frac{1}{p_i}\int_{\Omega}\vert\partial_iu\vert^{p_i}dx
- \int_{\Omega}F^{\ast}(u) dx. 
$$
In the sequel
$$
\Psi(u)= \sum_{i=1}^N\frac{1}{p_i}\int_{\Omega}\vert\partial_iu\vert^{p_i}dx \quad \mbox{and} \quad \Phi(u)=\int_{\Omega}F^{\ast}(u) dx.
$$

\begin{lemma} Hypothesis $(\Psi_5)$ is satisfied in $W_0^{1,\vec{p}}(\Omega)$.
	
\end{lemma}
\begin{proof}  Let $(u_n) \in W_0^{1,\vec{p}}(\Omega)$ with $u_n \rightharpoonup u$ and $\displaystyle \lim_{n \to +\infty}\Psi(u_n)=\Psi(u)$. Then, by the weak convergence,
$$	
\sum_{i=1}^N\frac{1}{p_i}\int_{\Omega}\vert\partial_iu\vert^{p_i}dx = \limsup_{n \to +\infty}\sum_{i=1}^N\frac{1}{p_i}\int_{\Omega}\vert\partial_iu_n\vert^{p_i}dx\geq \liminf_{n \to +\infty}\sum_{i=1}^N\frac{1}{p_i}\int_{\Omega}\vert\partial_iu_n\vert^{p_i}dx\geq\sum_{i=1}^N\frac{1}{p_i}\int_{\Omega}\vert\partial_iu\vert^{p_i}dx,
$$
hence, for a some subsequence, one has
$$	
\sum_{i=1}^N\frac{1}{p_i}\int_{\Omega}\vert\partial_iu\vert^{p_i}dx= \sum_{i=1}^N\frac{1}{p_i} \limsup_{n \to +\infty}\int_{\Omega}\vert\partial_iu_n\vert^{p_i}dx\geq \sum_{i=1}^N\frac{1}{p_i}\liminf_{n \to +\infty}\int_{\Omega}\vert\partial_iu_n\vert^{p_i}dx\geq\sum_{i=1}^N\frac{1}{p_i}\int_{\Omega}\vert\partial_iu\vert^{p_i}dx,
$$
If there is $i \in \{1,...,N\}$ such that
$$
\liminf_{n \to +\infty}\int_{\Omega}\vert\partial_iu_n\vert^{p_i}dx>\int_{\Omega}\vert\partial_iu\vert^{p_i}dx,
$$
or 
$$
\limsup_{n \to +\infty}\int_{\Omega}\vert\partial_iu_n\vert^{p_i}dx>\liminf_{n \to +\infty}\int_{\Omega}\vert\partial_iu\vert^{p_i}dx,
$$
we get a contradiction. From this, 
$$
\limsup_{n \to +\infty}\int_{\Omega}\vert\partial_iu_n\vert^{p_i}dx=\liminf_{n \to +\infty}\int_{\Omega}\vert\partial_iu_n\vert^{p_i}dx=\sum_{i=1}^N\frac{1}{p_i}\int_{\Omega}\vert\partial_iu\vert^{p_i}dx, \quad \forall i \in \{1,...N\}.
$$
Using the fact that $L^{p_i}(\Omega)$ is uniformly convex and that $\partial_iu_n \rightharpoonup \partial_iu$ in $L^{p_i}(\Omega)$, we derive that
$$
\partial_iu_n \to \partial_iu \quad \mbox{in} \quad L^{p_i}(\Omega), \quad \forall i \in \{1,...N\},
$$
showing that
$$
u_n \to u \quad \mbox{in} \quad  W_0^{1,\vec{p}}(\Omega),
$$
finishing the proof. 
	
\end{proof}

Similar to what was done in the previous sections, it is possible to prove that $J$ is of class $(\mathcal{J})$ and that the functions $u_r$ are nonnegative. It remains us to prove that $J$ satisfies $(H_1)$ and $(H_2)$ in Theorem \ref{TPM}. Indeed, by $(f_1)-(f_2)$, given $\epsilon \in (0, \frac{\xi}{\Theta^{p_N}})$ there exists $C>0$ such that
$$
F^{\ast}(t) \leq \epsilon |t|^{p_N}+C|t|^{\overline{p}^*}, \quad \forall t \in \R. 
$$
$$
J(u)=\sum_{i=1}^N\frac{1}{p_i}\int_{\Omega}\vert\partial_iu\vert^{p_i}dx - \int_{\Omega}F^{\ast}(u)dx
\geq \frac{1}{p_N}\sum_{i=1}^N\int_{\Omega}\vert\partial_iu\vert^{p_i}dx-\epsilon \Theta^{p_N}\Vert u\Vert_{\vec{p}}^{p_N}\,dx-C \Vert u\Vert_{\vec{p}} ^{\overline{p}^*}, 
$$
For $\Vert u\Vert_{\vec{p}} \leq 1$, we get
$$
J(u) \geq \frac{1}{p_N}\sum_{i=1}^N\|\partial_i u\|^{p_N}_{p_i}-\epsilon \Theta^{p_N}\Vert u\Vert_{\vec{p}}^{p_N}\,dx-C \Vert u\Vert_{\vec{p}} ^{\overline{p}^*} \geq \xi\Vert u\Vert_{\vec{p}}^{p_N}-\epsilon \Theta^{p_N}\Vert u\Vert_{\vec{p}}^{p_N}\,dx-C \Vert u\Vert_{\vec{p}} ^{\overline{p}^*} , 
$$
where $\xi$ was given in (\ref{xi}). From $(f_2)$,
$$
J(u) \geq C_1\Vert u\Vert_{\vec{p}}^{p_N}-C \Vert u\Vert_{\vec{p}} ^{\overline{p}^*}.
$$
As $\overline{p}^*>p_N$, for $\rho$ small enough, there exists $\alpha > 0$ such that
$$
J(u)\geq\alpha,\hs\mbox{for all}\hs u\in W_0^{1,\vec{p}}(\Omega)\hs\hs\mbox{with}\hs\hs \Vert u\Vert_{\ast}=\rho,  
$$
proving $(H_1)$.

\medskip

Now let $v\in W_{0}^{1,\vec{p}}(\Omega)$ such that 
$$
\mathcal{S}=\sum_{i=1}^N\Vert\partial_iu\Vert^{p_i}_{p_i} \quad \mbox{and} \quad \|v^+\|_{p_N}=1
$$
and $r>0$ such that $rv \in \partial \tilde{\mathcal{A}}_{R}$ . Then, $r \to +\infty$ if, and only if $R \to +\infty$. Thus, 
$$
\frac{J(rv)}{r^{p_N}} =
\sum_{i=1}^N\frac{r^{p_i}}{r^{p_N}p_i}\int_{\Omega}\vert\partial_i v\vert^{p_i}dx
-\int_{[v>0]}\frac{F^{\ast}(rv)}{\vert rv\vert^{p_N}}\vert v\vert^{p_N}dx. 
$$
Thus, for $r>1$, 
$$
\limsup_{r \to +\infty}\frac{J(r v)}{r^{p_N}} \leq \sum_{i=1}^N\int_{\Omega}\vert\partial_i v\vert^{p_i}dx
-\mathcal{S}\int_{[v>0]}\vert v\vert^{p_N}dx=\mathcal{S}-\mathcal{S}\leq 0. 
$$
So, given $\underline{\alpha} > 0$ with $\underline{\alpha} < \alpha$, there exists $r > 0$ sufficiently large such that
$$ J(r v) \leq \underline{\alpha} < \alpha, $$
proving $(H_2)$. 

Therefore, by Theorem \ref{TPM}, there exists $r_{\ast}\in (0,R)$ such that $u_{r_{\ast}}$ is a nontrivial solution to the problem (\ref{PAnast}). Since $f$ and $f^{\ast}$ coincide for nonnegative values, we conclude that $u_{r_{\ast}}$ is also a solution to the problem (\ref{PAn}).

\end{proof}

%%%%%%%%%%%%%%%%%%%%%%%%%%%%%%%%%%%%%%%%%%%%%%%%%%%%%%%%%%%%%%%%%%%%%%%%%%%%%%%%%  
%%%%%%%%%%%%%%%%%%%%%%%%%%%%%%%%%%%%%%%%%%%%%%%%%%%%%%%%%%%%%%%%%%%%%%%%%%%%%%%%%
%%%%%%%%%%%%%%%%%%%%%%%%%%%%%%%%%%%%%%%%%%%%%%%%%%%%%%%%%%%%%%%%%%%%%%%%%%%%%%%%%

\end{document}